\documentclass[11pt,a4paper,reqno]{amsart}

\usepackage[T1]{fontenc}
\usepackage[utf8]{inputenc}
\usepackage[british]{babel}
\usepackage{lmodern}
\usepackage{geometry}
\usepackage{amsmath,amssymb,mathtools,bm}
\usepackage{booktabs,array}
\usepackage{enumitem}
\usepackage{graphicx}
\usepackage{placeins}
\usepackage{microtype}
\usepackage[numbers,sort&compress]{natbib}
\usepackage[hidelinks]{hyperref}
\usepackage{xurl}

\hypersetup{
  pdftitle={Crouzeix--Raviart--Marini realisation of computable a priori L2-error bounds on anisotropic meshes},
  pdfauthor={Hiroki Ishizaka},
  pdfsubject={Computable finite element error bounds on anisotropic meshes},
  pdfkeywords={computable a priori error bound, anisotropic finite element method, Crouzeix--Raviart method, Raviart--Thomas flux, Marini relation, non-convex domain, graded mesh}
}

\numberwithin{equation}{section}

\theoremstyle{plain}
\newtheorem{theorem}{Theorem}[section]
\newtheorem{lemma}[theorem]{Lemma}
\newtheorem{proposition}[theorem]{Proposition}
\newtheorem{corollary}[theorem]{Corollary}

\theoremstyle{definition}

\newtheorem{assumption}[theorem]{Assumption}

\theoremstyle{remark}
\newtheorem{remark}[theorem]{Remark}

\newcommand{\R}{\mathbb{R}}

\newcommand{\cL}{\mathcal{L}}

\newcommand{\RT}{\mathrm{RT}}
\newcommand{\CR}{\mathrm{CR}}

\newcommand{\Pone}{\mathbb{P}^{1}}
\newcommand{\Pzero}{\mathbb{P}^{0}}
\newcommand{\divop}{\operatorname{div}}

\newcommand{\PiZero}{\Pi_h^0}
\newcommand{\gradH}{\nabla_h}

\title[CR--Marini realisation of computable $L^2$-error bounds]
{Crouzeix--Raviart--Marini realisation of computable a priori \texorpdfstring{$L^2$}{L2}-error bounds on anisotropic meshes}

\author{Hiroki Ishizaka}
\address{Team FEM, Matsuyama, Japan}
\email{h.ishizaka005@gmail.com}
\urladdr{https://teamfem.github.io/hiroki\_ishizaka/}

\date{}

\keywords{computable a priori error bound; anisotropic finite element method;
Crouzeix--Raviart method; Raviart--Thomas flux; Marini relation; non-convex
domain; graded mesh}

\subjclass[2020]{65N15, 65N30, 65N50}

\begin{document}

\begin{abstract}
We consider conforming $\Pone$ and lowest-order Crouzeix--Raviart (CR) approximations of the Dirichlet Poisson problem in two and three dimensions. We obtain computable a priori $L^2$-error bounds on anisotropic simplicial meshes without a global $H^2$-regularity assumption. For piecewise constant data, the Marini relation gives an equilibrated lowest-order Raviart--Thomas (RT) flux from the scalar CR solution. The square of the resulting Marini constant is the largest generalised eigenvalue of a problem involving only scalar conforming and CR matrices. Combined with the elementwise Poincar\'e constant, it controls both the conforming and CR errors. The CR $L^2$ estimate uses positivity of the discrete CR--conforming gap operator rather than a nonconforming Aubin--Nitsche argument. We also derive an exact decomposition of the Marini defect into the CR--conforming energy gap and an explicit geometric term with no aspect-ratio factor. On the algebraically graded L-shaped mesh family considered here, $q>3/2$ yields $\kappa_{M,h}=O(h)$ and computable $O(h^2)$ $L^2$-operator bounds.
\end{abstract}

\maketitle

\section{Introduction}\label{sec:introduction}
For conforming $\Pone$ elements, the usual Aubin--Nitsche estimate relies on elliptic regularity. This is inconvenient on non-convex polygons and polyhedra, where $L^2$ data need not produce an $H^2$ solution. Computable $L^2$ estimates that avoid this regularity assumption were developed in \citep{KinoshitaHashimotoNakao2009,LiuOishi2013,TakayasuLiuOishi2014}. Here, we treat the lowest-order CR--RT pair and use the Marini relation to evaluate the relevant constant through scalar finite element problems.

Let $d\in\{2,3\}$ and let $\mathbb T_h$ be a conforming simplicial mesh of $\Omega$. For $k\in\mathbb N_0:=\mathbb N\cup\{0\}$, let $\mathbb P^k(T)$ denote the polynomials on $T$ of total degree at most $k$, and we set
\begin{align*}
  \mathbb P^k(\mathbb T_h)
  :=\{v_h\in L^2(\Omega):v_h|_T\in\mathbb P^k(T)
       \ \forall T\in\mathbb T_h\},
  \quad
  X_h:=\Pzero(\mathbb T_h).
\end{align*}
For $g_h\in X_h$, let $z_h^{\CR}(g_h)$ be the auxiliary CR solution and we set
\begin{align}\label{eq:intro:Marini}
  \bm{\sigma}_h^M(g_h)|_T
  :=\gradH z_h^{\CR}(g_h)|_T
    -\frac{g_h|_T}{d}(\bm x-\bm x_T).
\end{align}
Here, $\bm x_T$ is the barycentre of $T$. Formula~\eqref{eq:intro:Marini} is the classical Marini relation in two dimensions \citep{Marini1985}; the three-dimensional version is given in \citet[Lemma~10]{IshizakaKobayashiTsuchiya2021}. For piecewise constant data, \eqref{eq:intro:Marini} replaces the mixed RT reconstruction in the hypercircle argument by a scalar CR solve. Related uses of the Marini relation in nonconforming and equilibrated-flux analysis can be found in \citep{Braess2009,LiuKikuchi2018,ErnVohralik2013}. In this paper it is used both for the a priori $L^2$ constant and for the CR error bound.

The regularity-free hypercircle estimate itself is not new, and neither is the Marini relation. The contribution here is the way these two ingredients are combined for the lowest-order CR--RT pair. We obtain an exact decomposition of the Marini defect into the CR--conforming energy gap and an explicit geometric term, characterise $\kappa_{M,h}^2$ by a scalar generalised eigenvalue problem, and derive the CR $L^2$ bound from positivity of the gap operator. The same representation is then used to study anisotropic and graded mesh families, where it also identifies which part of the defect controls a loss of rate.

With $z_h^c(g_h)$ the conforming auxiliary solution, we define
\begin{align}\label{eq:intro:kappaM}
  \kappa_{M,h}
  :=\sup_{0\ne g_h\in X_h}
  \frac{\|\bm{\sigma}_h^M(g_h)-\nabla z_h^c(g_h)\|_{L^2(\Omega)^d}}
       {\|g_h\|_{L^2(\Omega)}}.
\end{align}
Writing $g_T:=g_h|_T\in\R$ and $\mu_T:=\int_T|\bm x-\bm x_T|^2\,dx$, we prove the exact decomposition
\begin{align}\label{eq:intro:defect-decomp}
  \|\bm{\sigma}_h^M(g_h)-\nabla z_h^c(g_h)\|_{L^2(\Omega)^d}^2
  =|z_h^{\CR}(g_h)-z_h^c(g_h)|_{H^1(\mathbb T_h)}^2
   +\frac1{d^2}\sum_{T\in\mathbb T_h}\mu_T|g_T|^2.
\end{align}
To state the resulting finite-dimensional formula, let $\mathbf g$ denote the coefficient vector of $g_h$ with respect to the element-characteristic basis of $X_h$. We define the matrices $M_0$, $G_{\rm gap}$, and $D_M$ through the following quadratic-form identities:
\begin{align*}
  \mathbf g^\top M_0\mathbf g
  &:=\|g_h\|_{L^2(\Omega)}^2,\\
  \mathbf g^\top G_{\rm gap}\mathbf g
  &:=|z_h^{\CR}(g_h)-z_h^c(g_h)|_{H^1(\mathbb T_h)}^2,\\
  \mathbf g^\top D_M\mathbf g
  &:=\frac1{d^2}\sum_{T\in\mathbb T_h}\mu_T|g_T|^2.
\end{align*}
Then, \eqref{eq:intro:defect-decomp} gives the exact scalar spectral formula
\begin{align*}
  \kappa_{M,h}^2=\lambda_{\max}(G_{\rm gap}+D_M,M_0).
\end{align*}
No RT basis or mixed saddle-point system is needed for this eigenvalue problem: only the scalar CR and conforming matrices and the diagonal geometric term $D_M$ enter.

Let $C_0(T)$ be the optimal constant in the mean-zero Poincar\'e inequality on $T$, and set
\begin{align*}
  C_{0,h}:=\max_{T\in\mathbb T_h}C_0(T),
  \quad
  M_h:=\sqrt{C_{0,h}^2+\kappa_{M,h}^2}.
\end{align*}
For the Poisson solution $u$, its conforming approximation $u_h$, and its lowest-order CR approximation $u_h^{\CR}$, the same mesh-dependent quantity controls both $L^2$ errors:
\begin{align}\label{eq:intro:L2-main}
  \|u-u_h\|_{L^2(\Omega)}
  &\le M_h^2\|f\|_{L^2(\Omega)},\\
  \|u-u_h^{\CR}\|_{L^2(\Omega)}
  &\le M_h^2\|f\|_{L^2(\Omega)}.
\end{align}
Furthermore,
\begin{align*}
  |u-u_h^{\CR}|_{H^1(\mathbb T_h)}
  \le 2M_h\|f\|_{L^2(\Omega)}.
\end{align*}
If $h:=\max_{T\in\mathbb T_h}\operatorname{diam}(T)$, the Payne--Weinberger estimate $C_{0,h}\le h/\pi$ gives the fully computable replacement
\begin{align*}
  \widehat M_h:=\sqrt{(h/\pi)^2+\kappa_{M,h}^2}.
\end{align*}
Replacing $M_h$ by $\widehat M_h$ gives the explicit majorants used in Section~\ref{sec:numerics}. The proof of the CR $L^2$ estimate uses positivity of the discrete CR--conforming gap operator and does not use a nonconforming Aubin--Nitsche argument.

The CR--RT reconstruction also appears in the semilinear certification problem of \citep{Ishizaka2026Marini}, where it is used for an equilibrated Newton--Kantorovich residual. The present paper concerns instead the linear Poisson approximation constant and its conforming and CR consequences.

For mesh-family rates we use interpolation estimates written in physical directions. In two dimensions, the directional Lagrange and RT bounds of \citet{Ishizaka2025Interpolation} and \citet{Ishizaka2022RT}, together with the corner decomposition, give $\kappa_{M,h}=O(h)$ on the algebraically graded L-shaped family when $q>3/2$; the resulting computable $L^2$-operator bounds are $O(h^2)$. In three dimensions we only give an anisotropic comparison under local $W^{2,p}$ assumptions, with $p>2$ for the first-order $H^1$ Lagrange estimate. These assumptions are used for rates, not for the single-mesh construction or the defect identity. Sharp rates on general graded tetrahedral meshes with corner and edge singularities are left open.

Sections~\ref{sec:setting}--\ref{sec:matrix} contain the finite element setting, the CR--Marini reconstruction, the $L^2$ estimates, and the scalar eigenvalue formula. Anisotropic rates and corner grading are discussed in Section~\ref{sec:anisotropic}; the numerical results are in Section~\ref{sec:numerics}.

\section{Poisson problem and finite element setting}\label{sec:setting}
Let $d\in\{2,3\}$ and let $\Omega\subset\R^d$ be a bounded polyhedral Lipschitz domain. Throughout, $(\cdot,\cdot)$ denotes the $L^2(\Omega)$ inner product of scalar- or vector-valued functions, as appropriate, while $(\cdot,\cdot)_T$ denotes the corresponding inner product over an element $T\in\mathbb T_h$. We set
\begin{align*}
  V:=H_0^1(\Omega),
  \quad
  a(v,w):=(\nabla v,\nabla w)_{L^2(\Omega)^d},
  \quad
  |v|_{H^1(\Omega)} := \|\nabla v\|_{L^2(\Omega)^d}.
\end{align*}
For $f\in L^2(\Omega)$, let $u\in V$ be the unique weak solution of
\begin{align}\label{eq:poisson}
  a(u,v)=(f,v)_{L^2(\Omega)}
  \quad\forall v\in V.
\end{align}
Equivalently, $-\varDelta u=f$ in the distributional sense. In particular,
\begin{align}\label{eq:graphspace}
  u\in\{v\in H_0^1(\Omega):\varDelta v\in L^2(\Omega)\},
\end{align}
without any implication that $u\in H^2(\Omega)$.

Let $\mathbb{T}_h=\{T\}$ be a finite conforming simplicial triangulation of $\overline\Omega$ into non-degenerate closed $d$-simplices. Thus, $\overline\Omega=\bigcup_{T\in\mathbb T_h}T$, and the intersection of any two distinct elements is either empty or a common subsimplex. We set $h_T:=\operatorname{diam}(T)$ and $h:=\max_{T\in\mathbb T_h}h_T$. No minimum-angle, maximum-angle, or quasi-uniformity assumption is imposed. Let
\begin{align*}
  V_h^c:=\{v_h\in V: \ v_h|_T\in \Pone(T) \ \forall T\in \mathbb{T}_h\}
\end{align*}
be the conforming piecewise linear finite element space. The Galerkin solution $u_h\in V_h^c$ is defined as
\begin{align}\label{eq:P1}
  a(u_h,v_h)=(f,v_h)
  \quad\forall v_h\in V_h^c.
\end{align}
Let $P_h:V\to V_h^c$ denote the Ritz projection. Then, $u_h=P_hu$ and
\begin{align}\label{eq:galerkin-orthogonality}
  a(u-u_h,v_h)=0\quad\forall v_h\in V_h^c.
\end{align}

Recall that $X_h$ denotes the space of elementwise constant functions. We define the $L^2$-orthogonal projection $\PiZero:L^2(\Omega)\to X_h$ as
\begin{align*}
  (\PiZero g)|_T=: \Pi_T^0(g|_T)
  :=\frac1{|T|_d}\int_Tg\,dx
  \quad\forall T\in\mathbb T_h.
\end{align*}
Here, $|\cdot|_d$ denotes the $d$-dimensional Hausdorff measure. For each simplex $T$, we define the optimal mean-zero Poincar\'e constant as
\begin{align}\label{eq:C0T}
  C_0(T)
  :=\sup_{\substack{v\in H^1(T),\ \int_Tv=0\\v\ne0}}
       \frac{\|v\|_{L^2(T)}}{\|\nabla v\|_{L^2(T)^d}},
  \quad
  C_{0,h}:=\max_{T\in\mathbb{T}_h}C_0(T).
\end{align}
Because any simplex is convex, the Payne--Weinberger inequality \citep{PayneWeinberger1960} yields
\begin{align}\label{eq:C0h-diam}
  C_0(T)\le\frac{h_T}{\pi},
  \quad
  C_{0,h}\le\frac{h}{\pi}.
\end{align}
The convexity used in \eqref{eq:C0h-diam} is only the local convexity of each simplex and places no convexity assumption on $\Omega$. Although sharper simplex-specific bounds are available \citep{LaugesenSiudeja2010}, Section~\ref{sec:numerics} uses $h/\pi$ for simplicity and reproducibility.

We equip $H^{-1}(\Omega)=V^*$ with the energy-dual norm
\begin{align*}
  \|\ell\|_{H^{-1}(\Omega)}
  :=\sup_{0\ne v\in V}
  \frac{|\langle\ell,v\rangle|}{|v|_{H^1(\Omega)}}.
\end{align*}

\begin{lemma}[Elementwise projection in the dual norm]\label{lem:osc}
For any $g\in L^2(\Omega)$,
\begin{align}\label{eq:osc-Hminus1}
  \|g-\PiZero g\|_{H^{-1}(\Omega)}
  \le C_{0,h}\|g-\PiZero g\|_{L^2(\Omega)}.
\end{align}
\end{lemma}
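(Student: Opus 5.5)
We argue by duality, working directly from the definition of the energy-dual norm. Set $r:=g-\PiZero g\in L^2(\Omega)$ and fix an arbitrary $0\ne v\in V$. Since $r$ is in $L^2$, the pairing is just an integral, $\langle r,v\rangle=(r,v)_{L^2(\Omega)}$, and we split it elementwise:
\begin{align*}
  (r,v)=\sum_{T\in\mathbb T_h}(r,v)_T .
\end{align*}
The key observation is that $r$ has zero mean on every element, because $\int_T(g-\Pi_T^0 g)\,dx=0$ by the definition of $\PiZero$. Hence on each $T$ we may subtract the constant $\Pi_T^0 v$ from $v$ at no cost, which gives
\begin{align*}
  (r,v)_T=(r,v-\Pi_T^0 v)_T .
\end{align*}

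Next, since $v\in H_0^1(\Omega)$, the restriction $v|_T$ lies in $H^1(T)$, and $v-\Pi_T^0v$ has zero mean on $T$. The definition \eqref{eq:C0T} of $C_0(T)$ therefore yields
\begin{align*}
  \|v-\Pi_T^0v\|_{L^2(T)}\le C_0(T)\,\|\nabla v\|_{L^2(T)^d}\le C_{0,h}\,\|\nabla v\|_{L^2(T)^d}.
\end{align*}
We combine this with the Cauchy--Schwarz inequality on each element, and then with the discrete Cauchy--Schwarz inequality over the sum:
\begin{align*}
  |(r,v)|\le C_{0,h}\sum_{T}\|r\|_{L^2(T)}\|\nabla v\|_{L^2(T)^d}
  \le C_{0,h}\|r\|_{L^2(\Omega)}\,|v|_{H^1(\Omega)}.
\end{align*}
Here we used that $v\in H^1(\Omega)$, so its elementwise gradients assemble to the global $\nabla v$ and $\sum_T\|\nabla v\|_{L^2(T)^d}^2=|v|_{H^1(\Omega)}^2$.

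Dividing by $|v|_{H^1(\Omega)}$ and taking the supremum over $0\ne v\in V$ gives \eqref{eq:osc-Hminus1}.

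There is no genuine obstacle in this argument. The one point that needs care is justifying that $C_0(T)$ may be applied to $v|_T-\Pi_T^0v$. This holds because that function belongs to the admissible class in \eqref{eq:C0T}: it is in $H^1(T)$ and has zero mean. Note that neither the boundary condition nor any global regularity of $\Omega$ is needed for this step.
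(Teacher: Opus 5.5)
Your proposal is correct and follows the paper's proof: you subtract the element means $\Pi_T^0 v$ using the zero-mean property of $g-\PiZero g$, apply the definition of $C_0(T)$ on each element, use Cauchy--Schwarz elementwise and then over the mesh, and take the supremum over $v\in V$. The care you take to check that $v|_T-\Pi_T^0 v$ belongs to the admissible class in \eqref{eq:C0T} is exactly the point the paper relies on.
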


\begin{proof}
Let $v\in V$.  Since $g-\PiZero g$ has zero mean on any element,
\begin{align*}
  (g-\PiZero g,v)
  =\sum_{T\in\mathbb{T}_h}(g-\PiZero g,v-\Pi_T^0v)_T.
\end{align*}
The definition of $C_0(T)$, followed by Cauchy--Schwarz over the mesh, gives
\begin{align*}
  |(g-\PiZero g,v)|
  \le C_{0,h}\|g-\PiZero g\|_{L^2(\Omega)}|v|_{H^1(\Omega)}.
\end{align*}
Taking the supremum over $|v|_{H^1(\Omega)}=1$ proves the result.
\end{proof}

\section{CR--Marini equilibrated fluxes}\label{sec:marini}
Set
\begin{align*}
H^1(\mathbb{T}_h)
&:= \left\{ \varphi \in L^2(\Omega):
\ \varphi|_{T} \in H^1(T) \ \forall T \in \mathbb{T}_h \right\}.
\end{align*}
For any $v \in H^1(\mathbb{T}_h)$, we write $\nabla_h v$ for the broken gradient, defined by $(\nabla_h v)|_T := \nabla(v|_T)$ for each element $T\in\mathbb T_h$. If $v\in H^1(\Omega)$, then $\nabla_h v=\nabla v$ a.e.\ in $\Omega$.
For $v\in H^1(\mathbb T_h)$, we use the broken seminorm
\begin{align*}
  |v|_{H^1(\mathbb T_h)}
  :=\|\nabla_h v\|_{L^2(\Omega)^d}.
\end{align*}

Let $\mathcal{F}_h^i$ and $\mathcal{F}_h^{\partial}$ denote the sets of interior and boundary faces, respectively, and set $\mathcal{F}_h:=\mathcal{F}_h^i\cup\mathcal{F}_h^{\partial}$. For each $F=\partial T_+\cap\partial T_-\in\mathcal F_h^i$, choose a unit normal $\bm n_F$ pointing from $T_+$ to $T_-$. For $\varphi\in H^1(\mathbb T_h)$, we define
\begin{align*}
  [\![\varphi]\!]_F
  :=\varphi|_{T_+}-\varphi|_{T_-}.
\end{align*}
If $F=\partial T\cap\partial\Omega$, set $[\![\varphi]\!]_F:=\varphi|_T$. For $\bm v\in H^1(\mathbb T_h)^d$ and $F\in\mathcal F_h^i$, we define the normal jump as
\begin{align*}
  [\![\bm v\cdot\bm n]\!]_F
  :=\bm v|_{T_+}\cdot\bm n_F-\bm v|_{T_-}\cdot\bm n_F.
\end{align*}

Let $V_{h,0}^{\CR}$ denote the lowest-order CR space \citep{CrouzeixRaviart1973} with vanishing boundary face means. Thus,
\begin{align*}
\displaystyle
V_{h,0}^{\CR} &:=  \left \{ \varphi_h \in \mathbb{P}^1(\mathbb{T}_h): \  \int_F [\![ \varphi_h ]\!] ds = 0 \ \forall F \in \mathcal{F}_h \right \}.
\end{align*}
For $g_h\in X_h$, let $z_h^{\CR}(g_h)\in V_{h,0}^{\CR}$ solve
\begin{align}\label{eq:CR-rhs}
  (\gradH z_h^{\CR}(g_h),\gradH v_h)=(g_h,v_h)
  \quad\forall v_h\in V_{h,0}^{\CR}.
\end{align}
The broken seminorm $|\cdot|_{H^1(\mathbb T_h)}$ is a norm on $V_{h,0}^{\CR}$: a function with vanishing broken gradient is elementwise constant, the interior face-mean conditions identify these constants across the connected mesh, and the boundary face-mean conditions force the common constant to vanish. Therefore, \eqref{eq:CR-rhs} has a unique solution for any $g_h\in X_h$. For $T \in \mathbb{T}_h$, the local RT polynomial space is defined as
\begin{align*}
\displaystyle
\RT^0(T)
:=\{\bm a+b\bm x:\ \bm a\in\mathbb R^d,\ b\in\mathbb R\}
=\mathbb{P}^0(T)^d+\bm x\mathbb{P}^0(T).
\end{align*}
The globally normal-continuous lowest-order RT finite element space \citep{RaviartThomas1977} is defined as follows:
\begin{align*}
\displaystyle
V_h^{\RT} &:= \{ \bm v_h \in L^2(\Omega)^d: \  \bm v_h|_T \in \RT^0(T), \ \forall T \in \mathbb{T}_h, \  [\![ \bm v_h \cdot \bm n ]\!]_F = 0, \ \forall F \in \mathcal{F}_h^i \}.
\end{align*}
The normal-continuity condition implies $V_h^{\RT}\subset H(\divop;\Omega)$. Let $\bm x_T$ denote the barycentre of $T$. We define the Marini-type flux $\bm{\sigma}_h^M(g_h)$ elementwise as
\begin{align}\label{eq:Marini-flux}
  \bm{\sigma}_h^M(g_h)|_T
  :=\gradH z_h^{\CR}(g_h)|_T
    -\frac{g_h|_T}{d}(\bm x-\bm x_T).
\end{align}

In two dimensions, \eqref{eq:Marini-flux} is the classical Marini relation \citep{Marini1985}; the three-dimensional version is given in \citet[Lemma~10]{IshizakaKobayashiTsuchiya2021}. We include a short proof of the conformity and divergence properties for $d\in\{2,3\}$.

\begin{proposition}[Marini reconstruction]\label{prop:Marini}
For any $g_h\in X_h$,
\begin{align}\label{eq:Marini-properties}
  \bm{\sigma}_h^M(g_h)\in V_h^{\RT}\subset H(\divop;\Omega),
  \quad
  \divop\bm{\sigma}_h^M(g_h)=-g_h .
\end{align}
\end{proposition}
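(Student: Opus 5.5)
We verify three things in turn: the local $\RT^0$ structure, the divergence identity, and normal continuity across interior faces. The third is the only one that uses the discrete equation \eqref{eq:CR-rhs}.

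\emph{Local structure and divergence.} On each $T$, $\gradH z_h^{\CR}(g_h)|_T$ is a constant vector because $z_h^{\CR}(g_h)|_T\in\Pone(T)$. The correction $-\frac{g_T}{d}(\bm x-\bm x_T)$ equals $\bm a+b\bm x$ with $\bm a=\frac{g_T}{d}\bm x_T$ and $b=-\frac{g_T}{d}$. Hence $\bm{\sigma}_h^M(g_h)|_T\in\RT^0(T)$, and elementwise
\begin{align*}
\divop\bm{\sigma}_h^M(g_h)|_T=-\frac{g_T}{d}\,\divop\bm x=-g_T .
\end{align*}
Once normal continuity is established we have $\bm{\sigma}_h^M(g_h)\in V_h^{\RT}\subset H(\divop;\Omega)$. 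The distributional divergence of a normal-continuous piecewise smooth field is then its elementwise divergence, so $\divop\bm{\sigma}_h^M(g_h)=-g_h$.

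\emph{Normal continuity (the main step).} Any $\RT^0(T)$ field has constant normal component on each face $F\subset\partial T$, since $\bm x\cdot\bm n_F$ is constant on $F$. It therefore suffices to show that the face integrals $\int_F\bm{\sigma}_h^M|_{T_\pm}\cdot\bm n_F\,ds$ coincide for every $F\in\mathcal F_h^i$. Fix such an $F=\partial T_+\cap\partial T_-$ and let $\psi_F\in V_{h,0}^{\CR}$ be the CR basis function with $\int_{F'}\psi_F\,ds=|F|_{d-1}\delta_{FF'}$, supported on $T_+\cup T_-$. Integration by parts on each $T\in\{T_+,T_-\}$ gives
\begin{align*}
(\bm{\sigma}_h^M,\nabla\psi_F)_T+(\divop\bm{\sigma}_h^M,\psi_F)_T=\sum_{F'\subset\partial T}(\bm{\sigma}_h^M\cdot\bm n_{T},\psi_F)_{F'} .
\end{align*}
Because the normal component is constant on each face and $\psi_F$ has face mean zero on every $F'\neq F$, the right-hand side reduces to $|F|_{d-1}\,\bm{\sigma}_h^M|_T\cdot\bm n_{T}|_F$.

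On the left-hand side, $(\bm{\sigma}_h^M,\nabla\psi_F)_T=(\gradH z_h^{\CR},\nabla\psi_F)_T-\frac{g_T}{d}\big((\bm x-\bm x_T),\nabla\psi_F\big)_T$. The last term vanishes because $\nabla\psi_F$ is constant on $T$ and $\int_T(\bm x-\bm x_T)\,dx=0$; this is exactly where the barycentre shift is needed. The divergence term equals $-g_T\int_T\psi_F\,dx$.

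Summing over $T_\pm$ (with $\bm n_{T_+}=\bm n_F$ and $\bm n_{T_-}=-\bm n_F$) yields
\begin{align*}
|F|_{d-1}\,[\![\bm{\sigma}_h^M\cdot\bm n]\!]_F=(\gradH z_h^{\CR},\gradH\psi_F)-(g_h,\psi_F),
\end{align*}
which is zero by \eqref{eq:CR-rhs} tested with $v_h=\psi_F$.

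The only delicate points are as follows. First, we need the fact that for $\Pone$ functions the face mean equals the value at the face barycentre, so the basis $\psi_F$ is well defined with the stated support; this holds for $d=2,3$ alike. Second, the sign conventions for normals must be kept consistent. Boundary faces impose no condition, since $V_h^{\RT}$ only requires interior normal continuity.
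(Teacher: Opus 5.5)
Your proof is correct. It shares the paper's key idea, testing \eqref{eq:CR-rhs} with the CR face basis function of an interior face (your $\psi_F$, the paper's $\varphi_F$), but you verify the cancellation differently.

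The paper computes everything explicitly. It uses the barycentric formulas $\nabla\varphi_F|_T=\frac{|F|_{d-1}}{|T|_d}\bm n_{T,F}$ and $\int_T\varphi_F\,dx=|T|_d/(d+1)$ to turn the tested equation into $|F|_{d-1}\sum_{T}\gradH z_h^{\CR}|_T\cdot\bm n_{T,F}=\frac{1}{d+1}\sum_T g_T|T|_d$. It then cancels this against the geometric identity $(\bm x-\bm x_T)\cdot\bm n_{T,F}=\frac{d|T|_d}{(d+1)|F|_{d-1}}$ on $F$, which comes from the height of $T$ over $F$ and the barycentric coordinate $1/(d+1)$ of $\bm x_T$.

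You instead apply Green's formula to the $\RT^0$ field against $\psi_F$ on each of $T_\pm$. Constancy of the normal trace on faces and the face-mean property of $\psi_F$ reduce the boundary term to the single flux through $F$. The vanishing first moment $\int_T(\bm x-\bm x_T)\,dx=0$ removes the correction term. This yields $|F|_{d-1}[\![\bm{\sigma}_h^M\cdot\bm n]\!]_F=(\gradH z_h^{\CR},\gradH\psi_F)-(g_h,\psi_F)$.

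What your route buys: no height or barycentre-distance computation is needed. It also shows slightly more, namely that the normal jump of the Marini-type field is exactly the discrete residual at $\psi_F$, so normal continuity is equivalent to the CR equation. The only property of $\bm x_T$ you use is its vanishing first moment, the same fact the paper reuses in Proposition~\ref{prop:defect-decomp} and Lemma~\ref{lem:best-equilibrated}.

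What the paper's route buys: an explicit face-flux formula, $|F|_{d-1}\bm{\sigma}_h^M|_T\cdot\bm n_{T,F}=|F|_{d-1}\gradH z_h^{\CR}|_T\cdot\bm n_{T,F}-g_T|T|_d/(d+1)$, which is convenient for implementation checks such as the flux-continuity residual test reported at the end of Section~\ref{sec:numerics}.
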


\begin{proof}
We set $g_T:=g_h|_T$. Because $\gradH z_h^{\CR}(g_h)|_T$ is constant and $\divop(\bm x-\bm x_T)=d$, definition \eqref{eq:Marini-flux} gives
\begin{align*}
  \bm{\sigma}_h^M(g_h)|_T\in\RT^0(T),
  \quad
  \divop\bm{\sigma}_h^M(g_h)|_T=-g_T.
\end{align*}
It remains to prove normal continuity across each interior face. Let $F=\partial T_+\cap\partial T_-\in\mathcal F_h^i$, let $\bm n_{T,F}$ denote the outward unit normal to $T$ on $F$, and let $\varphi_F\in V_{h,0}^{\CR}$ be the global CR basis function whose face average is one on $F$ and zero on any other face. For $T\in\{T_+,T_-\}$, the barycentric-coordinate formula for $\varphi_F|_T$ gives
\begin{align}\label{eq:CR-face-basis-identities}
  \nabla\varphi_F|_T
  =\frac{|F|_{d-1}}{|T|_d}\bm n_{T,F},
  \quad
  \int_T\varphi_F\,dx=\frac{|T|_d}{d+1}.
\end{align}
Furthermore, for any $\bm x\in F$,
\begin{align}\label{eq:barycentre-face-distance}
  (\bm x-\bm x_T)\cdot\bm n_{T,F}
  =\frac{d|T|_d}{(d+1)|F|_{d-1}}.
\end{align}
Indeed, the height of $T$ over $F$ is $d|T|_d/|F|_{d-1}$, and the barycentre has barycentric coordinate $1/(d+1)$ at the vertex opposite $F$. Testing \eqref{eq:CR-rhs} with $\varphi_F$ and using \eqref{eq:CR-face-basis-identities} yields
\begin{align*}
  |F|_{d-1}
  \sum_{T\in\{T_+,T_-\}}
  \gradH z_h^{\CR}(g_h)|_T\cdot\bm n_{T,F}
  =\frac1{d+1}
  \sum_{T\in\{T_+,T_-\}}g_T|T|_d.
\end{align*}
Combining this identity with \eqref{eq:Marini-flux} and \eqref{eq:barycentre-face-distance}, we obtain
\begin{align*}
  |F|_{d-1}
  \sum_{T\in\{T_+,T_-\}}
  \bm{\sigma}_h^M(g_h)|_T\cdot\bm n_{T,F}=0.
\end{align*}
Thus, the normal component is continuous across every interior face. Together with the local $\RT^0$ membership, this proves $\bm{\sigma}_h^M(g_h)\in V_h^{\RT}\subset H(\divop;\Omega)$, and the elementwise divergence identity proves $\divop\bm{\sigma}_h^M(g_h)=-g_h$.
\end{proof}

For the same function $g_h\in X_h$, let $z_h^c(g_h)\in V_h^c$ be the conforming solution
\begin{align}\label{eq:conforming-rhs}
  a(z_h^c(g_h),v_h)=(g_h,v_h)
  \quad \forall v_h\in V_h^c.
\end{align}
We define the central computable constant as
\begin{align}\label{eq:kappaM-def}
  \kappa_{M,h}
  :=\sup_{0\ne g_h\in X_h}
  \frac{\|\bm{\sigma}_h^M(g_h)-\nabla z_h^c(g_h)\|_{L^2(\Omega)^d}}
       {\|g_h\|_{L^2(\Omega)}}.
\end{align}

\begin{proposition}[Exact CR--Marini defect decomposition]\label{prop:defect-decomp}
For each simplex $T\in\mathbb T_h$ with vertices
$\bm x_1,\ldots,\bm x_{d+1}$, we define
\begin{align*}
  \mu_T:=\int_T |\bm x-\bm x_T|^2\,dx.
\end{align*}
It has the explicit representation
\begin{align}\label{eq:muT}
\mu_T
 =  \frac{|T|_d}{(d+1)^2(d+2)}
    \sum_{1\le i<j\le d+1}|\bm x_i-\bm x_j|^2.
\end{align}
For any $g_h\in X_h$, we write $g_T:=g_h|_T\in\R$ for each
$T\in\mathbb T_h$. Then,
\begin{align}
 \|\bm{\sigma}_h^M(g_h)-\nabla z_h^c(g_h)\|_{L^2(\Omega)^d}^2
 =|z_h^{\CR}(g_h)-z_h^c(g_h)|_{H^1(\mathbb{T}_h)}^2
   +\frac1{d^2}\sum_{T\in\mathbb{T}_h}\mu_T\,|g_T|^2,  \label{eq:defect-decomp}
\end{align}
and
\begin{align}\label{eq:energy-gap}
 |z_h^{\CR}(g_h)-z_h^c(g_h)|_{H^1(\mathbb{T}_h)}^2
 =|z_h^{\CR}(g_h)|_{H^1(\mathbb{T}_h)}^2-|z_h^c(g_h)|_{H^1(\Omega)}^2.
\end{align}
\end{proposition}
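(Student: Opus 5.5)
We prove the three claims in order: the moment formula \eqref{eq:muT}, then \eqref{eq:defect-decomp}, then \eqref{eq:energy-gap}.

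\emph{Formula for $\mu_T$.} Since $\bm x_T$ is the mean of the vertices, write $\bm x-\bm x_T=\sum_i\lambda_i(\bm x_i-\bm x_T)$ in barycentric coordinates. Then
\begin{align*}
  \mu_T=\sum_{i,j}(\bm x_i-\bm x_T)\cdot(\bm x_j-\bm x_T)\int_T\lambda_i\lambda_j\,dx .
\end{align*}
The standard moment formulas give $\int_T\lambda_i\lambda_j=|T|_d(1+\delta_{ij})/((d+1)(d+2))$. Because $\sum_i(\bm x_i-\bm x_T)=0$, the off-diagonal part collapses, and we get
\begin{align*}
  \mu_T=\frac{|T|_d}{(d+1)(d+2)}\sum_i|\bm x_i-\bm x_T|^2 .
\end{align*}
The classical identity $\sum_i|\bm x_i-\bm x_T|^2=\frac1{d+1}\sum_{i<j}|\bm x_i-\bm x_j|^2$ then yields \eqref{eq:muT}.

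\emph{Defect decomposition.} On each $T$, write
\begin{align*}
  \bm\sigma_h^M-\nabla z_h^c=\gradH(z_h^{\CR}-z_h^c)|_T-\frac{g_T}{d}(\bm x-\bm x_T).
\end{align*}
The first term is constant on $T$, and $\int_T(\bm x-\bm x_T)\,dx=0$ by the definition of the barycentre. Hence the cross term vanishes elementwise. Squaring and summing over $T$ gives \eqref{eq:defect-decomp} directly, with $\int_T|\bm x-\bm x_T|^2=\mu_T$. This step is purely local.

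\emph{Energy gap \eqref{eq:energy-gap}.} This is the main point, and it follows from $V_h^c\subset V_{h,0}^{\CR}$. A continuous piecewise linear function vanishing on $\partial\Omega$ has zero jumps, so all face means of the jumps vanish. Take $v_h=z_h^c$ in \eqref{eq:CR-rhs} and in \eqref{eq:conforming-rhs}; both right-hand sides equal $(g_h,z_h^c)$. Therefore
\begin{align*}
  (\gradH z_h^{\CR},\nabla z_h^c)=|z_h^c|_{H^1(\Omega)}^2 .
\end{align*}
Expanding
\begin{align*}
  |z_h^{\CR}-z_h^c|_{H^1(\mathbb T_h)}^2=|z_h^{\CR}|^2-2(\gradH z_h^{\CR},\nabla z_h^c)+|z_h^c|^2
\end{align*}
then gives \eqref{eq:energy-gap}.

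The only step needing real care is the constant bookkeeping in \eqref{eq:muT}: the normalisation of the barycentric moment formula and the vertex-centroid identity. The remaining steps are orthogonality arguments.
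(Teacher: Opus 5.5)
Your proposal is correct and follows the paper's proof step for step. The energy-gap identity comes from $V_h^c\subset V_{h,0}^{\CR}$ tested with $z_h^c$. The defect decomposition comes from the elementwise vanishing cross term via $\int_T(\bm x-\bm x_T)\,dx=0$. The formula for $\mu_T$ comes from the barycentric moment formula together with $\sum_i(\bm x_i-\bm x_T)=0$ and the vertex--centroid identity. Only the order in which you present the three claims differs from the paper.
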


\begin{proof}
Because any conforming $\Pone$ function with homogeneous boundary values also satisfies the CR face-mean conditions, $V_h^c\subset V_{h,0}^{\CR}$. Therefore, for any $v_h^c\in V_h^c$,
\begin{align*}
 (\gradH z_h^{\CR}(g_h)-\nabla z_h^c(g_h),\nabla v_h^c)=0.
\end{align*}
Taking $v_h^c=z_h^c(g_h)$ gives
\begin{align*}
 (\gradH z_h^{\CR}(g_h),\nabla z_h^c(g_h))
 =\|\nabla z_h^c(g_h)\|_{L^2(\Omega)^d}^2.
\end{align*}
Therefore,
\begin{align*}
\begin{aligned}
 |z_h^{\CR}(g_h)-z_h^c(g_h)|_{H^1(\mathbb T_h)}^2
 &=
 |z_h^{\CR}(g_h)|_{H^1(\mathbb T_h)}^2
 +|z_h^c(g_h)|_{H^1(\Omega)}^2 
 -2(\gradH z_h^{\CR}(g_h),\nabla z_h^c(g_h)) \\
 &=
 |z_h^{\CR}(g_h)|_{H^1(\mathbb T_h)}^2
 -|z_h^c(g_h)|_{H^1(\Omega)}^2,
\end{aligned}
\end{align*}
which proves \eqref{eq:energy-gap}.

On each simplex, the vector $\gradH z_h^{\CR}(g_h)-\nabla z_h^c(g_h)$ is constant, whereas $\int_T(\bm x-\bm x_T)\,dx=0$. Consequently, the cross term in the square of \eqref{eq:Marini-flux} vanishes elementwise, and
\begin{align*}
 \|\bm{\sigma}_h^M(g_h)-\nabla z_h^c(g_h)\|_{L^2(\Omega)^d}^2
 =|z_h^{\CR}(g_h)-z_h^c(g_h)|_{H^1(\mathbb{T}_h)}^2
 +\frac1{d^2}\sum_T |g_T|^2\int_T|\bm x-\bm x_T|^2\,dx.
\end{align*}
For barycentric coordinates $\lambda_i$ on a $d$-simplex,
\begin{align*}
  \frac1{|T|_d}\int_T\lambda_i\lambda_j\,dx
  =\frac{1+\delta_{ij}}{(d+1)(d+2)}.
\end{align*}
Writing $\bm y_i:=\bm x_i-\bm x_T$, we have $\sum_{i=1}^{d+1}\bm y_i=0$. It follows that
\begin{align*}
 \int_T|\bm x-\bm x_T|^2\,dx
 =\frac{|T|_d}{(d+1)(d+2)}\sum_{i=1}^{d+1}|\bm x_i-\bm x_T|^2.
\end{align*}
Using
\begin{align*}
 \sum_{i=1}^{d+1}|\bm x_i-\bm x_T|^2
 =\frac1{d+1}\sum_{1\le i<j\le d+1}|\bm x_i-\bm x_j|^2
\end{align*}
gives \eqref{eq:muT}. By the definition of $\mu_T$, the preceding expansion is precisely \eqref{eq:defect-decomp}, which completes the proof.
\end{proof}

Both terms in \eqref{eq:defect-decomp} are defined on the physical mesh: $\mu_T$ contains the actual edge lengths of $T$, and the other term is the CR--conforming energy gap on the same mesh. Neither quantity involves an inradius or a shape-regularity factor.

\begin{lemma}[Hypercircle control for piecewise constant right-hand sides]\label{lem:hypercircle}
Let $g_h\in X_h$, and let $z(g_h)\in V$ solve $-\varDelta z(g_h)=g_h$. Then,
\begin{align}\label{eq:hypercircle-bound}
  |z(g_h)-z_h^c(g_h)|_{H^1(\Omega)}
  \le
  \|\bm{\sigma}_h^M(g_h)-\nabla z_h^c(g_h)\|_{L^2(\Omega)^d}
  \le \kappa_{M,h}\|g_h\|_{L^2(\Omega)}.
\end{align}
\end{lemma}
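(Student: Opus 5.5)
The second inequality in \eqref{eq:hypercircle-bound} is immediate from the definition \eqref{eq:kappaM-def} of $\kappa_{M,h}$ as a supremum over $X_h$ (for $g_h=0$ everything vanishes). So the work is the first inequality, a Prager--Synge (hypercircle) estimate. Write $z:=z(g_h)$, $z_h:=z_h^c(g_h)$ and $\bm\sigma:=\bm{\sigma}_h^M(g_h)$. I plan to expand $\|\bm\sigma-\nabla z_h\|^2$ around $\nabla z$ and show that the cross term vanishes.

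First, Proposition~\ref{prop:Marini} gives $\bm\sigma\in H(\divop;\Omega)$ with $\divop\bm\sigma=-g_h$. Take any $v\in V=H_0^1(\Omega)$. The Green formula in $H(\divop;\Omega)\times H_0^1(\Omega)$ has no boundary term, so
\begin{align*}
  (\bm\sigma,\nabla v)=-(\divop\bm\sigma,v)=(g_h,v)=(\nabla z,\nabla v),
\end{align*}
where the last equality is the weak form of $-\varDelta z=g_h$. Hence $(\bm\sigma-\nabla z,\nabla v)=0$ for all $v\in V$.

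Next, apply this with $v=z-z_h\in V$. Since $V_h^c\subset V$, this choice is admissible. Then
\begin{align*}
  \|\bm\sigma-\nabla z_h\|_{L^2(\Omega)^d}^2
  =\|\bm\sigma-\nabla z\|_{L^2(\Omega)^d}^2+|z-z_h|_{H^1(\Omega)}^2
  +2(\bm\sigma-\nabla z,\nabla(z-z_h)),
\end{align*}
and the cross term is zero. Discarding the nonnegative first term gives $|z-z_h|_{H^1(\Omega)}\le\|\bm\sigma-\nabla z_h\|_{L^2(\Omega)^d}$.

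There is no real obstacle here. The only point needing care is the justification of the integration by parts: the normal continuity of $\bm\sigma$ from Proposition~\ref{prop:Marini} is exactly what makes $\bm\sigma\in H(\divop;\Omega)$. With that in hand, the zero trace of $v$ removes the boundary term. Notably, no regularity of $z$ beyond $H_0^1$ is used.
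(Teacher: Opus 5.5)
Your proposal is correct and follows the paper's proof essentially verbatim. You use the Green formula for $\bm{\sigma}_h^M(g_h)\in H(\divop;\Omega)$ with $\divop\bm{\sigma}_h^M(g_h)=-g_h$ to get the orthogonality $(\bm{\sigma}_h^M(g_h)-\nabla z,\nabla(z-z_h^c(g_h)))=0$, then the Prager--Synge identity, and then the definition of $\kappa_{M,h}$, with $g_h=0$ trivial. The only cosmetic difference is that you state the orthogonality for all $v\in V$ before specialising to $v=z-z_h^c(g_h)$, which changes nothing.
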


\begin{proof}
 We set $z:=z(g_h)$, $z_h:=z_h^c(g_h)$, and $\bm{\sigma}_h:=\bm{\sigma}_h^M(g_h)$. From Proposition~\ref{prop:Marini}, $\bm{\sigma}_h\in H(\divop;\Omega)$ and $\divop\bm{\sigma}_h=-g_h$. Since $z-z_h\in H_0^1(\Omega)$, the $H(\divop)$ Green formula and the weak formulation for $z$ yield
\begin{align*}
 (\bm{\sigma}_h,\nabla(z-z_h))
 =-(\divop\bm{\sigma}_h,z-z_h)
 =(g_h,z-z_h)
 =(\nabla z,\nabla(z-z_h)).
\end{align*}
Therefore,
\begin{align*}
 (\bm{\sigma}_h-\nabla z,\nabla z-\nabla z_h)=0.
\end{align*}
Thus,
\begin{align*}
 \|\bm{\sigma}_h-\nabla z_h\|_{L^2(\Omega)^d}^2
 =
 \|\nabla z-\nabla z_h\|_{L^2(\Omega)^d}^2
 +\|\bm{\sigma}_h-\nabla z\|_{L^2(\Omega)^d}^2,
\end{align*}
which is the Prager--Synge identity in the present setting \citep{PragerSynge1947}. Because the second term on the right-hand side is non-negative, taking square roots gives
\begin{align*}
 |z-z_h|_{H^1(\Omega)}
 \le
 \|\bm{\sigma}_h-\nabla z_h\|_{L^2(\Omega)^d}.
\end{align*}
For $g_h\ne0$, the definition of $\kappa_{M,h}$ gives
\begin{align*}
 \|\bm{\sigma}_h-\nabla z_h\|_{L^2(\Omega)^d}
 \le
 \kappa_{M,h}\|g_h\|_{L^2(\Omega)}.
\end{align*}
For $g_h=0$, this inequality is trivial. Thus, \eqref{eq:hypercircle-bound} follows.
\end{proof}

\begin{lemma}[Best equilibrated RT approximation]
\label{lem:best-equilibrated}
For $g_h\in X_h$, let $z=z(g_h)\in V$ solve
\begin{align*}
-\varDelta z=g_h
\quad\text{in }\Omega,
\quad
z=0
\quad\text{on }\partial\Omega,
\end{align*}
and we define
\begin{align*}
\mathcal Q_h(g_h)
:=
\{\bm{\tau}_h\in V_h^{\RT}: \
\divop\bm{\tau}_h=-g_h\}.
\end{align*}
Then, $\bm{\sigma}_h^M(g_h)$ is the unique minimiser of the equilibrated RT approximation problem and
\begin{align}\label{eq:best-equilibrated}
\|\nabla z-\bm{\sigma}_h^M(g_h)\|_{L^2(\Omega)^d}
=
\min_{\bm{\tau}_h\in\mathcal Q_h(g_h)}
\|\nabla z-\bm{\tau}_h\|_{L^2(\Omega)^d}.
\end{align}
\end{lemma}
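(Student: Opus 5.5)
The plan is to characterise the minimiser of the affine problem over $\mathcal Q_h(g_h)$ by an orthogonality condition and then check that $\bm{\sigma}_h^M(g_h)$ satisfies it. We already know from Proposition~\ref{prop:Marini} that $\bm{\sigma}_h^M(g_h)\in\mathcal Q_h(g_h)$, so $\mathcal Q_h(g_h)$ is nonempty. It is an affine translate of the divergence-free subspace
\begin{align*}
  \mathcal Q_h(0)=\{\bm{\tau}_h\in V_h^{\RT}:\ \divop\bm{\tau}_h=0\}.
\end{align*}
Since $\|\cdot\|_{L^2(\Omega)^d}$ is a Hilbert norm and $\mathcal Q_h(0)$ is finite dimensional, the minimiser exists and is unique. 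An element $\bm{\sigma}\in\mathcal Q_h(g_h)$ is this minimiser if and only if $(\nabla z-\bm{\sigma},\bm{\tau}_h)=0$ for all $\bm{\tau}_h\in\mathcal Q_h(0)$. Once that condition is verified for $\bm{\sigma}=\bm{\sigma}_h^M(g_h)$, the Pythagorean identity
\begin{align*}
  \|\nabla z-\bm{\tau}_h\|^2=\|\nabla z-\bm{\sigma}_h^M\|^2+\|\bm{\sigma}_h^M-\bm{\tau}_h\|^2
\end{align*}
gives both \eqref{eq:best-equilibrated} and uniqueness.

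The orthogonality splits into two pieces. First, $(\nabla z,\bm{\tau}_h)=-(z,\divop\bm{\tau}_h)=0$ by the $H(\divop)$ Green formula, since $z\in H_0^1(\Omega)$ and $\divop\bm{\tau}_h=0$. Second, we need $(\bm{\sigma}_h^M(g_h),\bm{\tau}_h)=0$ for every divergence-free $\bm{\tau}_h\in V_h^{\RT}$. A divergence-free $\RT^0$ field is elementwise constant, because $\divop(\bm a+b\bm x)=db$ forces $b=0$. Hence the term $-\tfrac{g_T}{d}(\bm x-\bm x_T)$ in \eqref{eq:Marini-flux} integrates to zero against $\bm{\tau}_h|_T$, using $\int_T(\bm x-\bm x_T)\,dx=0$. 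This leaves the identity $(\gradH z_h^{\CR}(g_h),\bm{\tau}_h)=0$ to be proved.

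This last identity is the key step. I would prove it by elementwise integration by parts with $\bm{\tau}_h$ constant on each $T$:
\begin{align*}
  (\gradH v_h,\bm{\tau}_h)
  =\sum_T\int_{\partial T}v_h\,\bm{\tau}_h\cdot\bm n_T\,ds
  =\sum_{F\in\mathcal F_h}(\bm{\tau}_h\cdot\bm n_F)\int_F[\![v_h]\!]\,ds,
\end{align*}
valid for any $v_h\in V_{h,0}^{\CR}$. The normal component $\bm{\tau}_h\cdot\bm n_F$ is constant on each face, since $\bm{\tau}_h$ is elementwise constant. It is single-valued on interior faces by normal continuity, which is what allows the interior contributions to be rewritten with the jump. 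The face-mean conditions defining $V_{h,0}^{\CR}$, on both interior and boundary faces, then make every term vanish. Thus $\gradH V_{h,0}^{\CR}$ is $L^2$-orthogonal to the divergence-free part of $V_h^{\RT}$, and taking $v_h=z_h^{\CR}(g_h)$ gives the claim. The main care is in the sign and orientation bookkeeping on interior versus boundary faces, which matches the jump convention fixed in Section~\ref{sec:marini}. Note that the CR equation \eqref{eq:CR-rhs} itself is not needed for this step; it enters only through the equilibration property from Proposition~\ref{prop:Marini}.
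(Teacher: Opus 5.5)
Your proof is correct and follows the paper's argument essentially step for step. You use membership via Proposition~\ref{prop:Marini}, orthogonality of $\nabla z-\bm{\sigma}_h^M(g_h)$ to divergence-free (hence elementwise constant) $\RT^0$ fields via the Green formula and the vanishing barycentric moment, face-wise integration by parts against the CR face-mean conditions, and the Pythagorean identity for both optimality and uniqueness; the projection-theorem framing is only cosmetic, since the paper applies the Pythagorean identity directly to $\bm{\delta}_h=\bm{\tau}_h-\bm{\sigma}_h^M(g_h)$.
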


\begin{proof}
From Proposition~\ref{prop:Marini}, $\bm{\sigma}_h^M(g_h)\in\mathcal Q_h(g_h)$. Let $\bm{\tau}_h\in\mathcal Q_h(g_h)$ and we set
\begin{align*}
\bm{\delta}_h:=\bm{\tau}_h-\bm{\sigma}_h^M(g_h).
\end{align*}
Then, $\bm{\delta}_h \in V_h^{\RT}$ and $\divop\bm{\delta}_h=0$. Because an $\RT^0$ field with zero divergence is constant on each element, $\bm{\delta}_h|_T$ is constant for any $T\in\mathbb T_h$. Since $z\in H_0^1(\Omega)$, the $H(\divop)$ Green formula gives
\begin{align*}
(\nabla z,\bm{\delta}_h)
=-(z,\divop\bm{\delta}_h)=0.
\end{align*}
Furthermore, using the Marini representation and
$\int_T(\bm x-\bm x_T)\,dx=0$,
\begin{align*}
(\bm{\sigma}_h^M(g_h),\bm{\delta}_h)
=
(\gradH z_h^{\CR}(g_h),\bm{\delta}_h).
\end{align*}
Elementwise integration by parts yields
\begin{align*}
(\gradH z_h^{\CR}(g_h),\bm{\delta}_h)
=
\sum_{T\in\mathbb T_h}
\int_{\partial T}
z_h^{\CR}(g_h)\,\bm{\delta}_h\cdot \bm n_T\,ds.
\end{align*}
On any interior face, $\bm{\delta}_h\cdot \bm n_F$ is single-valued and constant, whereas the CR jump has zero face mean. Therefore, all interior face contributions vanish. On boundary faces, the same conclusion follows from the homogeneous CR face-mean condition. Thus,
\begin{align*}
(\bm{\sigma}_h^M(g_h),\bm{\delta}_h)=0.
\end{align*}
Consequently,
\begin{align*}
(\nabla z-\bm{\sigma}_h^M(g_h),\bm{\delta}_h)=0.
\end{align*}
Because $\bm{\tau}_h=\bm{\sigma}_h^M(g_h)+\bm{\delta}_h$ and $(\nabla z-\bm{\sigma}_h^M(g_h),\bm{\delta}_h)=0$, the Pythagorean identity gives
\begin{align*}
\|\nabla z-\bm{\tau}_h\|_{L^2(\Omega)^d}^2
=
\|\nabla z-\bm{\sigma}_h^M(g_h)\|_{L^2(\Omega)^d}^2
+\|\bm{\delta}_h\|_{L^2(\Omega)^d}^2.
\end{align*}
As $\bm{\tau}_h\in\mathcal Q_h(g_h)$ was arbitrary, this proves \eqref{eq:best-equilibrated}. Furthermore, equality is possible only when $\|\bm{\delta}_h\|_{L^2(\Omega)^d}=0$, that is, when $\bm{\tau}_h=\bm{\sigma}_h^M(g_h)$; hence the minimiser is unique.
\end{proof}

\begin{corollary}[Joint hypercircle minimisation]\label{cor:joint-hypercircle}
For any $g_h\in X_h$, let $z=z(g_h)$ be the weak solution above. Then,
\begin{align}\label{eq:joint-hypercircle}
&\min_{v_h\in V_h^c}\ \min_{\bm{\tau}_h\in\mathcal Q_h(g_h)}
\|\bm{\tau}_h-\nabla v_h\|_{L^2(\Omega)^d} =
\|\bm{\sigma}_h^M(g_h)-\nabla z_h^c(g_h)\|_{L^2(\Omega)^d},
\end{align}
The minimum is attained uniquely when
\begin{align*}
v_h=z_h^c(g_h),
\quad
\bm{\tau}_h=\bm{\sigma}_h^M(g_h).
\end{align*}
Consequently,
\begin{align}\label{eq:kappa-joint-min}
\kappa_{M,h}
=\sup_{0\ne g_h\in X_h}
\frac{1}{\|g_h\|_{L^2(\Omega)}}
\min_{v_h\in V_h^c}\ \min_{\bm{\tau}_h\in\mathcal Q_h(g_h)}
\|\bm{\tau}_h-\nabla v_h\|_{L^2(\Omega)^d}.
\end{align}
\end{corollary}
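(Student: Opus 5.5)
The plan is to show that the joint minimisation splits into two independent orthogonal problems. For any $v_h\in V_h^c$ and $\bm{\tau}_h\in\mathcal Q_h(g_h)$ we write
\begin{align*}
\bm{\tau}_h-\nabla v_h
=(\bm{\tau}_h-\bm{\sigma}_h^M(g_h))
+(\bm{\sigma}_h^M(g_h)-\nabla z_h^c(g_h))
+\nabla(z_h^c(g_h)-v_h)
=:\bm{\delta}_h+\bm e_h+\nabla w_h .
\end{align*}
The aim is to prove that these three terms are mutually $L^2$-orthogonal. This gives
\begin{align*}
\|\bm{\tau}_h-\nabla v_h\|^2=\|\bm{\delta}_h\|^2+\|\bm e_h\|^2+\|\nabla w_h\|^2 .
\end{align*}
The minimum is then attained exactly when $\bm{\delta}_h=0$ and $\nabla w_h=0$. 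Since $w_h\in H^1_0$, the second condition gives $w_h=0$, so the minimiser is unique and equals $(z_h^c(g_h),\bm{\sigma}_h^M(g_h))$. Formula \eqref{eq:kappa-joint-min} then follows by inserting this into the definition \eqref{eq:kappaM-def}.

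\textbf{The orthogonalities.} There are three pairs to check.

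First, $(\bm{\delta}_h,\nabla w_h)=0$. Here $\bm{\delta}_h\in V_h^{\RT}\subset H(\divop)$ with $\divop\bm{\delta}_h=0$, and $w_h\in H^1_0(\Omega)$, so Green's formula gives zero.

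Second, $(\bm{\delta}_h,\bm e_h)=0$. I would reuse the computation in the proof of Lemma~\ref{lem:best-equilibrated}, which gives $(\bm{\sigma}_h^M(g_h),\bm{\delta}_h)=0$. The remaining piece $(\nabla z_h^c(g_h),\bm{\delta}_h)$ vanishes by Green's formula, as in the first pair.

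Third, $(\bm e_h,\nabla w_h)=0$. By the Marini representation, $\bm e_h|_T=\gradH(z_h^{\CR}-z_h^c)|_T-\frac{g_T}{d}(\bm x-\bm x_T)$. The term $\frac{g_T}{d}(\bm x-\bm x_T)$ integrates to zero against the elementwise constant $\nabla w_h$, because $\int_T(\bm x-\bm x_T)\,dx=0$. The first term is orthogonal to $\nabla V_h^c$ by the Galerkin orthogonality used in the proof of Proposition~\ref{prop:defect-decomp}, which relies on $V_h^c\subset V_{h,0}^{\CR}$.

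\textbf{Main obstacle.} The step needing the most care is the third orthogonality, since it combines two separate facts: the barycentre cancellation and the CR--conforming Galerkin orthogonality. An alternative route would be to use Lemma~\ref{lem:hypercircle}, but the direct three-term decomposition is cleaner and yields uniqueness at the same time. Note that the exact solution $z$ plays no role in the argument.
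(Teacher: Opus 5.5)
Your proof is correct, and it takes a different route from the paper's. The paper works through the exact solution $z$. Green's formula gives $(\bm\tau_h-\nabla z,\nabla(z-v_h))=0$ for every admissible pair, so $\|\bm\tau_h-\nabla v_h\|^2=|z-v_h|_{H^1(\Omega)}^2+\|\nabla z-\bm\tau_h\|^2$, which separates the two variables. The first term is then minimised by the Ritz best-approximation property and the second by Lemma~\ref{lem:best-equilibrated}, and uniqueness is inherited from those two problems.

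You instead expand around the Marini pair and show three orthogonality relations:
\begin{itemize}
\item $\bm e_h$ is orthogonal to $\nabla V_h^c$, by the CR--conforming Galerkin orthogonality together with barycentre cancellation, as in Proposition~\ref{prop:defect-decomp};
\item $\bm e_h$ is orthogonal to the divergence-free subspace of $V_h^{\RT}$, by the face-mean computation inside Lemma~\ref{lem:best-equilibrated} together with Green's formula;
\item these two subspaces are orthogonal to each other, by Green's formula against $V_h^c\subset H_0^1(\Omega)$.
\end{itemize}
All three checks are right.

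Your route is purely finite-dimensional and shows that the corollary does not involve $z$ at all. It also gives uniqueness of the pair directly: $\bm e_h$ is the minimal-norm element of the affine set $\bm e_h+\bigl(\nabla V_h^c\oplus\{\bm\delta_h\in V_h^{\RT}:\divop\bm\delta_h=0\}\bigr)$, and that sum is orthogonal, hence direct.

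The paper's route gives, in return, the exact identity $\|\bm\tau_h-\nabla v_h\|^2=|z-v_h|_{H^1(\Omega)}^2+\|\nabla z-\bm\tau_h\|^2$ for every admissible pair. This is what is reused in \eqref{eq:general-anisotropic-comparison} to bound $\kappa_{M,h}$ by interpolation errors of $z$. Your version yields only $\|\bm e_h\|\le\|\bm\tau_h-\nabla v_h\|$, so that comparison would have to go through the triangle inequality, losing at most a factor $\sqrt2$.

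One small point: state explicitly that $\bm\sigma_h^M(g_h)\in\mathcal Q_h(g_h)$ by Proposition~\ref{prop:Marini}. Both the admissibility of your minimiser and $\divop\bm\delta_h=0$ depend on it.
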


\begin{proof}
Let $v_h\in V_h^c$ and $\bm{\tau}_h\in\mathcal Q_h(g_h)$ be arbitrary. Because $z-v_h\in H_0^1(\Omega)$, the $H(\divop)$ Green formula and $\divop\bm{\tau}_h=-g_h$ give
\begin{align*}
(\bm{\tau}_h,\nabla(z-v_h))_{L^2(\Omega)^d}
=
-(\divop\bm{\tau}_h,z-v_h)_{L^2(\Omega)}
=
(g_h,z-v_h)_{L^2(\Omega)}.
\end{align*}
On the other hand, the weak formulation for $z$ yields
\begin{align*}
(\nabla z,\nabla(z-v_h))_{L^2(\Omega)^d}
=
(g_h,z-v_h)_{L^2(\Omega)}.
\end{align*}
Therefore,
\begin{align*}
(\bm{\tau}_h-\nabla z,\nabla(z-v_h))_{L^2(\Omega)^d}=0.
\end{align*}
Thus,
\begin{align*}
\|\bm{\tau}_h-\nabla v_h\|_{L^2(\Omega)^d}^2
=
|z-v_h|_{H^1(\Omega)}^2
+
\|\nabla z-\bm{\tau}_h\|_{L^2(\Omega)^d}^2.
\end{align*}
Because the two terms on the right-hand side depend separately on $v_h$ and $\bm{\tau}_h$, respectively, the Ritz best-approximation property and Lemma~\ref{lem:best-equilibrated} give
\begin{align*}
&\min_{v_h\in V_h^c}
 \min_{\bm{\tau}_h\in\mathcal Q_h(g_h)}
 \|\bm{\tau}_h-\nabla v_h\|_{L^2(\Omega)^d}^2\\
&\quad=
|z-z_h^c(g_h)|_{H^1(\Omega)}^2
+
\|\nabla z-\bm{\sigma}_h^M(g_h)\|_{L^2(\Omega)^d}^2
=
\|\bm{\sigma}_h^M(g_h)
       -\nabla z_h^c(g_h)\|_{L^2(\Omega)^d}^2.
\end{align*}
Taking square roots proves \eqref{eq:joint-hypercircle}. The uniqueness statements for the two separate minimisation problems show that the minimum is attained uniquely when
\begin{align*}
v_h=z_h^c(g_h),
\quad
\bm{\tau}_h=\bm{\sigma}_h^M(g_h).
\end{align*}
Finally, substituting \eqref{eq:joint-hypercircle} into the definition \eqref{eq:kappaM-def} gives \eqref{eq:kappa-joint-min}.
\end{proof}

\begin{remark}[Relation with the Liu--Oishi constant]\label{rem:LO}
In two dimensions, \eqref{eq:kappa-joint-min} is the lowest-order hypercircle constant used in \citet{LiuOishi2013,TakayasuLiuOishi2014}. Corollary~\ref{cor:joint-hypercircle} identifies the two unique minimisers as $v_h=z_h^c(g_h)$ and $\bm{\tau}_h=\bm{\sigma}_h^M(g_h)$. Thus, $\kappa_{M,h}$ is the CR--Marini realisation of that constant. Equation~\eqref{eq:kappa-joint-min} gives the corresponding simplicial quantity in three dimensions.
\end{remark}

\section{Computable a priori energy and \texorpdfstring{$L^2$}{L2} bounds}\label{sec:apriori}
The proofs use the optimal local Poincar\'e constant $C_{0,h}$ together with $\kappa_{M,h}$, whose finite-dimensional form is given in Section~\ref{sec:matrix}. This gives the mesh-dependent quantity $M_h$. For numerical use, Corollary~\ref{cor:diameter-bound} replaces $C_{0,h}$ by $h/\pi$ and defines $\widehat M_h$ and $\widehat{\mathfrak E}_h$. Section~\ref{sec:numerics} uses only these explicit hatted quantities.

We define
\begin{align}\label{eq:Mh-def}
  M_h:=\sqrt{C_{0,h}^2+\kappa_{M,h}^2}.
\end{align}
For later use, we define also the $f$-dependent quantity
\begin{align}\label{eq:Ehf-def}
  \mathfrak E_h(f)
  :=C_{0,h}\|f-\PiZero f\|_{L^2(\Omega)}
    +\kappa_{M,h}\|\PiZero f\|_{L^2(\Omega)}.
\end{align}
Because $\PiZero$ is the $L^2$-orthogonal projection, the Pythagorean identity and the Cauchy--Schwarz inequality applied to the two orthogonal components give
\begin{align}\label{eq:Ehf-global}
  \mathfrak E_h(f)\le M_h\|f\|_{L^2(\Omega)}.
\end{align}

Let $J:L^2(\Omega)\to V$ denote the Poisson solution operator and $J_h^c:=P_hJ:L^2(\Omega)\to V_h^c$ its conforming Galerkin approximation. We shall use the following operator identity.

\begin{proposition}[Square identity for the Ritz error operator]
\label{prop:operator-square}
We set $E_h^c:=J-J_h^c$. Then, for any $f,g\in L^2(\Omega)$,
\begin{align}\label{eq:operator-square-bilinear}
  (E_h^cf,g)_{L^2(\Omega)}
  =a(E_h^cf,E_h^cg).
\end{align}
Consequently, $E_h^c$ is a bounded positive self-adjoint operator on $L^2(\Omega)$, and
\begin{align}\label{eq:operator-square-norm}
  \|E_h^c\|_{\mathcal L(L^2(\Omega),L^2(\Omega))}
  =
  \|E_h^c\|_{\mathcal L(L^2(\Omega),V)}^2,
\end{align}
where $V$ is equipped with the energy norm $|\cdot|_{H^1(\Omega)}$.
\end{proposition}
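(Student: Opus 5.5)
The plan is to derive the bilinear identity \eqref{eq:operator-square-bilinear} from the weak formulation of $Jg$ combined with Galerkin orthogonality, and then read off the operator properties and the norm identity.

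\emph{Bilinear identity.} Fix $f,g\in L^2(\Omega)$. Since $E_h^cf=Jf-P_hJf\in V$, we may test the weak formulation \eqref{eq:poisson} for $Jg$ with $v=E_h^cf$:
\begin{align*}
  (g,E_h^cf)=a(Jg,E_h^cf).
\end{align*}
By \eqref{eq:galerkin-orthogonality}, $a(E_h^cf,v_h)=0$ for every $v_h\in V_h^c$. Taking $v_h=P_hJg$ lets us subtract $a(P_hJg,E_h^cf)=0$, so
\begin{align*}
  (g,E_h^cf)=a(Jg-P_hJg,E_h^cf)=a(E_h^cg,E_h^cf).
\end{align*}
The inner product on the left is symmetric, which gives \eqref{eq:operator-square-bilinear}.

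\emph{Operator properties.} Boundedness follows because $J$ is bounded from $L^2(\Omega)$ into $V$ (by Friedrichs' inequality), $P_h$ is an $a$-orthogonal projection, and $V$ embeds continuously into $L^2(\Omega)$. Since the right-hand side of the identity is symmetric in $f$ and $g$, we get $(E_h^cf,g)=(f,E_h^cg)$, so $E_h^c$ is self-adjoint. Setting $g=f$ gives $(E_h^cf,f)=|E_h^cf|_{H^1(\Omega)}^2\ge0$, so $E_h^c$ is positive.

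\emph{Norm identity.} This is the step needing the most care, though it is still standard. Write $A:=E_h^c$ regarded as an operator from $L^2(\Omega)$ to $V$. Then the identity says $E_h^c=A^*A$ as an operator on $L^2(\Omega)$, where $A^*$ is the adjoint with respect to the energy inner product on $V$ and the $L^2$ inner product on $L^2(\Omega)$. The $C^*$-identity $\|A^*A\|=\|A\|^2$ then gives \eqref{eq:operator-square-norm}. To stay elementary, we can instead argue directly in two directions.

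\begin{itemize}[label={}]
\item For the upper bound, Cauchy--Schwarz in $a(\cdot,\cdot)$ gives
\begin{align*}
  |(E_h^cf,g)|\le\|A\|^2\|f\|\|g\|,
\end{align*}
and taking the supremum over $g$ yields $\|E_h^c\|\le\|A\|^2$.
\item For the lower bound, for positive self-adjoint operators we have
\begin{align*}
  \|E_h^c\|=\sup_{\|f\|=1}(E_h^cf,f)=\sup_{\|f\|=1}|E_h^cf|_{H^1(\Omega)}^2=\|A\|^2.
\end{align*}
Alternatively, $|Af|^2=(E_h^cf,f)\le\|E_h^c\|\|f\|^2$ directly gives $\|A\|^2\le\|E_h^c\|$.
\end{itemize}

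Combining the two directions proves the norm equality.
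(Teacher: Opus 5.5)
Your proposal is correct and follows the paper's proof essentially step for step. The shared steps are: the bilinear identity obtained by testing the weak form of $Jg$ with $E_h^cf$ and applying Galerkin orthogonality; positivity from $g=f$; self-adjointness from the symmetry of $a$; and the norm identity via the Rayleigh quotient of a positive self-adjoint operator. Your two-sided Cauchy--Schwarz variant of the last step is slightly more self-contained, since it proves $\|E_h^c\|_{\mathcal L(L^2(\Omega),L^2(\Omega))}\le\|E_h^c\|_{\mathcal L(L^2(\Omega),V)}^2$ without the Rayleigh-quotient characterisation of the operator norm.
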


\begin{proof}
Because $Jg$ satisfies $a(Jg,v)=(g,v)$ for any $v\in V$, and $E_h^cf$ is energy-orthogonal to $V_h^c$, we have
\begin{align*}
  (E_h^cf,g)
  &=a(E_h^cf,Jg) 
  =a(E_h^cf,Jg-J_h^cg) 
  =a(E_h^cf,E_h^cg).
\end{align*}
This proves \eqref{eq:operator-square-bilinear}. Taking $g=f$ gives
\begin{align*}
  (E_h^cf,f)=|E_h^cf|_{H^1(\Omega)}^2\ge0.
\end{align*}
Furthermore, by the symmetry of $a$,
\begin{align*}
  (E_h^cf,g)
  =a(E_h^cf,E_h^cg)
  =a(E_h^cg,E_h^cf)
  =(f,E_h^cg),
\end{align*}
hence $E_h^c$ is self-adjoint. The Lax--Milgram estimate for $J$, the energy stability of $P_h$, and the Poincar\'e inequality show that $E_h^c$ is bounded on $L^2(\Omega)$. Because $E_h^c$ is a bounded positive self-adjoint operator on
$L^2(\Omega)$,
\begin{align*}
  \|E_h^c\|_{\mathcal L(L^2(\Omega),L^2(\Omega))}
  &=
  \sup_{0\ne f\in L^2(\Omega)}
  \frac{(E_h^cf,f)_{L^2(\Omega)}}
       {\|f\|_{L^2(\Omega)}^2} 
  =
  \sup_{0\ne f\in L^2(\Omega)}
  \frac{|E_h^cf|_{H^1(\Omega)}^2}
       {\|f\|_{L^2(\Omega)}^2} \\
  &=
  \left(
  \sup_{0\ne f\in L^2(\Omega)}
  \frac{|E_h^cf|_{H^1(\Omega)}}
       {\|f\|_{L^2(\Omega)}}
  \right)^2 
  =
  \|E_h^c\|_{\mathcal L(L^2(\Omega),V)}^2.
\end{align*}
Here, the third equality holds because the quotient is non-negative, so taking the square commutes with taking the supremum. This proves \eqref{eq:operator-square-norm}.
\end{proof}

\subsection{The conforming \texorpdfstring{$\Pone$}{P1} approximation}\label{subsec:Lag-L2}
The energy estimate below is the lowest-order form of the computable hypercircle bounds in \citet{LiuOishi2013} and \citet{TakayasuLiuOishi2014}. Here $\kappa_{M,h}$ is realised by the scalar CR--Marini construction.

\begin{theorem}[Energy estimate]\label{thm:energy}
Let $f\in L^2(\Omega)$, and let $u\in V$ and $u_h\in V_h^c$ solve \eqref{eq:poisson} and \eqref{eq:P1}, respectively. Then,
\begin{align}\label{eq:energy-main}
  |u-u_h|_{H^1(\Omega)}
  \le \mathfrak E_h(f)
  \le M_h\|f\|_{L^2(\Omega)}.
\end{align}
No convexity of $\Omega$ and no global $H^2$-regularity are required.
\end{theorem}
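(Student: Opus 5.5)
We plan to split the data into its elementwise projection and the oscillation, $f=\PiZero f+(f-\PiZero f)$, and to use linearity of the solution operators. Writing $g_h:=\PiZero f\in X_h$ and $r:=f-\PiZero f$, we have $u=J g_h+J r$ and $u_h=P_hJg_h+P_hJr$. Hence
\begin{align*}
  u-u_h=\bigl(z(g_h)-z_h^c(g_h)\bigr)+(I-P_h)Jr ,
\end{align*}
and the triangle inequality reduces the first bound in \eqref{eq:energy-main} to estimating the two pieces separately.

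For the piecewise constant part, Lemma~\ref{lem:hypercircle} applies directly. It gives
\begin{align*}
  |z(g_h)-z_h^c(g_h)|_{H^1(\Omega)}\le\kappa_{M,h}\|\PiZero f\|_{L^2(\Omega)}.
\end{align*}
This step already contains the regularity-free hypercircle argument based on the equilibrated CR--Marini flux of Proposition~\ref{prop:Marini}. No property of $\Omega$ beyond the Lipschitz polyhedral setting enters.

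For the oscillation part, the Ritz projection is energy-stable, so $|(I-P_h)Jr|_{H^1(\Omega)}\le|Jr|_{H^1(\Omega)}$. Testing the weak problem for $Jr$ with $Jr$ itself gives
\begin{align*}
  |Jr|_{H^1(\Omega)}^2=(r,Jr)\le\|r\|_{H^{-1}(\Omega)}|Jr|_{H^1(\Omega)},
\end{align*}
so $|Jr|_{H^1(\Omega)}\le\|r\|_{H^{-1}(\Omega)}$. Lemma~\ref{lem:osc} then bounds this by $C_{0,h}\|f-\PiZero f\|_{L^2(\Omega)}$. Adding the two estimates yields $|u-u_h|_{H^1(\Omega)}\le\mathfrak E_h(f)$.

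The second inequality in \eqref{eq:energy-main} is exactly \eqref{eq:Ehf-global}. It follows from Cauchy--Schwarz in $\R^2$ applied to the pairs $(C_{0,h},\kappa_{M,h})$ and $(\|f-\PiZero f\|,\|\PiZero f\|)$, combined with the orthogonal splitting $\|f\|^2=\|f-\PiZero f\|^2+\|\PiZero f\|^2$.

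There is no real obstacle in this argument. The only point needing care is the oscillation term: one must use the stability bound $|(I-P_h)w|_{H^1(\Omega)}\le|w|_{H^1(\Omega)}$ together with the $H^{-1}$ dual bound, rather than attempting any regularity-based estimate. This is also why neither convexity of $\Omega$ nor $H^2$-regularity is needed.
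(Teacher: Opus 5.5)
Your proof is correct and is essentially the paper's argument. The paper first bounds $|u-u_h|_{H^1(\Omega)}\le|u-z_h^c(\PiZero f)|_{H^1(\Omega)}$ by best approximation and then inserts $J\PiZero f$, whereas you split $u-u_h$ by linearity; both routes end with the same oscillation bound via Lemma~\ref{lem:osc} and the same hypercircle bound via Lemma~\ref{lem:hypercircle}. One small precision: $|(I-P_h)w|_{H^1(\Omega)}\le|w|_{H^1(\Omega)}$ follows from the Pythagorean identity $|w|_{H^1(\Omega)}^2=|P_hw|_{H^1(\Omega)}^2+|(I-P_h)w|_{H^1(\Omega)}^2$ given by Galerkin orthogonality, not merely from energy stability of $P_h$, which on its own would only give a factor $2$.
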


\begin{proof}
We set $f_h:=\PiZero f$, and let $\bar u\in V$ solve $-\varDelta\bar u=f_h$.  Let $\bar u_h:=z_h^c(f_h)\in V_h^c$. By the best-approximation property of the Ritz projection,
\begin{align*}
  |u-u_h|_{H^1(\Omega)}
  \le |u-\bar u_h|_{H^1(\Omega)}
  \le |u-\bar u|_{H^1(\Omega)}
     +|\bar u-\bar u_h|_{H^1(\Omega)}.
\end{align*}
Setting $q:=u-\bar u=J(f-f_h)$ yields
\begin{align*}
  a(q,v)=(f-f_h,v)
  \quad\forall v\in V.
\end{align*}
The Cauchy--Schwarz inequality for $a$ gives
\begin{align*}
  \|f-f_h\|_{H^{-1}(\Omega)}
  \le |q|_{H^1(\Omega)}.
\end{align*}
If $q=0$, equality is immediate. Otherwise, taking $v=q$ in the supremum defining the energy-dual norm gives the reverse inequality. Consequently,
\begin{align*}
  |u-\bar u|_{H^1(\Omega)}
  =\|f-f_h\|_{H^{-1}(\Omega)}.
\end{align*}
Lemma~\ref{lem:osc} therefore gives
\begin{align*}
  |u-\bar u|_{H^1(\Omega)}
  \le C_{0,h}\|f-f_h\|_{L^2(\Omega)}.
\end{align*}
Lemma~\ref{lem:hypercircle} gives
\begin{align*}
  |\bar u-\bar u_h|_{H^1(\Omega)}
  \le \kappa_{M,h}\|f_h\|_{L^2(\Omega)}.
\end{align*}
Therefore,
\begin{align*}
  |u-u_h|_{H^1(\Omega)}
  \le C_{0,h}\|f-f_h\|_{L^2(\Omega)}+\kappa_{M,h}\|f_h\|_{L^2(\Omega)}
  =\mathfrak E_h(f).
\end{align*}
As $\PiZero$ is the $L^2$-orthogonal projection,
\begin{align*}
  \|f\|_{L^2(\Omega)}^2=\|f-f_h\|_{L^2(\Omega)}^2+\|f_h\|_{L^2(\Omega)}^2.
\end{align*}
The Cauchy--Schwarz inequality applied to these two orthogonal components gives \eqref{eq:Ehf-global} and completes the proof.
\end{proof}

\begin{theorem}[$L^2$ a priori estimate]\label{thm:L2}
Under the assumptions of Theorem~\ref{thm:energy},
\begin{align}\label{eq:L2-main}
  \|u-u_h\|_{L^2(\Omega)}
  \le M_h|u-u_h|_{H^1(\Omega)}
  \le M_h\mathfrak E_h(f)
  \le M_h^2\|f\|_{L^2(\Omega)}.
\end{align}
Equivalently, because $f=-\varDelta u$,
\begin{align}\label{eq:L2-graph}
  \|u-u_h\|_{L^2(\Omega)}
  \le M_h^2\|\varDelta u\|_{L^2(\Omega)}
\end{align}
for any $u\in V$ such that $\varDelta u\in L^2(\Omega)$, where $u_h=P_hu$
and the Laplacian is understood in the distributional sense.
\end{theorem}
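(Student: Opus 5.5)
The plan is an Aubin--Nitsche duality argument in which the $H^2$-regularity of the dual problem is replaced by the computable energy estimate of Theorem~\ref{thm:energy}, applied to the dual problem.

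First set $e:=u-u_h\in V$. If $e=0$ there is nothing to prove, so assume $e\ne0$. Let $w:=Je\in V$ solve $a(w,v)=(e,v)$ for all $v\in V$, and let $w_h:=J_h^ce=P_hw\in V_h^c$ be its conforming Galerkin approximation.

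Test the dual problem with $v=e$ and use Galerkin orthogonality \eqref{eq:galerkin-orthogonality} with $v_h=w_h$:
\begin{align*}
  \|e\|_{L^2(\Omega)}^2=a(w,e)=a(w-w_h,e)\le|w-w_h|_{H^1(\Omega)}\,|e|_{H^1(\Omega)}.
\end{align*}
The regularity-dependent step of the classical argument is now handled by Theorem~\ref{thm:energy} with data $e\in L^2(\Omega)$ in place of $f$. This gives $|w-w_h|_{H^1(\Omega)}\le M_h\|e\|_{L^2(\Omega)}$, with no convexity or $H^2$ assumption. Dividing by $\|e\|_{L^2(\Omega)}$ yields the first inequality in \eqref{eq:L2-main}.

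The second and third inequalities in \eqref{eq:L2-main} follow directly from \eqref{eq:energy-main}, that is, $|e|_{H^1(\Omega)}\le\mathfrak E_h(f)\le M_h\|f\|_{L^2(\Omega)}$.

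An alternative route is Proposition~\ref{prop:operator-square}. Since $e=E_h^cf$, it gives $\|E_h^c\|_{\mathcal L(L^2,L^2)}=\|E_h^c\|_{\mathcal L(L^2,V)}^2\le M_h^2$, which confirms the operator bound. The duality argument, however, also produces the intermediate inequalities.

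For \eqref{eq:L2-graph}, take any $u\in V$ with $\varDelta u\in L^2(\Omega)$ and set $f:=-\varDelta u\in L^2(\Omega)$. Integration by parts against test functions in $C_c^\infty(\Omega)$, extended by density to $V$, shows that $u$ is the weak solution \eqref{eq:poisson} with this $f$. Hence $u_h=P_hu$ coincides with the Galerkin solution \eqref{eq:P1}, and the estimate follows from \eqref{eq:L2-main}.

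The only subtle point is justifying that Theorem~\ref{thm:energy} applies to the dual data $e$. This holds because $e\in V\subset L^2(\Omega)$ and the theorem is valid for arbitrary $L^2$ data. Nothing beyond that needs verifying.
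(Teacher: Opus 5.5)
Your proposal is correct and follows the paper's proof essentially verbatim: you pose the dual problem with data $e=u-u_h$, use Galerkin orthogonality, and apply Theorem~\ref{thm:energy} to the dual problem in place of $H^2$-regularity; the remaining inequalities are then read off from \eqref{eq:energy-main}. Your explicit justification of \eqref{eq:L2-graph}, via density of $C_c^\infty(\Omega)$ in $V$, fills in a step that the paper leaves implicit.
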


\begin{proof}
Let $e:=u-u_h\in V$, and let $w\in V$ solve
\begin{align}\label{eq:dual-weak}
  a(w,v)=(e,v)_{L^2(\Omega)}
  \quad\forall v\in V.
\end{align}
Because $P_hw\in V_h^c$ satisfies
\begin{align*}
  a(P_hw,v_h)=a(w,v_h)=(e,v_h)_{L^2(\Omega)}
  \quad\forall v_h\in V_h^c,
\end{align*}
it is the conforming Galerkin approximation of $w$. Therefore, Theorem~\ref{thm:energy}, applied to the Poisson problem with right-hand side $e$, gives
\begin{align}\label{eq:dual-energy-computable}
  |w-P_hw|_{H^1(\Omega)}
  \le \mathfrak E_h(e)
  \le M_h\|e\|_{L^2(\Omega)}.
\end{align}
Using the dual equation with $v=e$ and the Galerkin orthogonality $a(e,P_hw)=0$, we obtain
\begin{align*}
  \|e\|_{L^2(\Omega)}^2
  &=(e,e)_{L^2(\Omega)}
   =a(w,e) 
  =a(w-P_hw,e) \\
  &\le |w-P_hw|_{H^1(\Omega)}|e|_{H^1(\Omega)}.
\end{align*}
If $e=0$, the first inequality in \eqref{eq:L2-main} is immediate. Otherwise, using \eqref{eq:dual-energy-computable} and cancelling $\|e\|_{L^2(\Omega)}>0$ gives
\begin{align*}
  \|e\|_{L^2(\Omega)}\le M_h|e|_{H^1(\Omega)}.
\end{align*}
The remaining inequalities follow from Theorem~\ref{thm:energy}.
\end{proof}

\begin{corollary}[Diameter-explicit computable bound]
\label{cor:diameter-bound}
Under the assumptions of Theorem~\ref{thm:energy}, we set
\begin{align}\label{eq:Mhat-def}
  \widehat C_{0,h}:=\frac{h}{\pi},
  \quad
  \widehat M_h
  :=\sqrt{\widehat C_{0,h}^{\,2}+\kappa_{M,h}^2},
\end{align}
and
\begin{align}\label{eq:Ehat-def}
  \widehat{\mathfrak E}_h(f)
  :=
  \widehat C_{0,h}
  \|f-\PiZero f\|_{L^2(\Omega)}
  +
  \kappa_{M,h}
  \|\PiZero f\|_{L^2(\Omega)}.
\end{align}
Then,
\begin{equation}\label{eq:explicit-diameter-bound}
\begin{aligned}
  |u-u_h|_{H^1(\Omega)}
  &\le \widehat{\mathfrak E}_h(f)
  \le \widehat M_h\|f\|_{L^2(\Omega)},\\
  \|u-u_h\|_{L^2(\Omega)}
  &\le
  \widehat M_h\widehat{\mathfrak E}_h(f)
  \le
  \widehat M_h^2\|f\|_{L^2(\Omega)}.
\end{aligned}
\end{equation}
\end{corollary}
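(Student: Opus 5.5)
The plan is to treat this as a monotonicity argument: every estimate in Theorems~\ref{thm:energy} and~\ref{thm:L2} depends on $C_{0,h}$ only through nondecreasing expressions. So we replace $C_{0,h}$ by the larger quantity $\widehat C_{0,h}=h/\pi$, using the Payne--Weinberger bound \eqref{eq:C0h-diam}. The key point is that \eqref{eq:C0h-diam} gives $C_{0,h}\le\widehat C_{0,h}$. Since $\|f-\PiZero f\|_{L^2(\Omega)}\ge0$ and all other quantities are unchanged, this yields
\begin{align*}
  \mathfrak E_h(f)\le\widehat{\mathfrak E}_h(f),
  \quad
  M_h=\sqrt{C_{0,h}^2+\kappa_{M,h}^2}\le\sqrt{\widehat C_{0,h}^{\,2}+\kappa_{M,h}^2}=\widehat M_h .
\end{align*}

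For the energy line, I combine the first inequality of \eqref{eq:energy-main} with $\mathfrak E_h(f)\le\widehat{\mathfrak E}_h(f)$. The bound $\widehat{\mathfrak E}_h(f)\le\widehat M_h\|f\|_{L^2(\Omega)}$ then follows exactly as \eqref{eq:Ehf-global} did. Writing $f=(f-\PiZero f)+\PiZero f$ as an $L^2$-orthogonal splitting, we have
\begin{align*}
  \|f\|_{L^2(\Omega)}^2=\|f-\PiZero f\|_{L^2(\Omega)}^2+\|\PiZero f\|_{L^2(\Omega)}^2 .
\end{align*}
Applying Cauchy--Schwarz in $\R^2$ to $(\widehat C_{0,h},\kappa_{M,h})$ and $(\|f-\PiZero f\|,\|\PiZero f\|)$ gives the claim.

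For the $L^2$ line, I use the chain in \eqref{eq:L2-main}: $\|u-u_h\|_{L^2(\Omega)}\le M_h\mathfrak E_h(f)$. Both factors are nonnegative and each is dominated by its hatted counterpart, so the product satisfies $M_h\mathfrak E_h(f)\le\widehat M_h\widehat{\mathfrak E}_h(f)$. The previous step then gives $\widehat M_h\widehat{\mathfrak E}_h(f)\le\widehat M_h^2\|f\|_{L^2(\Omega)}$.

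There is no real obstacle here. The only point needing care is that the local convexity of each simplex, not convexity of $\Omega$, is what justifies \eqref{eq:C0h-diam}, and this was already noted after that inequality.
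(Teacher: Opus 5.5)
Your proposal is correct and follows the paper's proof essentially step for step. The Payne--Weinberger bound gives $C_{0,h}\le\widehat C_{0,h}$, hence $\mathfrak E_h(f)\le\widehat{\mathfrak E}_h(f)$ and $M_h\le\widehat M_h$; the $L^2$-orthogonality of $\PiZero$ with Cauchy--Schwarz, together with Theorems~\ref{thm:energy} and~\ref{thm:L2}, then gives both lines of \eqref{eq:explicit-diameter-bound}, with your nonnegativity remark justifying the product comparison $M_h\mathfrak E_h(f)\le\widehat M_h\widehat{\mathfrak E}_h(f)$.
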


\begin{proof}
The Payne--Weinberger estimate \eqref{eq:C0h-diam} gives
\begin{align*}
  C_{0,h}
  \le
  \max_{T\in\mathbb T_h}\frac{h_T}{\pi}
  =
  \frac{h}{\pi}
  =
  \widehat C_{0,h}.
\end{align*}
Therefore,
\begin{align*}
  \mathfrak E_h(f)\le\widehat{\mathfrak E}_h(f),
  \quad
  M_h\le\widehat M_h.
\end{align*}
The $L^2$-orthogonality of $\PiZero$ and the Cauchy--Schwarz inequality applied to the two orthogonal components give
\begin{align*}
  \widehat{\mathfrak E}_h(f)
  \le\widehat M_h\|f\|_{L^2(\Omega)}.
\end{align*}
Theorem~\ref{thm:energy} therefore yields
\begin{align*}
  |u-u_h|_{H^1(\Omega)}
  \le\mathfrak E_h(f)
  \le\widehat{\mathfrak E}_h(f)
  \le\widehat M_h\|f\|_{L^2(\Omega)}.
\end{align*}
Similarly, Theorem~\ref{thm:L2} gives
\begin{align*}
  \|u-u_h\|_{L^2(\Omega)}
  &\le M_h\mathfrak E_h(f) 
  \le\widehat M_h\widehat{\mathfrak E}_h(f) 
  \le\widehat M_h^2\|f\|_{L^2(\Omega)}.
\end{align*}
This proves \eqref{eq:explicit-diameter-bound}.
\end{proof}

\begin{remark}[Role of elliptic regularity]
\label{rem:regularity}
The proof of Theorem~\ref{thm:L2} uses a dual Poisson problem but no $H^2$-regularity estimate for its solution. The same mesh-dependent bound $M_h$ controls the energy-operator norm of the primal and dual Ritz errors. Proposition~\ref{prop:operator-square} shows that the $L^2$-norm of the Ritz error operator is the square of its energy-operator norm, explaining the squared constant in the hypercircle-based estimates of \citet{LiuOishi2013} and \citet{TakayasuLiuOishi2014}.
\end{remark}

\begin{remark}[Piecewise constant data]
\label{rem:P0data}
If $f\in X_h$, the primal oscillation term vanishes and
\begin{equation}\label{eq:P0-special}
\begin{aligned}
  |u-u_h|_{H^1(\Omega)}
  &\le \kappa_{M,h}\|f\|_{L^2(\Omega)},\\
  \|u-u_h\|_{L^2(\Omega)}
  &\le M_h\kappa_{M,h}\|f\|_{L^2(\Omega)}.
\end{aligned}
\end{equation}
The dual factor in the second estimate is $M_h$ rather than $\kappa_{M,h}$, because the right-hand side of the dual problem is $u-u_h$, which need not belong to $X_h$. This is the setting in which the primal CR--Marini estimate appears without an additional restriction on the dual right-hand side.
\end{remark}

\subsection{The Crouzeix--Raviart approximation}\label{subsec:CR-L2}
We now use the same CR space to bound the nonconforming solution in the broken energy norm and in $L^2$. For $f\in L^2(\Omega)$, let $u_h^{\CR}\in V_{h,0}^{\CR}$ solve
\begin{align}\label{eq:CR-primal}
  a_h(u_h^{\CR},v_h)
  :=(\gradH u_h^{\CR},\gradH v_h)
  =(f,v_h)
  \quad\forall v_h\in V_{h,0}^{\CR}.
\end{align}
Let
\begin{align*}
  J_h^{\CR}:L^2(\Omega)\to V_{h,0}^{\CR},
  \quad J_h^{\CR}f:=u_h^{\CR},
\end{align*}
and recall the conforming Galerkin solution operator $J_h^c=P_hJ:L^2(\Omega)\to V_h^c$. Because $V_h^c\subset V_{h,0}^{\CR}$, we may regard $J_h^c$ as an operator from $L^2(\Omega)$ into $V_{h,0}^{\CR}$. We then define the CR--conforming gap operator
\begin{align}\label{eq:Dh-def}
  D_h:=J_h^{\CR}-J_h^c:
  L^2(\Omega)\to V_{h,0}^{\CR}.
\end{align}

\begin{proposition}[Square identity for the CR--conforming gap]
\label{prop:CR-gap-square}
For any $f,g\in L^2(\Omega)$,
\begin{align}\label{eq:CR-gap-square}
  (D_hf,g)=a_h(D_hf,D_hg).
\end{align}
Consequently, $D_h$, viewed as an operator on $L^2(\Omega)$, is positive and self-adjoint. Furthermore,
\begin{align}\label{eq:CR-gap-energy-bound}
  |D_hf|_{H^1(\mathbb T_h)}
  \le
  C_{0,h}\|f-\PiZero f\|_{L^2(\Omega)}
  +\kappa_{M,h}\|\PiZero f\|_{L^2(\Omega)}
  \le M_h\|f\|_{L^2(\Omega)},
\end{align}
and
\begin{align}\label{eq:CR-gap-L2-bound}
  \|D_h\|_{\cL(L^2(\Omega),L^2(\Omega))}
  \le M_h^2 .
\end{align}
\end{proposition}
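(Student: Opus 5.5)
The plan is to derive everything from two orthogonality facts about the CR and conforming discrete solutions, and then to split $f$ into its piecewise constant part and the oscillation $f-\PiZero f$.

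\textbf{Facts used.}
\begin{itemize}
\item[(a)] For every $g\in L^2(\Omega)$ and every $v_h\in V_{h,0}^{\CR}$, $a_h(J_h^{\CR}g,v_h)=(g,v_h)$. This is the definition \eqref{eq:CR-primal}.
\item[(b)] Since $V_h^c\subset V_{h,0}^{\CR}$, both $J_h^{\CR}f$ and $J_h^cf$ satisfy the same equation when tested against $V_h^c$. Hence $a_h(D_hf,w_h)=0$ for all $w_h\in V_h^c$. This is the same argument as at the start of the proof of Proposition~\ref{prop:defect-decomp}.
\end{itemize}

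\textbf{Square identity \eqref{eq:CR-gap-square}.} Since $D_hf\in V_{h,0}^{\CR}$, fact (a) applied to $g$ gives $(D_hf,g)=a_h(J_h^{\CR}g,D_hf)$. Write $J_h^{\CR}g=D_hg+J_h^cg$. By fact (b), the term $a_h(J_h^cg,D_hf)$ vanishes, so $(D_hf,g)=a_h(D_hg,D_hf)$. Symmetry of $a_h$ then gives self-adjointness, and $g=f$ gives $(D_hf,f)=|D_hf|_{H^1(\mathbb T_h)}^2\ge0$. Boundedness on $L^2(\Omega)$ is automatic because $D_h$ has finite-dimensional range; it also follows from the energy bound below together with a discrete Poincaré inequality.

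\textbf{Energy bound \eqref{eq:CR-gap-energy-bound}.} By linearity, $D_hf=D_hf_h+D_hr$ with $f_h:=\PiZero f$ and $r:=f-f_h$.

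For the piecewise constant part, $D_hf_h=z_h^{\CR}(f_h)-z_h^c(f_h)$. Proposition~\ref{prop:defect-decomp} shows that its broken energy squared is bounded by the full Marini defect squared. The definition of $\kappa_{M,h}$ therefore gives $|D_hf_h|_{H^1(\mathbb T_h)}\le\kappa_{M,h}\|f_h\|_{L^2(\Omega)}$.

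For the oscillation part I would apply the square identity with $f=g=r$:
\begin{align*}
  |D_hr|_{H^1(\mathbb T_h)}^2=(D_hr,r)=\sum_{T\in\mathbb T_h}(D_hr-\Pi_T^0D_hr,r)_T .
\end{align*}
The second equality holds because $r$ has zero mean on every element. Since $D_hr|_T\in H^1(T)$, the definition \eqref{eq:C0T} of $C_0(T)$ applies elementwise, and Cauchy--Schwarz over the mesh gives $|D_hr|_{H^1(\mathbb T_h)}^2\le C_{0,h}|D_hr|_{H^1(\mathbb T_h)}\|r\|_{L^2(\Omega)}$. Cancelling the common factor (the case $D_hr=0$ is trivial) yields $|D_hr|_{H^1(\mathbb T_h)}\le C_{0,h}\|r\|_{L^2(\Omega)}$.

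The triangle inequality gives the first inequality in \eqref{eq:CR-gap-energy-bound}. The second is \eqref{eq:Ehf-global}, which rests on the $L^2$-orthogonality of $f_h$ and $r$.

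\textbf{$L^2$ operator bound \eqref{eq:CR-gap-L2-bound}.} As in Proposition~\ref{prop:operator-square}, $D_h$ is bounded, positive and self-adjoint, so its norm equals its numerical radius:
\begin{align*}
  \|D_h\|_{\cL(L^2(\Omega),L^2(\Omega))}
  =\sup_{f\ne0}\frac{(D_hf,f)}{\|f\|_{L^2(\Omega)}^2}
  =\sup_{f\ne0}\frac{|D_hf|_{H^1(\mathbb T_h)}^2}{\|f\|_{L^2(\Omega)}^2}
  \le M_h^2 .
\end{align*}

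The step that needs the most care is the oscillation estimate. It works only because the square identity converts the broken energy of $D_hr$ into an $L^2$ pairing with $r$. After that, the elementwise Poincaré constant applies directly to the broken CR function, without any conformity and without a nonconforming Aubin--Nitsche argument.
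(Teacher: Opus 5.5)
Your proposal is correct and follows the paper's proof essentially step by step: you use the orthogonality $a_h(D_hf,v_h^c)=0$ on $V_h^c$ to get the square identity, handle the splitting $f=\PiZero f+(f-\PiZero f)$ via the defect decomposition for the piecewise constant part and via the square identity plus the elementwise Poincar\'e constant for the oscillation, and obtain the $L^2$ bound from the Rayleigh-quotient characterisation of the norm of a positive self-adjoint operator. One small caveat: finite-dimensional range does not by itself make a linear operator bounded (what does is the $L^2$-continuity of the load functionals $f\mapsto(f,v_h)$), so your second justification, via the energy bound and norm equivalence on $V_{h,0}^{\CR}$, is the one to rely on.
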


\begin{proof}
For any $v_h^c\in V_h^c$, the two discrete equations and $V_h^c\subset V_{h,0}^{\CR}$ imply
\begin{align}\label{eq:Dh-orthogonality}
  a_h(D_hf,v_h^c)=0.
\end{align}
Using the CR equation with function $g$,
\begin{align*}
  (D_hf,g)
  =a_h(D_hf,J_h^{\CR}g)
  =a_h(D_hf,D_hg),
\end{align*}
because $J_h^cg\in V_h^c$. This proves \eqref{eq:CR-gap-square}. Taking $g=f$ gives
\begin{align*}
  (D_hf,f)=|D_hf|_{H^1(\mathbb T_h)}^2\ge0.
\end{align*}
The symmetry of $a_h$ in turn gives
\begin{align*}
  (D_hf,g)
  =a_h(D_hf,D_hg)
  =a_h(D_hg,D_hf)
  =(f,D_hg),
\end{align*}
hence $D_h$ is positive and self-adjoint. By well-posedness and finite dimensionality, $J_h^{\CR}$, $J_h^c$, and $D_h$ are bounded as $L^2(\Omega)\to L^2(\Omega)$ operators.

We set $f_h:=\PiZero f$ and $r:=f-f_h$. For the piecewise constant part,
\begin{align*}
  D_hf_h=z_h^{\CR}(f_h)-z_h^c(f_h).
\end{align*}
The exact defect decomposition gives
\begin{align*}
  |D_hf_h|_{H^1(\mathbb T_h)}
  \le
  \|\bm{\sigma}_h^M(f_h)-\nabla z_h^c(f_h)\|_{L^2(\Omega)^d}
  \le \kappa_{M,h}\|f_h\|_{L^2(\Omega)}.
\end{align*}
For the oscillatory part, \eqref{eq:CR-gap-square} gives
\begin{align*}
  |D_hr|_{H^1(\mathbb T_h)}^2=(r,D_hr).
\end{align*}
Because $r$ has zero mean on every element,
\begin{align*}
\begin{aligned}
  (r,D_hr)
  &=
  \sum_{T\in\mathbb{T}_h}(r,D_hr-\Pi_T^0D_hr)_T
  \le C_{0,h}\|r\|_{L^2(\Omega)}\,
  |D_hr|_{H^1(\mathbb T_h)}.
\end{aligned}
\end{align*}
If $D_hr=0$, the desired estimate is immediate. Otherwise, division by $|D_hr|_{H^1(\mathbb T_h)}>0$ gives
\begin{align*}
  |D_hr|_{H^1(\mathbb T_h)}\le C_{0,h}\|r\|_{L^2(\Omega)}.
\end{align*}
By linearity and the triangle inequality,
\begin{align*}
  |D_hf|_{H^1(\mathbb T_h)}
  \le C_{0,h}\|r\|_{L^2(\Omega)}+\kappa_{M,h}\|f_h\|_{L^2(\Omega)}.
\end{align*}
Because $\PiZero$ is the $L^2$-orthogonal projection, the Cauchy--Schwarz inequality applied to the two orthogonal components yields \eqref{eq:CR-gap-energy-bound}.

Finally, positivity and self-adjointness give
\begin{align*}
  \|D_h\|_{\cL(L^2(\Omega),L^2(\Omega))}
  =
  \sup_{0\ne f\in L^2(\Omega)}
  \frac{(D_hf,f)}{\|f\|_{L^2(\Omega)}^2}
  =
  \sup_{0\ne f\in L^2(\Omega)}
  \frac{|D_hf|_{H^1(\mathbb T_h)}^2}{\|f\|_{L^2(\Omega)}^2}
  \le M_h^2.
\end{align*}
\end{proof}

\begin{theorem}[Energy and $L^2$ a priori estimates for the CR solution]
\label{thm:CR-L2}
Let $f\in L^2(\Omega)$, let $u=Jf\in V$ solve \eqref{eq:poisson}, and let $u_h^{\CR}=J_h^{\CR}f$ solve \eqref{eq:CR-primal}. Then,
\begin{align}\label{eq:CR-energy-main}
  |u-u_h^{\CR}|_{H^1(\mathbb T_h)}
  \le 2\mathfrak E_h(f)
  \le 2M_h\|f\|_{L^2(\Omega)},
\end{align}
and
\begin{align}\label{eq:CR-L2-main}
  \|u-u_h^{\CR}\|_{L^2(\Omega)}
  \le M_h^2\|f\|_{L^2(\Omega)}.
\end{align}
These bounds hold on any non-degenerate conforming simplicial mesh and require neither convexity of $\Omega$ nor global $H^2$-regularity.
\end{theorem}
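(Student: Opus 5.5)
The plan is to split the CR error through the conforming Galerkin solution:
\[
u-u_h^{\CR}=(u-u_h)-D_hf,
\]
with $u_h=J_h^cf$ and $D_h$ the gap operator from \eqref{eq:Dh-def}.

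\textbf{Energy bound.} The triangle inequality in the broken seminorm gives $|u-u_h^{\CR}|_{H^1(\mathbb T_h)}\le|u-u_h|_{H^1(\Omega)}+|D_hf|_{H^1(\mathbb T_h)}$. Since $u-u_h\in V$, its broken seminorm equals its global one. Theorem~\ref{thm:energy} bounds the first term by $\mathfrak E_h(f)$, and \eqref{eq:CR-gap-energy-bound} bounds the second by the same quantity. Together these give \eqref{eq:CR-energy-main}, and \eqref{eq:Ehf-global} then supplies the factor $2M_h\|f\|_{L^2(\Omega)}$.

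\textbf{$L^2$ bound.} The triangle inequality alone would only give $2M_h^2$, so we use operator structure instead. Write $E^{\CR}:=J-J_h^{\CR}=E_h^c-D_h$, where both $E_h^c$ and $D_h$ are positive self-adjoint on $L^2(\Omega)$ (Propositions~\ref{prop:operator-square} and~\ref{prop:CR-gap-square}). The key claim is that $E^{\CR}$ is itself positive, i.e.\ $0\le D_h\le E_h^c$, or equivalently $J_h^{\CR}\le J$ in the quadratic-form sense.

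To prove this, compute $(J_h^{\CR}f,f)=|J_h^{\CR}f|_{H^1(\mathbb T_h)}^2$ and $(Jf,f)=|Jf|_{H^1(\Omega)}^2$. Use the dual characterisation
\[
(J_h^{\CR}f,f)=\sup_{v_h\in V_{h,0}^{\CR}}\bigl(2(f,v_h)-|v_h|^2_{H^1(\mathbb T_h)}\bigr),
\]
and compare it with the continuous version. Comparing the two suprema requires a link between $V_{h,0}^{\CR}$ and $V$. This is where a Marini/RT-type argument enters: for $v_h\in V_{h,0}^{\CR}$, we would bound $(f,v_h)$ by the continuous energy. The cleaner route is to show $(E^{\CR}f,f)\ge0$ directly. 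This step is \emph{the main obstacle}, since for general $f$ nonconforming CR is not obviously monotone. If this positivity fails in general, we fall back on another route.

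\textbf{Fallback: a two-sided sandwich.} Instead of positivity of $E^{\CR}$, use
\[
-D_h\le E^{\CR}\le E_h^c .
\]
The upper inequality holds because $D_h\ge0$. For the lower one, $E^{\CR}=E_h^c-D_h\ge -D_h$ because $E_h^c\ge0$. For a self-adjoint operator $A$ with $-B\le A\le C$, where $B,C\ge0$, we have
\[
\|A\|\le\max(\|B\|,\|C\|).
\]
Applying this with $\|E_h^c\|=\|E_h^c\|_{\mathcal L(L^2,V)}^2\le M_h^2$ (from Proposition~\ref{prop:operator-square} and Theorem~\ref{thm:energy}) and $\|D_h\|\le M_h^2$ (from \eqref{eq:CR-gap-L2-bound}) gives $\|E^{\CR}\|_{\mathcal L(L^2,L^2)}\le M_h^2$, which is \eqref{eq:CR-L2-main}.

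\textbf{Self-adjointness check.} $E^{\CR}$ is self-adjoint on $L^2(\Omega)$ as a difference of self-adjoint operators, so its norm equals $\sup|(E^{\CR}f,f)|/\|f\|^2$. This is exactly what the sandwich argument requires, and this fallback route avoids any positivity of $E^{\CR}$ or any Aubin--Nitsche step.
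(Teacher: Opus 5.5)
Your energy bound and your fallback sandwich $-D_h\le E^{\CR}\le E_h^c$ are exactly the paper's proof, so the proposal is correct. The sandwich gives $\|E^{\CR}\|_{\cL(L^2(\Omega),L^2(\Omega))}\le\max\{\|D_h\|,\|E_h^c\|\}\le M_h^2$ through the absolute Rayleigh quotient of a self-adjoint operator. You should drop the positivity detour $0\le D_h\le E_h^c$ entirely: it is not needed, and it is false in general, since it would force $\|J_h^{\CR}\|\le\|J\|$, i.e.\ the first CR eigenvalue to bound the first Dirichlet eigenvalue from above, whereas CR eigenvalues are known to be asymptotic lower bounds, e.g.\ on the L-shaped domain.
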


\begin{proof}
Let $u_h^c:=J_h^cf$. By the definition of $D_h$,
\begin{align*}
  D_hf=u_h^{\CR}-u_h^c.
\end{align*}
Because $u-u_h^c\in V$, its broken and global energy seminorms coincide. The triangle inequality, Theorem~\ref{thm:energy}, and \eqref{eq:CR-gap-energy-bound} therefore give
\begin{align*}
  |u-u_h^{\CR}|_{H^1(\mathbb T_h)}
  &\le |u-u_h^c|_{H^1(\Omega)}
       +|D_hf|_{H^1(\mathbb T_h)} \\
  &\le 2\mathfrak E_h(f)
  \le 2M_h\|f\|_{L^2(\Omega)}.
\end{align*}
This proves \eqref{eq:CR-energy-main}.

Recall $E_h^c=J-J_h^c$, and we set
\begin{align*}
  E_h^{\CR}:=J-J_h^{\CR}.
\end{align*}
Proposition~\ref{prop:operator-square} shows that $E_h^c$ is positive and self-adjoint and that
\begin{align*}
  \|E_h^c\|_{\cL(L^2(\Omega),L^2(\Omega))}
  =
  \|E_h^c\|_{\cL(L^2(\Omega),V)}^2.
\end{align*}
Furthermore, Theorem~\ref{thm:energy} and \eqref{eq:Ehf-global} give
\begin{align*}
  \|E_h^c\|_{\cL(L^2(\Omega),V)}
  \le M_h.
\end{align*}
Therefore,
\begin{align*}
  \|E_h^c\|_{\cL(L^2(\Omega),L^2(\Omega))}
  \le M_h^2.
\end{align*}
By Proposition~\ref{prop:CR-gap-square}, $D_h$ is also positive and self-adjoint and
\begin{align*}
  \|D_h\|_{\cL(L^2(\Omega),L^2(\Omega))}\le M_h^2.
\end{align*}
The identity
\begin{align*}
  E_h^{\CR}=E_h^c-D_h,
\end{align*}
shows that $E_h^{\CR}$ is bounded and self-adjoint. For any $g\in L^2(\Omega)$, positivity and the preceding operator-norm bounds give
\begin{align*}
  0
  &\le (E_h^cg,g)
  \le M_h^2\|g\|_{L^2(\Omega)}^2,\\
  0
  &\le (D_hg,g)
  \le M_h^2\|g\|_{L^2(\Omega)}^2.
\end{align*}
Indeed, for either $A=E_h^c$ or $A=D_h$, positivity and the
Cauchy--Schwarz inequality give
\begin{align*}
  0\le (Ag,g)
  \le \|A\|_{\cL(L^2(\Omega),L^2(\Omega))}
       \|g\|_{L^2(\Omega)}^2
  \le M_h^2\|g\|_{L^2(\Omega)}^2.
\end{align*}
Consequently,
\begin{align*}
\begin{aligned}
 |(E_h^{\CR}g,g)|
 &=
 |(E_h^cg,g)-(D_hg,g)| 
 \le
 \max\{(E_h^cg,g),(D_hg,g)\} 
 \le M_h^2\|g\|_{L^2(\Omega)}^2.
\end{aligned}
\end{align*}
The norm of a bounded self-adjoint operator equals the supremum of the absolute Rayleigh quotient. Therefore,
\begin{align*}
  \|E_h^{\CR}\|_{\cL(L^2(\Omega),L^2(\Omega))}\le M_h^2.
\end{align*}
Finally, because $u-u_h^{\CR}=E_h^{\CR}f$,
\begin{align*}
  \|u-u_h^{\CR}\|_{L^2(\Omega)}
  &\le
  \|E_h^{\CR}\|_{\cL(L^2(\Omega),L^2(\Omega))}
  \|f\|_{L^2(\Omega)} 
  \le M_h^2\|f\|_{L^2(\Omega)}.
\end{align*}
This proves \eqref{eq:CR-L2-main}.
\end{proof}

\begin{corollary}[Refined CR $L^2$ bound]
\label{cor:CR-refined}
Under the assumptions of Theorem~\ref{thm:CR-L2},
\begin{align}\label{eq:CR-refined}
  \|u-u_h^{\CR}\|_{L^2(\Omega)}
  \le
  \min\left\{
    M_h^2\|f\|_{L^2(\Omega)},
    \;2M_h\mathfrak E_h(f)
  \right\}.
\end{align}
\end{corollary}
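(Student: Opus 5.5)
The first term of the minimum is exactly \eqref{eq:CR-L2-main} from Theorem~\ref{thm:CR-L2}, so only
\begin{align*}
  \|u-u_h^{\CR}\|_{L^2(\Omega)}\le 2M_h\mathfrak E_h(f)
\end{align*}
needs proof. I would combine the operator splitting $E_h^{\CR}=E_h^c-D_h$ from the proof of Theorem~\ref{thm:CR-L2} with the triangle inequality:
\begin{align*}
  \|u-u_h^{\CR}\|_{L^2(\Omega)}
  \le \|E_h^cf\|_{L^2(\Omega)}+\|D_hf\|_{L^2(\Omega)}.
\end{align*}
The first term is handled by Theorem~\ref{thm:L2}, which gives $\|E_h^cf\|_{L^2(\Omega)}\le M_h\mathfrak E_h(f)$ directly.

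For the second term, the goal is the analogous bound $\|D_hf\|_{L^2(\Omega)}\le M_h|D_hf|_{H^1(\mathbb T_h)}$. Combined with \eqref{eq:CR-gap-energy-bound}, i.e.\ $|D_hf|_{H^1(\mathbb T_h)}\le\mathfrak E_h(f)$, this yields $M_h\mathfrak E_h(f)$, and adding the two pieces gives $2M_h\mathfrak E_h(f)$.

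To prove that discrete duality inequality, I would set $e:=D_hf$ and use the square identity \eqref{eq:CR-gap-square} with $g=e$:
\begin{align*}
  \|e\|_{L^2(\Omega)}^2=(D_hf,e)=a_h(D_hf,D_he)
  \le |D_hf|_{H^1(\mathbb T_h)}\,|D_he|_{H^1(\mathbb T_h)}.
\end{align*}
Applying \eqref{eq:CR-gap-energy-bound} to the right-hand side $e$ gives $|D_he|_{H^1(\mathbb T_h)}\le M_h\|e\|_{L^2(\Omega)}$. Cancelling $\|e\|_{L^2(\Omega)}$, or noting the case $e=0$ is trivial, then gives $\|D_hf\|_{L^2(\Omega)}\le M_h|D_hf|_{H^1(\mathbb T_h)}$.

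This is the direct analogue of the dual argument in Theorem~\ref{thm:L2}, with the gap operator replacing the Ritz error. The step needing most care is this CR--conforming gap duality. It works only because the square identity holds for arbitrary $L^2$ data $g$, so the non-piecewise-constant right-hand side $e$ is admissible.

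An alternative would be a direct Aubin--Nitsche argument for $E_h^{\CR}$. I would avoid it, because it introduces consistency terms that the splitting sidesteps. Taking the minimum of the two bounds completes the proof.
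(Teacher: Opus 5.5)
Your proof is correct, and its outline coincides with the paper's: the splitting $E_h^{\CR}=E_h^c-D_h$, the triangle inequality, Theorem~\ref{thm:L2} for the conforming part, and reducing the gap part to $\|D_hf\|_{L^2(\Omega)}\le M_h|D_hf|_{H^1(\mathbb T_h)}$ combined with \eqref{eq:CR-gap-energy-bound}.

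The only difference is how you obtain that last inequality. The paper argues spectrally. Since $D_h$ is positive and self-adjoint with $\|D_h\|_{\cL(L^2(\Omega),L^2(\Omega))}\le M_h^2$ by \eqref{eq:CR-gap-L2-bound}, one has $\lambda^2\le\|D_h\|\lambda$ on its spectrum. Hence $\|D_hf\|_{L^2(\Omega)}^2=(D_h^2f,f)\le M_h^2(D_hf,f)=M_h^2|D_hf|_{H^1(\mathbb T_h)}^2$.

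You instead run a discrete duality argument. You test the square identity \eqref{eq:CR-gap-square} with $g=D_hf$, apply Cauchy--Schwarz in $a_h$, and bound $|D_h(D_hf)|_{H^1(\mathbb T_h)}$ by the energy estimate \eqref{eq:CR-gap-energy-bound} applied to the non-piecewise-constant datum $D_hf$.

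Your route avoids spectral calculus and never needs the operator-norm bound \eqref{eq:CR-gap-L2-bound}. It also makes the parallel with the conforming duality argument of Theorem~\ref{thm:L2} explicit. The paper's route is shorter once positivity, self-adjointness and \eqref{eq:CR-gap-L2-bound} are already established. Both ultimately use only \eqref{eq:CR-gap-square} and \eqref{eq:CR-gap-energy-bound}, and they give the same constant $2M_h$.
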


\begin{proof}
The first bound is Theorem~\ref{thm:CR-L2}. For the second one, the conforming estimate gives
\begin{align*}
  \|E_h^cf\|_{L^2(\Omega)}
  \le M_h|E_h^cf|_{H^1(\Omega)}
  \le M_h\mathfrak E_h(f).
\end{align*}
Because $D_h$ is positive and self-adjoint and $\|D_h\|_{\cL(L^2,L^2)}\le M_h^2$, its spectrum is contained in $[0,\|D_h\|]$. Therefore, $\lambda^2\le\|D_h\|\lambda$ on the spectrum, and thus
\begin{align*}
  (D_h^2f,f)
  \le \|D_h\|(D_hf,f)
  \quad\forall f\in L^2(\Omega).
\end{align*}
This inequality and \eqref{eq:CR-gap-square} yield
\begin{align*}
  \|D_hf\|_{L^2(\Omega)}^2
  &=(D_h^2f,f)
  \le M_h^2(D_hf,f)
  =M_h^2|D_hf|_{H^1(\mathbb T_h)}^2
  \le M_h^2\mathfrak E_h(f)^2.
\end{align*}
Because $E_h^{\CR}=E_h^c-D_h$, the triangle inequality gives
\begin{align*}
  \|E_h^{\CR}f\|_{L^2(\Omega)}
  \le 2M_h\mathfrak E_h(f).
\end{align*}
Combining the two valid bounds proves \eqref{eq:CR-refined}.
\end{proof}

\begin{corollary}[Diameter-explicit CR bounds]\label{cor:CR-diameter}
Under the assumptions of Theorem~\ref{thm:CR-L2}, and with $\widehat{\mathfrak E}_h$ and $\widehat M_h$ from Corollary~\ref{cor:diameter-bound},
\begin{align}\label{eq:CR-explicit-energy}
  |u-u_h^{\CR}|_{H^1(\mathbb T_h)}
  \le 2\widehat{\mathfrak E}_h(f)
  \le 2\widehat M_h\|f\|_{L^2(\Omega)},
\end{align}
and
\begin{align}\label{eq:CR-explicit}
  \|u-u_h^{\CR}\|_{L^2(\Omega)}
  \le
  \min\left\{
    \widehat M_h^2\|f\|_{L^2(\Omega)},
    \;2\widehat M_h\widehat{\mathfrak E}_h(f)
  \right\}.
\end{align}
\end{corollary}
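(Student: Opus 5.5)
The plan is to obtain Corollary~\ref{cor:CR-diameter} directly from Theorem~\ref{thm:CR-L2} and Corollary~\ref{cor:CR-refined}. The only extra ingredient is the monotone replacement of $C_{0,h}$ by $\widehat C_{0,h}=h/\pi$, which was already set up in the proof of Corollary~\ref{cor:diameter-bound}. I will not revisit any operator argument.

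First, I recall from the Payne--Weinberger bound \eqref{eq:C0h-diam} that $C_{0,h}\le\widehat C_{0,h}$. Both $\mathfrak E_h(f)$ and $M_h$ are nondecreasing in $C_{0,h}$, with $\kappa_{M,h}$ and $f$ held fixed. It follows that
\begin{align*}
  \mathfrak E_h(f)\le\widehat{\mathfrak E}_h(f),
  \quad
  M_h\le\widehat M_h.
\end{align*}
As already shown in Corollary~\ref{cor:diameter-bound}, the $L^2$-orthogonality of $\PiZero$ and the Cauchy--Schwarz inequality on the two orthogonal components give $\widehat{\mathfrak E}_h(f)\le\widehat M_h\|f\|_{L^2(\Omega)}$.

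Second, for \eqref{eq:CR-explicit-energy}, I insert these inequalities into \eqref{eq:CR-energy-main}. This gives $|u-u_h^{\CR}|_{H^1(\mathbb T_h)}\le2\mathfrak E_h(f)\le2\widehat{\mathfrak E}_h(f)\le2\widehat M_h\|f\|_{L^2(\Omega)}$.

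Third, for \eqref{eq:CR-explicit}, I use Corollary~\ref{cor:CR-refined}. Each of its two bounds is majorised termwise: $M_h^2\le\widehat M_h^2$, and $M_h\mathfrak E_h(f)\le\widehat M_h\widehat{\mathfrak E}_h(f)$, which holds because all factors are non-negative. The minimum of the two original bounds is therefore at most the minimum of the two hatted bounds, which proves \eqref{eq:CR-explicit}.

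There is no real obstacle here. The one point to state explicitly is that the comparison is monotone: every quantity involved is non-negative, so products and minima preserve the inequalities.
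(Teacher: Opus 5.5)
Your proposal is correct and follows the paper's proof exactly: you use $C_{0,h}\le h/\pi$ to get $\mathfrak E_h(f)\le\widehat{\mathfrak E}_h(f)\le\widehat M_h\|f\|_{L^2(\Omega)}$ and $M_h\le\widehat M_h$, then substitute these into Theorem~\ref{thm:CR-L2} and Corollary~\ref{cor:CR-refined}. Your observation that every quantity is non-negative, so products and the minimum preserve the inequalities, is the only justification needed.
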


\begin{proof}
As in the proof of Corollary~\ref{cor:diameter-bound}, $C_{0,h}\le\widehat C_{0,h}$ and the $L^2$-orthogonality of $\PiZero$ give
\begin{align*}
  \mathfrak E_h(f)
  \le \widehat{\mathfrak E}_h(f)
  \le \widehat M_h\|f\|_{L^2(\Omega)},
  \quad
  M_h\le\widehat M_h.
\end{align*}
The three estimates now follow from Theorem~\ref{thm:CR-L2} and Corollary~\ref{cor:CR-refined}:
\begin{align*}
  |u-u_h^{\CR}|_{H^1(\mathbb T_h)}
  &\le 2\mathfrak E_h(f)
  \le 2\widehat{\mathfrak E}_h(f)
  \le 2\widehat M_h\|f\|_{L^2(\Omega)},\\
  \|u-u_h^{\CR}\|_{L^2(\Omega)}
  &\le M_h^2\|f\|_{L^2(\Omega)}
  \le \widehat M_h^2\|f\|_{L^2(\Omega)},\\
  \|u-u_h^{\CR}\|_{L^2(\Omega)}
  &\le 2M_h\mathfrak E_h(f)
  \le 2\widehat M_h\widehat{\mathfrak E}_h(f).
\end{align*}
Taking the minimum of the two upper bounds proves \eqref{eq:CR-explicit}.
\end{proof}

\begin{remark}[Structure of the CR argument]\label{rem:CR-proof}
The broken-energy estimate in Theorem~\ref{thm:CR-L2} is obtained through the conforming solution $u_h^c$. The $L^2$ estimate is not derived from a broken-energy duality inequality; it follows from $V_h^c\subset V_{h,0}^{\CR}$ and positivity of the gap operator $D_h=J_h^{\CR}-J_h^c$.
\end{remark}

\section{Finite-dimensional form of the Marini constant}\label{sec:matrix}
The supremum in \eqref{eq:kappaM-def} can be written as a finite-dimensional generalised eigenvalue problem. Only scalar conforming and CR matrices and explicit element geometry are needed; an $\RT^0$ basis is not assembled.

Let $\{\psi_j\}_{j=1}^N$ be a basis of $X_h$. For $\mathbf g=(g_1,\ldots,g_N)^{\top} \in\R^N$, we set
\begin{align*}
  g_h:=\sum_{j=1}^N g_j\psi_j.
\end{align*}
The mass matrix $M_0\in\R^{N\times N}$ of $X_h$ is defined as
\begin{align*}
  (M_0)_{ij}:=(\psi_j,\psi_i)_{L^2(\Omega)}.
\end{align*}
It is symmetric positive definite and satisfies
\begin{align}\label{eq:M0-quadratic}
  \|g_h\|_{L^2(\Omega)}^2=\mathbf g^{\top} M_0\mathbf g.
\end{align}

We choose bases $\{\phi_i^c\}_{i=1}^{n_c}$ of $V_h^c$ and $\{\phi_i^{\CR}\}_{i=1}^{n_{\CR}}$ of $V_{h,0}^{\CR}$. We use the row-test-function convention
\begin{align*}
  (A_c)_{ik}
  &:=a(\phi_k^c,\phi_i^c),
  \quad
  (B_c)_{ij}
  :=(\psi_j,\phi_i^c)_{L^2(\Omega)},\\
  (A_{\CR})_{ik}
  &:=a_h(\phi_k^{\CR},\phi_i^{\CR}),
  \quad
  (B_{\CR})_{ij}
  :=(\psi_j,\phi_i^{\CR})_{L^2(\Omega)}.
\end{align*}
The matrices $A_c$ and $A_{\CR}$ are symmetric positive definite. If $\mathbf z_c$ and $\mathbf z_{\CR}$ are the coefficient vectors of $z_h^c(g_h)$ and $z_h^{\CR}(g_h)$ in the chosen bases, respectively, then
\begin{align}\label{eq:scalar-solution-vectors}
  \mathbf z_c=A_c^{-1}B_c\mathbf g,
  \quad
  \mathbf z_{\CR}=A_{\CR}^{-1}B_{\CR}\mathbf g.
\end{align}
Consequently,
\begin{align*}
  |z_h^c(g_h)|_{H^1(\Omega)}^2
  &=\mathbf g^{\top} B_c^{\! {\top}}A_c^{-1}B_c\mathbf g,\\
  |z_h^{\CR}(g_h)|_{H^1(\mathbb T_h)}^2
  &=\mathbf g^{\top} B_{\CR}^{\! {\top}}A_{\CR}^{-1}B_{\CR}\mathbf g.
\end{align*}
We define
\begin{align}\label{eq:Ggap}
  G_{\rm gap}
  :=B_{\CR}^{\! {\top}}A_{\CR}^{-1}B_{\CR}
    -B_c^{\! {\top}}A_c^{-1}B_c.
\end{align}
The energy-gap identity \eqref{eq:energy-gap} gives, for any $\mathbf g\in\R^N$,
\begin{align}\label{eq:Ggap-quadratic}
  \mathbf g^{\top} G_{\rm gap}\mathbf g
  =|z_h^{\CR}(g_h)-z_h^c(g_h)|_{H^1(\mathbb T_h)}^2
  \ge0.
\end{align}
Therefore, $G_{\rm gap}$ is symmetric positive semidefinite. This matrix is the finite-dimensional form of the CR--conforming energy gap.

The second term in Proposition~\ref{prop:defect-decomp} is represented by $D_M\in\R^{N\times N}$, defined by
\begin{align}\label{eq:DM}
  (D_M)_{ij}
  :=\frac1{d^2}\sum_{T\in\mathbb{T}_h}\mu_T
       (\psi_j|_T)(\psi_i|_T).
\end{align}
Indeed, writing $g_T:=g_h|_T$, we have
\begin{align}\label{eq:DM-quadratic}
  \mathbf g^{\top} D_M\mathbf g
  =\frac1{d^2}\sum_{T\in\mathbb T_h}\mu_T|g_T|^2.
\end{align}
Thus, $D_M$ is symmetric positive definite and contains only the elementwise second moments $\mu_T$. For the element-characteristic basis, we index the basis functions by the mesh elements and we set
\begin{align*}
  \psi_T:=\mathbf 1_T,
  \quad T\in\mathbb T_h.
\end{align*}
Accordingly, the rows and columns of $M_0$ and $D_M$ are indexed by $S,T\in\mathbb T_h$. Both matrices are diagonal. For an element $T$ with vertices $\bm x_1,\ldots,\bm x_{d+1}$,
\begin{align*}
  (M_0)_{ST}
  &=\delta_{ST}|T|_d,\\
  (D_M)_{ST}
  &=\delta_{ST}
    \frac{|T|_d}{d^2(d+1)^2(d+2)}
    \sum_{1\le i<j\le d+1}|\bm x_i-\bm x_j|^2.
\end{align*}
Here, $\delta_{ST}$ denotes the Kronecker delta. In particular, $(M_0)_{TT}=|T|_d$ is the mass-matrix entry associated with the basis function $\mathbf 1_T$, and
\begin{align}\label{eq:DM-diagonal-ratio}
  \frac{(D_M)_{TT}}{(M_0)_{TT}}
  =\frac{1}{d^2(d+1)^2(d+2)}
    \sum_{1\le i<j\le d+1}|\bm x_i-\bm x_j|^2.
\end{align}

To identify the matrix of the complete Marini defect, we define the symmetric matrix $G_M\in\R^{N\times N}$ as
\begin{align}\label{eq:GM-def}
  (G_M)_{ij}
  :=\left(
    \bm{\sigma}_h^M(\psi_j)-\nabla z_h^c(\psi_j),
    \bm{\sigma}_h^M(\psi_i)-\nabla z_h^c(\psi_i)
    \right)_{L^2(\Omega)^d}.
\end{align}
The linearity of the conforming and CR solution operators, and hence of the Marini reconstruction, implies
\begin{align}\label{eq:GM-quadratic}
  \mathbf g^{\top} G_M\mathbf g
  =\|\bm{\sigma}_h^M(g_h)-\nabla z_h^c(g_h)\|_{L^2(\Omega)^d}^2.
\end{align}
Combining \eqref{eq:Ggap-quadratic}, \eqref{eq:DM-quadratic}, and Proposition~\ref{prop:defect-decomp}, we obtain equality of the corresponding quadratic forms for every $\mathbf g\in\R^N$. Since all three matrices are symmetric, this proves the exact matrix identity
\begin{align}\label{eq:GM-scalar}
  G_M=G_{\rm gap}+D_M.
\end{align}

Equations \eqref{eq:kappaM-def}, \eqref{eq:M0-quadratic}, and \eqref{eq:GM-scalar} therefore yield
\begin{align}
  \kappa_{M,h}^2
  &=\max_{0\ne\mathbf g\in\R^N}
    \frac{\mathbf g^{\top} (G_{\rm gap}+D_M)\mathbf g}
         {\mathbf g^{\top} M_0\mathbf g} \notag\\
  &=\lambda_{\max}(G_{\rm gap}+D_M,M_0).
  \label{eq:kappa-eig}
\end{align}
Here, $\lambda_{\max}(A,M)$ denotes the largest generalised eigenvalue of $A\mathbf v=\lambda M\mathbf v$. Although these matrices depend on the chosen basis of $X_h$, the generalised eigenvalue, and hence $\kappa_{M,h}$, does not.

One direct realisation is to solve
\begin{align*}
  A_cZ_c=B_c,
  \quad
  A_{\CR}Z_{\CR}=B_{\CR},
\end{align*}
with multiple right-hand sides and then form $B_c^{\! \top}Z_c$ and $B_{\CR}^{\! \top}Z_{\CR}$. For large $\dim X_h$ we instead use a matrix-free largest-eigenvalue iteration. The action of $G_{\rm gap}$ on $\mathbf g$ requires one conforming and one CR sparse solve, with reusable factorisations, while $D_M$ is diagonal in the element-characteristic basis. No dense gap matrix, mixed saddle-point system, or $\RT^0$ basis is formed.

\begin{remark}[Rigorous numerical enclosure]\label{rem:verified-matrix}
A certified numerical upper bound for $\kappa_{M,h}$ would require outward-rounded enclosures for the matrix assemblies, linear solves, and largest generalised eigenvalue in \eqref{eq:kappa-eig}. For a termwise upper enclosure of $G_{\rm gap}$ in the Loewner order, the CR contribution must be bounded from above and the subtracted conforming contribution from below. The computations in Section~\ref{sec:numerics} use standard floating-point arithmetic and do not provide such enclosures.
\end{remark}

\section{Anisotropic geometry of the computable constant}\label{sec:anisotropic}
The single-mesh estimates of Theorems~\ref{thm:energy}, \ref{thm:L2}, and \ref{thm:CR-L2} require neither shape regularity nor a maximum-angle condition. A rate for $\kappa_{M,h}$ or $M_h$, however, is a statement about a mesh family and needs further geometric information.

The matrix identity \eqref{eq:GM-scalar} separates the two contributions to $\kappa_{M,h}$. We define
\begin{align}\label{eq:gamma-gap-geo}
  \gamma_{\rm gap,h}^2
  :=\lambda_{\max}(G_{\rm gap},M_0),
  \quad
  \gamma_{\rm geo,h}^2
  :=\lambda_{\max}(D_M,M_0).
\end{align}
From \eqref{eq:M0-quadratic}, \eqref{eq:Ggap-quadratic}, and \eqref{eq:DM-quadratic}, these quantities have the basis-independent variational characterisations
\begin{align*}
  \gamma_{\rm gap,h}^2
  &=\max_{0\ne g_h\in X_h}
    \frac{|z_h^{\CR}(g_h)-z_h^c(g_h)|_{H^1(\mathbb T_h)}^2}
         {\|g_h\|_{L^2(\Omega)}^2},\\
  \gamma_{\rm geo,h}^2
  &=\max_{0\ne g_h\in X_h}
    \frac{d^{-2}\sum_{T\in\mathbb T_h}\mu_T|g_T|^2}
         {\|g_h\|_{L^2(\Omega)}^2},
  \quad g_T:=g_h|_T.
\end{align*}
Thus, $\gamma_{\rm gap,h}$ is the norm of $D_h|_{X_h}:X_h\to V_{h,0}^{\CR}$, with the $L^2$ norm on $X_h$ and the broken $H^1$ norm on the target space, whereas $\gamma_{\rm geo,h}$ contains only the explicit Marini second moment.

\begin{proposition}[Spectral separation of the Marini constant]
\label{prop:anisotropic-split}
The three computable quantities in \eqref{eq:kappa-eig} and \eqref{eq:gamma-gap-geo} satisfy
\begin{align}\label{eq:anisotropic-spectral-sandwich}
  \max\{\gamma_{\rm gap,h}^2,\gamma_{\rm geo,h}^2\}
  \le \kappa_{M,h}^2
  \le \gamma_{\rm gap,h}^2+\gamma_{\rm geo,h}^2.
\end{align}
For the element-characteristic basis of $X_h$, let $\bm x_{T,1},\ldots,\bm x_{T,d+1}$ denote the vertices of $T$. Then,
\begin{align}\label{eq:gamma-geo-exact}
  \frac{h^2}{d^2(d+1)^2(d+2)}
  \le \gamma_{\rm geo,h}^2
  &=\max_{T\in\mathbb{T}_h}
    \frac{1}{d^2(d+1)^2(d+2)}
    \sum_{1\le i<j\le d+1}|\bm x_{T,i}-\bm x_{T,j}|^2 \notag\\
  &\le \frac{h^2}{2d(d+1)(d+2)}.
\end{align}
Equivalently,
\begin{align}\label{eq:gamma-geo-two-sided}
  \frac{h}{d(d+1)\sqrt{d+2}}
  \le \gamma_{\rm geo,h}
  \le \frac{h}{\sqrt{2d(d+1)(d+2)}}.
\end{align}
In particular, these bounds are $h/12\le\gamma_{\rm geo,h}\le h/\sqrt{48}$ for $d=2$ and $h/\sqrt{720}\le\gamma_{\rm geo,h}\le h/\sqrt{120}$ for $d=3$. The exact geometric term records the edge lengths, but neither its formula nor these bounds contain an inverse altitude or a minimum-angle or aspect-ratio factor.
\end{proposition}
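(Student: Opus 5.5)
The plan is to read everything off the matrix identity \eqref{eq:GM-scalar} and the Rayleigh-quotient characterisation \eqref{eq:kappa-eig}. Both $G_{\rm gap}$ and $D_M$ are symmetric positive semidefinite: the first by \eqref{eq:Ggap-quadratic}, the second by \eqref{eq:DM-quadratic}. Hence, for every $\mathbf g\ne0$,
\begin{align*}
  \frac{\mathbf g^\top G_{\rm gap}\mathbf g}{\mathbf g^\top M_0\mathbf g},\;
  \frac{\mathbf g^\top D_M\mathbf g}{\mathbf g^\top M_0\mathbf g}
  \le
  \frac{\mathbf g^\top (G_{\rm gap}+D_M)\mathbf g}{\mathbf g^\top M_0\mathbf g}
  =\frac{\mathbf g^\top G_{\rm gap}\mathbf g}{\mathbf g^\top M_0\mathbf g}
   +\frac{\mathbf g^\top D_M\mathbf g}{\mathbf g^\top M_0\mathbf g}.
\end{align*}
Taking the supremum over $\mathbf g$ in the left inequality gives $\max\{\gamma_{\rm gap,h}^2,\gamma_{\rm geo,h}^2\}\le\kappa_{M,h}^2$. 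Bounding each summand on the right by its own supremum gives $\kappa_{M,h}^2\le\gamma_{\rm gap,h}^2+\gamma_{\rm geo,h}^2$. Together these prove \eqref{eq:anisotropic-spectral-sandwich}.

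For the exact formula in \eqref{eq:gamma-geo-exact}, I use the element-characteristic basis, in which $M_0$ and $D_M$ are both diagonal with positive entries. The generalised eigenvalues are then the diagonal ratios \eqref{eq:DM-diagonal-ratio}, and $\gamma_{\rm geo,h}^2$ is their maximum over $T$. Basis independence follows from the variational characterisation. Explicitly, the quotient $\sum_T(D_M)_{TT}g_T^2/\sum_T(M_0)_{TT}g_T^2$ is a weighted average of these ratios, so its maximum is the largest ratio, attained by concentrating $g_h$ on one element.

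The two-sided bounds come from elementary estimates on $S_T:=\sum_{i<j}|\bm x_{T,i}-\bm x_{T,j}|^2$.
\begin{itemize}
\item For the upper bound, each of the $d(d+1)/2$ edges has length at most $h_T\le h$. This gives $S_T\le d(d+1)h^2/2$, and dividing by $d^2(d+1)^2(d+2)$ yields $h^2/(2d(d+1)(d+2))$.
\item For the lower bound, choose an element $T^*$ with $h_{T^*}=h$. The diameter of a simplex equals its longest edge, since the diameter of a convex hull of finitely many points is attained at a pair of vertices. Hence $S_{T^*}\ge h^2$, and the maximum over $T$ is at least $h^2/(d^2(d+1)^2(d+2))$.
\end{itemize}
Taking square roots gives \eqref{eq:gamma-geo-two-sided}. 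Substituting $d=2$ gives denominators $144$ and $48$, and $d=3$ gives $720$ and $120$.

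The closing remark follows by inspection: the formula contains only edge lengths, with no altitude or inradius. The only point needing a word of justification is that the diameter of a simplex is attained by an edge, and that step is standard.
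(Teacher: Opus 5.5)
Your proposal is correct and follows the paper's proof essentially step for step. Like the paper, you get the sandwich from the non-negativity of the two Rayleigh quotients, and the exact formula for $\gamma_{\rm geo,h}^2$ from the weighted-average argument in the diagonal element-characteristic basis. The two-sided bounds come, as in the paper, from the longest-edge property of the diameter and from the count of $d(d+1)/2$ edges.
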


\begin{proof}
For $0\ne\mathbf g\in\R^N$, we define
\begin{align*}
  R_{\rm gap}(\mathbf g)
  &:=
  \frac{\mathbf g^{\top}G_{\rm gap}\mathbf g}
       {\mathbf g^{\top}M_0\mathbf g}, \quad
  R_{\rm geo}(\mathbf g)
  :=
  \frac{\mathbf g^{\top}D_M\mathbf g}
       {\mathbf g^{\top}M_0\mathbf g}.
\end{align*}
By the definitions of the three largest generalised eigenvalues,
\begin{align*}
  \kappa_{M,h}^2
  &=
  \max_{0\ne\mathbf g\in\R^N}
  \left (R_{\rm gap}(\mathbf g)+R_{\rm geo}(\mathbf g)\right),\\
  \gamma_{\rm gap,h}^2
  &=\max_{0\ne\mathbf g\in\R^N}R_{\rm gap}(\mathbf g),\quad
  \gamma_{\rm geo,h}^2
  =\max_{0\ne\mathbf g\in\R^N}R_{\rm geo}(\mathbf g).
\end{align*}
Because $G_{\rm gap}$ and $D_M$ are positive semidefinite, both Rayleigh quotients are non-negative. It follows that
\begin{align*}
  \kappa_{M,h}^2
  \ge\gamma_{\rm gap,h}^2,
  \quad
  \kappa_{M,h}^2
  \ge\gamma_{\rm geo,h}^2.
\end{align*}
Furthermore, for any $0\ne\mathbf g\in\R^N$,
\begin{align*}
  R_{\rm gap}(\mathbf g)+R_{\rm geo}(\mathbf g)
  \le
  \gamma_{\rm gap,h}^2+\gamma_{\rm geo,h}^2.
\end{align*}
Taking the maximum over $\mathbf g$ proves \eqref{eq:anisotropic-spectral-sandwich}.

For the element-characteristic basis, we write
\begin{align*}
  m_T:=(M_0)_{TT},
  \quad
  d_T:=(D_M)_{TT}.
\end{align*}
Since $M_0$ and $D_M$ are diagonal, for $\mathbf g=(g_T)_{T\in\mathbb T_h}\ne0$ we have
\begin{align*}
  \frac{\mathbf g^{\top}D_M\mathbf g}
       {\mathbf g^{\top}M_0\mathbf g}
  =
  \frac{\sum_{T\in\mathbb T_h}d_T|g_T|^2}
       {\sum_{T\in\mathbb T_h}m_T|g_T|^2}.
\end{align*}
This is a weighted average of the ratios $d_T/m_T$. Therefore,
\begin{align*}
  \gamma_{\rm geo,h}^2
  =
  \max_{T\in\mathbb T_h}\frac{d_T}{m_T}.
\end{align*}
Using \eqref{eq:DM-diagonal-ratio} gives
\begin{align*}
  \gamma_{\rm geo,h}^2
  =
  \max_{T\in\mathbb T_h}
  \frac{1}{d^2(d+1)^2(d+2)}
  \sum_{1\le i<j\le d+1}
  |\bm x_{T,i}-\bm x_{T,j}|^2.
\end{align*}
We choose $T_*\in\mathbb T_h$ such that $h_{T_*}=h$. Because the diameter of a simplex is the length of one of its edges,
\begin{align*}
  \sum_{1\le i<j\le d+1}
  |\bm x_{T_*,i}-\bm x_{T_*,j}|^2
  \ge h^2.
\end{align*}
This proves the lower bound in \eqref{eq:gamma-geo-exact}.

A $d$-simplex has $d(d+1)/2$ edges, and any edge of $T$ has length at most $h_T\le h$. Therefore,
\begin{align*}
  \sum_{1\le i<j\le d+1}
  |\bm x_{T,i}-\bm x_{T,j}|^2
  \le
  \frac{d(d+1)}{2}h^2.
\end{align*}
Substitution into the exact formula for $\gamma_{\rm geo,h}^2$ proves the upper bound in \eqref{eq:gamma-geo-exact}. Taking square roots yields \eqref{eq:gamma-geo-two-sided}.
\end{proof}

\begin{corollary}[Exact addition when the geometric coefficient is elementwise constant]
\label{cor:exact-gap-geo-addition}
We define
\begin{align*}
  c_T
  :=\frac{1}{d^2(d+1)^2(d+2)}
    \sum_{1\le i<j\le d+1}
    |\bm x_{T,i}-\bm x_{T,j}|^2.
\end{align*}
If there exists a number $c_h>0$, independent of $T\in\mathbb T_h$, such that $c_T=c_h$, then $D_M=c_hM_0$ and
\begin{align}\label{eq:exact-gap-geo-addition}
  \kappa_{M,h}^2
  =
  \gamma_{\rm gap,h}^2+\gamma_{\rm geo,h}^2.
\end{align}
In particular, this holds on any mesh consisting of congruent simplices. The condition is weaker than congruence and explains the exact addition observed for several mesh families in Section~\ref{sec:numerics}.
\end{corollary}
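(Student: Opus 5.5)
The plan is to work entirely in the element-characteristic basis $\psi_T=\mathbf 1_T$, where $M_0$ and $D_M$ are diagonal. I will first show that the hypothesis $c_T=c_h$ forces $D_M=c_hM_0$, and then read off \eqref{eq:exact-gap-geo-addition} from the Rayleigh-quotient characterisations in Proposition~\ref{prop:anisotropic-split}.

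\textbf{Step 1: $D_M=c_hM_0$.} In this basis, $(M_0)_{TT}=|T|_d$, and \eqref{eq:DM-diagonal-ratio} reads $(D_M)_{TT}/(M_0)_{TT}=c_T$. Since both matrices are diagonal, $c_T=c_h$ for every $T$ gives $D_M=c_hM_0$ entrywise. Equivalently, via \eqref{eq:DM-quadratic}, $\mathbf g^\top D_M\mathbf g=c_h\|g_h\|_{L^2(\Omega)}^2$. The quadratic form is basis independent, so the matrix identity $D_M=c_hM_0$ holds in any basis of $X_h$.

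\textbf{Step 2: the eigenvalues add exactly.}
\begin{itemize}
\item The geometric Rayleigh quotient is constant: $R_{\rm geo}(\mathbf g)=c_h$ for every $\mathbf g\ne0$. Hence $\gamma_{\rm geo,h}^2=c_h$, which is also consistent with the max formula in \eqref{eq:gamma-geo-exact}.
\item By \eqref{eq:kappa-eig}, $\kappa_{M,h}^2=\max_{\mathbf g\ne0}\bigl(R_{\rm gap}(\mathbf g)+c_h\bigr)$.
\item Because $c_h$ does not depend on $\mathbf g$, it comes out of the maximum, giving $\gamma_{\rm gap,h}^2+c_h=\gamma_{\rm gap,h}^2+\gamma_{\rm geo,h}^2$.
\end{itemize}
The same conclusion can be stated as a spectral shift: the pencil $(G_{\rm gap}+c_hM_0,M_0)$ has the eigenvalues of $(G_{\rm gap},M_0)$ shifted by $c_h$. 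Either way, the upper bound in \eqref{eq:anisotropic-spectral-sandwich} is attained.

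\textbf{Step 3: congruent meshes.} If all simplices are congruent, they share the same multiset of edge lengths. Hence $\sum_{i<j}|\bm x_{T,i}-\bm x_{T,j}|^2$ is the same for every $T$, and so $c_T$ is constant. The hypothesis $c_h>0$ holds automatically because the simplices are non-degenerate.

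None of these steps is a genuine obstacle. The only point needing care is that the maximiser of $R_{\rm gap}+R_{\rm geo}$ coincides with that of $R_{\rm gap}$, and this is immediate once $R_{\rm geo}$ is seen to be constant.
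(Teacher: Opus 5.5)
Your proposal is correct and follows essentially the same route as the paper's proof: the diagonal ratio \eqref{eq:DM-diagonal-ratio} gives $D_M=c_hM_0$ and $\gamma_{\rm geo,h}^2=c_h$, and the constant shift then passes through the largest generalised eigenvalue, so $\lambda_{\max}(G_{\rm gap}+c_hM_0,M_0)=\gamma_{\rm gap,h}^2+c_h$. Your Step~3 (congruent simplices share their edge lengths, so $c_T$ is constant and positive) is a small verification that the paper leaves implicit.
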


\begin{proof}
For the element-characteristic basis, \eqref{eq:DM-diagonal-ratio} gives
\begin{align*}
  (D_M)_{TT}=c_T(M_0)_{TT}.
\end{align*}
Therefore, the hypothesis $c_T=c_h$ for any $T\in\mathbb T_h$ implies
\begin{align*}
  D_M=c_hM_0,
  \quad
  \gamma_{\rm geo,h}^2=c_h.
\end{align*}
Thus,
\begin{align*}
  \kappa_{M,h}^2
  &=
  \lambda_{\max}(G_{\rm gap}+c_hM_0,M_0)
  =
  \lambda_{\max}(G_{\rm gap},M_0)+c_h
  =
  \gamma_{\rm gap,h}^2+\gamma_{\rm geo,h}^2.
\end{align*}
\end{proof}

\begin{corollary}[Gap criterion for first-order decay]\label{cor:gap-first-order}
Combining \eqref{eq:anisotropic-spectral-sandwich} and \eqref{eq:gamma-geo-exact} gives
\begin{align}\label{eq:gap-first-order-sandwich}
  \max\left\{
    \gamma_{\rm gap,h},
    \frac{h}{d(d+1)\sqrt{d+2}}
  \right\}
  \le \kappa_{M,h}
  \le
  \left(
    \gamma_{\rm gap,h}^2
    +\frac{h^2}{2d(d+1)(d+2)}
  \right)^{1/2}.
\end{align}
Therefore, along any mesh family with $h\to0$,
\begin{align*}
  \kappa_{M,h}=O(h)
  \quad\Longleftrightarrow\quad
  \gamma_{\rm gap,h}=O(h),
\end{align*}
and $\kappa_{M,h}$ is not $o(h)$.
\end{corollary}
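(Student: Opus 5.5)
The sandwich \eqref{eq:gap-first-order-sandwich} will come straight from Proposition~\ref{prop:anisotropic-split}. From the lower half of \eqref{eq:anisotropic-spectral-sandwich} we have $\kappa_{M,h}^2\ge\gamma_{\rm gap,h}^2$ and $\kappa_{M,h}^2\ge\gamma_{\rm geo,h}^2$. The lower bound in \eqref{eq:gamma-geo-two-sided} gives $\gamma_{\rm geo,h}\ge h/(d(d+1)\sqrt{d+2})$. Taking square roots, all quantities being non-negative, then gives the left inequality.

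For the right inequality, we use the upper half of \eqref{eq:anisotropic-spectral-sandwich}, namely $\kappa_{M,h}^2\le\gamma_{\rm gap,h}^2+\gamma_{\rm geo,h}^2$. We insert the upper bound $\gamma_{\rm geo,h}^2\le h^2/(2d(d+1)(d+2))$ from \eqref{eq:gamma-geo-exact} and take square roots.

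The equivalence then follows by reading off the two sides along a family with $h\to0$.

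\emph{If $\kappa_{M,h}=O(h)$:} the left inequality gives $\gamma_{\rm gap,h}\le\kappa_{M,h}$, so $\gamma_{\rm gap,h}=O(h)$.

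\emph{If $\gamma_{\rm gap,h}\le Ch$ for small $h$:} the right inequality gives
\[
\kappa_{M,h}\le h\bigl(C^2+1/(2d(d+1)(d+2))\bigr)^{1/2},
\]
so $\kappa_{M,h}=O(h)$.

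Finally, the left inequality gives $\kappa_{M,h}/h\ge 1/(d(d+1)\sqrt{d+2})>0$ for every mesh. The ratio therefore cannot tend to zero, and $\kappa_{M,h}$ is not $o(h)$.

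There is no real obstacle here, since every step is a direct consequence of the preceding proposition. The only point to check is that the constants in \eqref{eq:gamma-geo-two-sided} are independent of the mesh, and they are, because they depend only on $d$. This uniformity is exactly what makes the $O(h)$ equivalence and the non-$o(h)$ statement hold for an arbitrary family, without any shape-regularity assumption.
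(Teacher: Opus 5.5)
Your proposal is correct and follows the paper's proof step for step. In both, the two sides of the sandwich come from Proposition~\ref{prop:anisotropic-split} combined with the dimension-only bounds on $\gamma_{\rm geo,h}$, and the $O(h)$ equivalence and the non-$o(h)$ claim are then read off directly from those inequalities.
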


\begin{proof}
The first bound in \eqref{eq:gap-first-order-sandwich} follows from
\begin{align*}
  \kappa_{M,h}
  \ge
  \max\{\gamma_{\rm gap,h},\gamma_{\rm geo,h}\}
\end{align*}
and the lower bound for $\gamma_{\rm geo,h}$ in \eqref{eq:gamma-geo-two-sided}. The upper bound follows similarly from
\begin{align*}
  \kappa_{M,h}^2
  \le
  \gamma_{\rm gap,h}^2+\gamma_{\rm geo,h}^2
\end{align*}
and the upper bound for $\gamma_{\rm geo,h}$. If $\kappa_{M,h}=O(h)$, then $\gamma_{\rm gap,h}\le\kappa_{M,h}$ implies $\gamma_{\rm gap,h}=O(h)$. Conversely, if $\gamma_{\rm gap,h}=O(h)$, the upper bound in \eqref{eq:gap-first-order-sandwich} gives $\kappa_{M,h}=O(h)$. Finally, the lower bound gives
\begin{align*}
  \frac{\kappa_{M,h}}{h}
  \ge
  \frac{1}{d(d+1)\sqrt{d+2}},
\end{align*}
hence $\kappa_{M,h}$ cannot be $o(h)$.
\end{proof}

Consequently, the majorant coefficient $M_h^2$ cannot be $o(h^2)$, although the error for a particular right-hand side may converge faster.

The bounds above show that $\gamma_{\rm geo,h}$ is comparable to $h$, with constants depending only on the dimension. Thus, if $\kappa_{M,h}$ decays more slowly than $h$ along a mesh family, the loss must come from $\gamma_{\rm gap,h}$. We estimate this gap below by directional interpolation. On a uniformly semi-regular family, that is, when $H_T/h_T$ remains bounded, the resulting estimates are uniform with respect to the aspect ratio, provided the required regularity or graded-mesh approximation estimates are available.

Two types of families will be used below. Under similarity refinement, a fixed element shape is scaled uniformly in all of its directional lengths. Under grading, the local scale changes with position; the grading may also be anisotropic, so the aspect ratio can diverge while $H_T/h_T$ remains bounded.

The directional comparison in Subsection~\ref{subsec:directional-anisotropic-comparison} covers both similarity-refined and semi-regular graded meshes. Subsection~\ref{subsec:graded-theory} deals specifically with corner grading, including a tensor-product family with unbounded aspect ratio.

\subsection{Directional anisotropic comparison} \label{subsec:directional-anisotropic-comparison}
For rates we return to Corollary~\ref{cor:joint-hypercircle} and compare the Marini pair with an arbitrary conforming approximation and an arbitrary equilibrated flux.

Fix $g_h\in X_h$, and set $z:=Jg_h$. For any $v_h\in V_h^c$ and $\bm{\tau}_h\in\mathcal Q_h(g_h)$, Corollary~\ref{cor:joint-hypercircle} gives
\begin{align}\label{eq:general-anisotropic-comparison}
  \|\bm{\sigma}_h^M(g_h)-\nabla z_h^c(g_h)\|_{L^2(\Omega)^d}^2
  &\le \|\bm{\tau}_h-\nabla v_h\|_{L^2(\Omega)^d}^2 \notag\\
  &=|z-v_h|_{H^1(\Omega)}^2
    +\|\nabla z-\bm{\tau}_h\|_{L^2(\Omega)^d}^2.
\end{align}
Whenever $z$ lies in the domain of the conforming nodal interpolant and $\nabla z$ lies in the domain of the canonical lowest-order RT interpolant, the commuting property and $\varDelta z=-g_h$ give
\begin{align*}
  \divop(I_h^{\RT}\nabla z)
  =\PiZero(\varDelta z)
  =-g_h.
\end{align*}
Since $z\in H_0^1(\Omega)$ and $\varDelta z=-g_h\in L^2(\Omega)$, $\nabla z\in H(\divop;\Omega)$ and its normal traces are compatible across interior faces. This uses only the graph-space regularity of $z$ and does not imply $z\in H^2(\Omega)$. The RT domain condition means that the face-flux moments defining the canonical interpolant are meaningful; elementwise $W^{1,1}$ regularity of $\nabla z$ is sufficient when these moments are written as ordinary face integrals. Hence,
\begin{align*}
  \bm{\tau}_h:=I_h^{\RT}\nabla z\in\mathcal Q_h(g_h),
  \quad
  v_h:=I_h^c z\in V_h^c.
\end{align*}
Thus, \eqref{eq:general-anisotropic-comparison}, applied to this particular admissible pair, gives
\begin{align}\label{eq:smooth-anisotropic-comparison}
  \|\bm{\sigma}_h^M(g_h)-\nabla z_h^c(g_h)\|_{L^2(\Omega)^d}^2
  &\le \|\bm{\tau}_h-\nabla v_h\|_{L^2(\Omega)^d}^2 \notag\\
  &=|z-I_h^c z|_{H^1(\Omega)}^2
    +\|\nabla z-I_h^{\RT}\nabla z\|_{L^2(\Omega)^d}^2.
\end{align}
This interpolated pair is used only for the rate comparison. It is not part of the proofs of Theorems~\ref{thm:energy}, \ref{thm:L2}, or \ref{thm:CR-L2}, and it does not impose a global $H^2(\Omega)$ assumption on the single-mesh estimates.

We make the anisotropic content of this comparison explicit in two dimensions. For each $T\in\mathbb T_h$, number its vertices $\bm p_{T,1},\bm p_{T,2},\bm p_{T,3}$ so that the edge joining $\bm p_{T,2}$ and $\bm p_{T,3}$ is a longest edge and $|\bm p_{T,1}-\bm p_{T,3}|\le |\bm p_{T,1}-\bm p_{T,2}|$, as in the standard-position construction of \citet{Ishizaka2025Interpolation}. We set
\begin{align}\label{eq:anisotropic-element-geometry}
  h_{T,1}&:=|\bm p_{T,2}-\bm p_{T,1}|,
  \quad \bm r_{T,1}:=\frac{\bm p_{T,2}-\bm p_{T,1}}{h_{T,1}}, \notag\\
  h_{T,2}&:=|\bm p_{T,3}-\bm p_{T,1}|,
  \quad \bm r_{T,2}:=\frac{\bm p_{T,3}-\bm p_{T,1}}{h_{T,2}},
  \quad H_T:=\frac{h_{T,1}h_{T,2}}{|T|_d}h_T.
\end{align}
Here, $h_T$ is the diameter of $T$. For a function $v$ for which the following local directional quantities are finite, and for $g\in L^2(\Omega)$, we define
\begin{align}\label{eq:anisotropic-directional-functionals}
  \mathcal A_{L,h}(v)^2
  &:=\sum_{T\in\mathbb T_h}
  \left(\frac{H_T}{h_T}\right)^2
  \sum_{i=1}^2 h_{T,i}^2
  |\partial_{\bm r_{T,i}}v|_{H^1(T)}^2, \notag\\
  \mathcal A_{\RT,h}(v,g)^2
  &:=\sum_{T\in\mathbb T_h}
  \left(
    \frac{H_T}{h_T}\sum_{i=1}^2 h_{T,i}
    \|\partial_{\bm r_{T,i}}\nabla v\|_{L^2(T)^2}
    +h_T\|g\|_{L^2(T)}
  \right)^2.
\end{align}
These quantities are used only for rate comparison; unlike $\kappa_{M,h}$, they involve derivatives of the exact solution and are not asserted to be computable from the discrete matrices.

\begin{proposition}[Two-dimensional directional comparison]
\label{prop:anisotropic-directional-comparison}
Let $d=2$, $g_h\in X_h$, and $z:=Jg_h$. Suppose that the two interpolants in \eqref{eq:smooth-anisotropic-comparison} are well defined and that
\begin{align*}
  \mathcal A_{L,h}(z)+\mathcal A_{\RT,h}(z,g_h)<\infty.
\end{align*}
Then, there exists a constant $C_{\rm ani}$, independent of the mesh sizes and aspect ratios, such that
\begin{align}\label{eq:anisotropic-defect-comparison}
  \|\bm{\sigma}_h^M(g_h)-\nabla z_h^c(g_h)\|_{L^2(\Omega)^2}
  \le C_{\rm ani}
  \left(
    \mathcal A_{L,h}(z)^2
    +\mathcal A_{\RT,h}(z,g_h)^2
  \right)^{1/2}.
\end{align}
If these local interpolation hypotheses hold for any $g_h\in X_h$, then
\begin{align}\label{eq:anisotropic-kappa-comparison}
  \kappa_{M,h}
  \le C_{\rm ani}
  \sup_{0\ne g_h\in X_h}
  \frac{
    \left(
      \mathcal A_{L,h}(Jg_h)^2
      +\mathcal A_{\RT,h}(Jg_h,g_h)^2
    \right)^{1/2}}
  {\|g_h\|_{L^2(\Omega)}}.
\end{align}
\end{proposition}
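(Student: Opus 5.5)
The proof starts from the comparison inequality \eqref{eq:smooth-anisotropic-comparison}. Its right-hand side splits into a conforming interpolation error $|z-I_h^cz|_{H^1(\Omega)}^2$ and an RT interpolation error $\|\nabla z-I_h^{\RT}\nabla z\|_{L^2(\Omega)^2}^2$. Both are sums of elementwise contributions, so it suffices to prove two local anisotropic interpolation estimates on each $T\in\mathbb T_h$ with constants independent of $h_{T,1},h_{T,2}$ and of the aspect ratio. These estimates are then summed over the mesh. Admissibility of the pair $(I_h^cz,I_h^{\RT}\nabla z)$ was already verified before the statement, through the commuting property $\divop I_h^{\RT}\nabla z=\PiZero\varDelta z=-g_h$.

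\emph{Lagrange part.} On each $T$, in the standard-position labelling \eqref{eq:anisotropic-element-geometry}, I would invoke the directional $H^1$ Lagrange estimate of \citet{Ishizaka2025Interpolation}:
\begin{align*}
  |z-I_T^cz|_{H^1(T)}
  \le C\frac{H_T}{h_T}\sum_{i=1}^2 h_{T,i}|\partial_{\bm r_{T,i}}z|_{H^1(T)}.
\end{align*}
This holds whenever the nodal interpolant is defined (in 2D, $H^2(T)$ suffices locally). By Cauchy--Schwarz on the two-term sum, squaring and summing over $T$ gives $C\mathcal A_{L,h}(z)^2$.

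\emph{RT part.} Here I would invoke the anisotropic RT estimate of \citet{Ishizaka2022RT} for $\bm v=\nabla z$:
\begin{align*}
  \|\bm v-I_T^{\RT}\bm v\|_{L^2(T)^2}
  \le C\left(\frac{H_T}{h_T}\sum_{i=1}^2 h_{T,i}\|\partial_{\bm r_{T,i}}\bm v\|_{L^2(T)^2}
  +h_T\|\divop\bm v\|_{L^2(T)}\right).
\end{align*}
With $\divop\nabla z=-g_h$, this is exactly the summand of $\mathcal A_{\RT,h}(z,g_h)^2$ after squaring.

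Adding the two parts and taking square roots gives \eqref{eq:anisotropic-defect-comparison}, where $C_{\rm ani}$ is the larger of the two reference constants. Dividing by $\|g_h\|_{L^2(\Omega)}$ and taking the supremum over $g_h\ne0$ in the definition \eqref{eq:kappaM-def} yields \eqref{eq:anisotropic-kappa-comparison}.

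The main obstacle is matching the cited local estimates exactly to the functionals defined here. The directional derivatives, the factor $H_T/h_T$ and the standard-position labelling have to coincide, and the constants must depend only on the reference configuration, not on angles. I would first check that the RT estimate in \citet{Ishizaka2022RT} carries the divergence term with weight $h_T$, and not something like $h_T^2/H_T$. If it does not, I would reduce the divergence term using $\divop(\bm v-I_T^{\RT}\bm v)=(I-\Pi_T^0)\divop\bm v=0$ for piecewise constant $g_h$. This reduction weakens the required estimate, since the divergence term is then only an upper bound.
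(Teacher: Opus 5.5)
Your proposal is correct and follows essentially the same route as the paper: you insert the directional Lagrange estimate of \citet[Theorem~16.7]{Ishizaka2025Interpolation} (with $m=1$, $\ell=2$, $p=q=2$, combined with Cauchy--Schwarz over the two directions) and the lowest-order directional RT estimate, applied to $\nabla z$ with $\divop\nabla z=-g_h$, into \eqref{eq:smooth-anisotropic-comparison}, and then take the supremum over $g_h$. The matching issue you anticipate is resolved in the paper as you expected: the RT estimate carries the divergence term with weight $h_T$, and the cited standard-position factor $H_{T_0}$ is equivalent to $H_T$ up to a factor two, which is absorbed into $C_{\rm ani}$, so your fallback argument is not needed.
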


\begin{proof}
The physical-direction Lagrange estimate of
\citet[Theorem~16.7]{Ishizaka2025Interpolation}, with $m=1$, $\ell=2$, and $p=q=2$, uses the directional scales $\alpha_{T,i}=h_{T,i}$ and gives
\begin{align*}
  |z-I_h^cz|_{H^1(\Omega)}
  \le C_L\mathcal A_{L,h}(z).
\end{align*}
Here, the square-sum form follows by applying the local estimate and the Cauchy--Schwarz inequality to the two physical directional contributions. The lowest-order physical-direction RT estimate \citet[Theorem~20.14]{Ishizaka2025Interpolation} (see also \citet[Theorem~2]{Ishizaka2022RT}) uses the same scales $\alpha_{T,i}=h_{T,i}$. In the notation of \citet[Theorem~2]{Ishizaka2022RT}, the corresponding standard-position factor is written using $H_{T_0}$; $H_{T_0}$ and $H_T$ are equivalent up to a factor two by \citet[Lemma~1]{Ishizaka2022RT}, so this difference is absorbed into the mesh-independent constant. Applied to $\nabla z$ and using $\divop(\nabla z)=\varDelta z=-g_h$, the RT estimate gives
\begin{align*}
  \|\nabla z-I_h^{\RT}\nabla z\|_{L^2(\Omega)^2}
  \le C_{\RT}\mathcal A_{\RT,h}(z,g_h).
\end{align*}
Substitution into \eqref{eq:smooth-anisotropic-comparison} proves \eqref{eq:anisotropic-defect-comparison}, after increasing the constant if necessary. Taking the supremum over $0\ne g_h\in X_h$ proves \eqref{eq:anisotropic-kappa-comparison}.
\end{proof}

For comparison with the standard globally regular $O(h)$ theory, assume a two-dimensional semi-regular family satisfying
\begin{align}\label{eq:semi-regular-condition}
  \sup_h\max_{T\in\mathbb T_h}\frac{H_T}{h_T}\le\gamma_0<\infty.
\end{align}
Then $h_{T,i}\le h_T\le h$. For $v\in H^2(T)$ and any unit direction $\bm r$,
\begin{align*}
  |\partial_{\bm r}v|_{H^1(T)}
  &=\|\partial_{\bm r}\nabla v\|_{L^2(T)^2}
  \le \sqrt{2}\,|v|_{H^2(T)},\\
  \|\varDelta v\|_{L^2(T)}
  &\le \sqrt{2}\,|v|_{H^2(T)}.
\end{align*}
Thus, the divergence term $h_T\|\varDelta v\|_{L^2(T)}$ in the RT estimate is absorbed by the same $h|v|_{H^2(T)}$ scale, and the two directional estimates imply
\begin{align}\label{eq:semi-regular-interpolation-comparison}
  |v-I_h^cv|_{H^1(\Omega)}
  +\|\nabla v-I_h^{\RT}\nabla v\|_{L^2(\Omega)^2}
  \le C(\gamma_0)h\,\|v\|_{H^2(\Omega)}
\end{align}
for any $v\in H^2(\Omega)\cap H_0^1(\Omega)$. Therefore, arbitrarily large aspect ratios are allowed in this comparison; what is controlled is the semi-regular parameter $H_T/h_T$. In particular, \eqref{eq:semi-regular-interpolation-comparison} verifies Assumption~\ref{ass:uniform-smooth} for such two-dimensional families.

The same framework gives directional tetrahedral estimates. In three dimensions, \citet[Theorem~16.7]{Ishizaka2025Interpolation}, with $m=1$ and $\ell=2$, requires local $W^{2,p}$ regularity with $p>2$ for the first-order $H^1$ Lagrange estimate; \citet[Theorems~20.14 and~20.15]{Ishizaka2025Interpolation} supply the corresponding RT estimates. These local estimates can be inserted into \eqref{eq:general-anisotropic-comparison} without assuming the global elliptic estimate $\|Jg\|_{H^2(\Omega)}\le C\|g\|_{L^2(\Omega)}$.

For a dimension-independent comparison with the classical globally regular case, we use the following sufficient assumption.

\begin{assumption}[Uniform $H^2$ interpolation comparison]
\label{ass:uniform-smooth}
For a given simplicial mesh family, suppose that there exists a constant $C_{\rm int}$, independent of the mesh and of $h$, such that 
\begin{align}\label{eq:uniform-interpolation-comparison}
  |v-I_h^c v|_{H^1(\Omega)}
  +\|\nabla v-I_h^{\RT}\nabla v\|_{L^2(\Omega)^d}
  \le C_{\rm int}h\,\|v\|_{H^2(\Omega)}
\end{align}
for any $v\in H^2(\Omega)\cap H_0^1(\Omega)$ and any mesh in the family.
\end{assumption}

\begin{corollary}[Recovery of the classical order in the globally regular case]
\label{cor:classical-order}
Assume, only for this comparison, that the Dirichlet Poisson solution operator satisfies
\begin{align}\label{eq:H2-reg-comparison}
  \|Jg\|_{H^2(\Omega)}
  \le C_{\rm reg}\|g\|_{L^2(\Omega)}
  \quad\forall g\in L^2(\Omega),
\end{align}
and that Assumption~\ref{ass:uniform-smooth} holds. We set
\begin{align*}
  C_M:=\sqrt{\pi^{-2}+C_{\rm int}^2C_{\rm reg}^2}.
\end{align*}
Then,
\begin{align}\label{eq:classical-kappa-order}
  \kappa_{M,h}\le C_{\rm int}C_{\rm reg}h,
  \quad
  M_h\le \widehat M_h\le C_Mh,
\end{align}
and consequently
\begin{align}\label{eq:classical-L2-order}
  \|J-P_hJ\|_{\mathcal L(L^2(\Omega),L^2(\Omega))}
  \le C_M^2h^2.
\end{align}
\end{corollary}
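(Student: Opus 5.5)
The plan is to feed the globally regular interpolation bound into the comparison inequality \eqref{eq:smooth-anisotropic-comparison}, and then pass the resulting bound on $\kappa_{M,h}$ through the definitions of $M_h$ and $\widehat M_h$ and the operator-square identity.

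\emph{Step 1: bound $\kappa_{M,h}$.} Fix $0\ne g_h\in X_h$ and set $z:=Jg_h$. By \eqref{eq:H2-reg-comparison}, $z\in H^2(\Omega)\cap H_0^1(\Omega)$ with $\|z\|_{H^2(\Omega)}\le C_{\rm reg}\|g_h\|_{L^2(\Omega)}$. Since $\varDelta z=-g_h$, the commuting property gives $I_h^{\RT}\nabla z\in\mathcal Q_h(g_h)$.

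The only point needing care is that both interpolants are well defined. In $d\le3$ the embedding $H^2\subset C^0$ makes the nodal interpolant meaningful. For the RT interpolant, $\nabla z\in H^1(\Omega)^d$ restricts to elementwise $W^{1,1}$, which is enough for the face moments, as noted before \eqref{eq:smooth-anisotropic-comparison}. Implicitly this is already part of Assumption~\ref{ass:uniform-smooth}.

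Applying \eqref{eq:smooth-anisotropic-comparison}, then $\sqrt{a^2+b^2}\le a+b$ and \eqref{eq:uniform-interpolation-comparison}, gives
\[
\|\bm{\sigma}_h^M(g_h)-\nabla z_h^c(g_h)\|_{L^2(\Omega)^d}\le C_{\rm int}h\|z\|_{H^2(\Omega)}\le C_{\rm int}C_{\rm reg}h\|g_h\|_{L^2(\Omega)}.
\]
Taking the supremum over $g_h$ in \eqref{eq:kappaM-def} yields $\kappa_{M,h}\le C_{\rm int}C_{\rm reg}h$.

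\emph{Step 2: bound $M_h$ and $\widehat M_h$.} Corollary~\ref{cor:diameter-bound} gives $C_{0,h}\le h/\pi$, so $M_h\le\widehat M_h$. Combining this with Step 1,
\[
\widehat M_h^2\le h^2/\pi^2+C_{\rm int}^2C_{\rm reg}^2h^2=C_M^2h^2.
\]
This is \eqref{eq:classical-kappa-order}.

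\emph{Step 3: the $L^2$ operator bound.} Proposition~\ref{prop:operator-square} gives $\|J-P_hJ\|_{\mathcal L(L^2,L^2)}=\|E_h^c\|_{\mathcal L(L^2,V)}^2$. Theorem~\ref{thm:energy} bounds the latter by $M_h$, so the operator norm is at most $M_h^2\le C_M^2h^2$. Equivalently, Theorem~\ref{thm:L2} gives the same bound directly. This proves \eqref{eq:classical-L2-order}.
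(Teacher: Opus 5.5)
Your proposal is correct and follows the paper's proof essentially step for step. You insert the uniform interpolation assumption and the $H^2$-regularity bound into the comparison inequality \eqref{eq:smooth-anisotropic-comparison} to bound $\kappa_{M,h}$, use $C_{0,h}\le h/\pi$ to obtain $M_h\le\widehat M_h\le C_Mh$, and conclude with Theorem~\ref{thm:L2}; your alternative via Proposition~\ref{prop:operator-square} and Theorem~\ref{thm:energy} is an equivalent route.
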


\begin{proof}
For $g_h\in X_h$, we set $z:=Jg_h$. From \eqref{eq:smooth-anisotropic-comparison}, \eqref{eq:uniform-interpolation-comparison}, and \eqref{eq:H2-reg-comparison},
\begin{align*}
  \|\bm{\sigma}_h^M(g_h)-\nabla z_h^c(g_h)\|_{L^2(\Omega)^d}
  &\le |z-I_h^c z|_{H^1(\Omega)}
    +\|\nabla z-I_h^{\RT}\nabla z\|_{L^2(\Omega)^d}\\
  &\le C_{\rm int}h\,\|z\|_{H^2(\Omega)}\\
  &\le C_{\rm int}C_{\rm reg}h\,\|g_h\|_{L^2(\Omega)}.
\end{align*}
Taking the supremum over $0\ne g_h\in X_h$ proves the first estimate in \eqref{eq:classical-kappa-order}. The bound $C_{0,h}\le h/\pi$, the inequality $M_h\le\widehat M_h$, and the definition of $\widehat M_h$ give
\begin{align*}
  M_h\le\widehat M_h
  &\le h\sqrt{\pi^{-2}+C_{\rm int}^2C_{\rm reg}^2}
  =C_Mh.
\end{align*}
Finally, Theorem~\ref{thm:L2} yields
\begin{align*}
  \|J-P_hJ\|_{\mathcal L(L^2(\Omega),L^2(\Omega))}
  \le M_h^2
  \le \widehat M_h^2
  \le C_M^2h^2,
\end{align*}
which is \eqref{eq:classical-L2-order}.
\end{proof}

\subsection{Recovery by algebraic corner grading without global \texorpdfstring{$H^2$}{H2}-regularity}
\label{subsec:graded-theory}
On the L-shaped domain, the Dirichlet solution operator does not map $L^2$ into $H^2$, so the preceding globally regular comparison is unavailable. We separate the leading corner singularity and grade the mesh toward the re-entrant corner. The tensor-product family used below has the same local grading law as the usual isotropic construction, but its aspect ratio is unbounded while the semi-regular parameter remains bounded. See \citet{ApelNicaise1998} for more general graded meshes.

Let
\begin{align*}
  \Omega_L:=(-1,1)^2\setminus([0,1]\times[-1,0]),
  \quad
  \alpha:=\frac{\pi}{3\pi/2}=\frac23.
\end{align*}
With polar coordinates centred at the re-entrant corner, we choose $0<r_0<r_1<1$ and a smooth radial cut-off $\eta$ such that $\eta(r)=1$ for $r\le r_0$ and $\eta(r)=0$ for $r\ge r_1$. We then set
\begin{align}\label{eq:corner-singular-function}
  S(r,\theta):=\eta(r)r^\alpha\sin(\alpha\theta).
\end{align}
Because $\sin(\alpha\theta)$ vanishes on the two rays forming the re-entrant corner and $\eta$ is supported away from the remaining boundary, $S\in H_0^1(\Omega_L)$. The corner-singularity decomposition for polygonal Dirichlet problems \citep[Theorem~4.4.3.7 and Section~8.4]{Grisvard1985} gives, for any $g\in L^2(\Omega_L)$,
\begin{align}\label{eq:corner-decomposition}
Jg=z_R+c_gS,
\quad
z_R\in H^2(\Omega_L)\cap H_0^1(\Omega_L).
\end{align}
Furthermore, the continuity of this decomposition yields
\begin{align}
\|z_R\|_{H^2(\Omega_L)}+|c_g|
\le C_{\rm sing}\|g\|_{L^2(\Omega_L)}.
\end{align}

For $q>1$ and $n\in\mathbb N$, let $\mathbb T_n^{(q)}$ be the right-triangle mesh obtained from the logical grid $\xi_j=j/n$, $-n\le j\le n$, through
\begin{align}\label{eq:general-grading-map}
  x_j=\operatorname{sign}(\xi_j)|\xi_j|^q,
  \quad
  y_j=\operatorname{sign}(\xi_j)|\xi_j|^q,
\end{align}
with the lower-right quadrant removed. Each retained rectangle is divided along one of its diagonals into two right triangles. Therefore, $H_T/h_T=2$ for any element, so the family is uniformly semi-regular. For $q>1$, its maximum aspect ratio grows like $n^{q-1}$. We write $h_n:=\max_{T\in\mathbb T_n^{(q)}}h_T$ and use the subscript $n$ for all finite element spaces, operators, and Marini constants associated with this mesh.

\begin{lemma}[Geometry of the graded family]\label{lem:graded-geometry}
Let $\mathcal P_n$ be the finite patch of triangles whose closures contain the re-entrant corner and set $\omega_n:=\bigcup_{T\in\mathcal P_n}T$. Then,
\begin{align}\label{eq:graded-geometry}
  h_n\simeq n^{-1},
  \quad
  \operatorname{diam}(\omega_n)\simeq n^{-q}.
\end{align}
Furthermore, if $T\notin\mathcal P_n$ and $r_T:=\operatorname{dist}(T,0)$, then
\begin{align}\label{eq:graded-local-diameter-lemma}
  r_T\ge c n^{-q},
  \quad
  h_T\le Cn^{-1}r_T^{1-1/q},
\end{align}
where the constants may depend on the fixed grading exponent $q$, but are independent of $n$.
\end{lemma}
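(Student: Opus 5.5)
The argument is elementary geometry of the tensor-product grid. Each element is half of a grid rectangle $[x_{j},x_{j+1}]\times[y_{k},y_{k+1}]$, so its diameter equals the rectangle diagonal, and $h_T\simeq\max\{x_{j+1}-x_j,\ y_{k+1}-y_k\}$. Everything therefore reduces to one–dimensional estimates for the graded nodes $t_j:=(j/n)^q$, $0\le j\le n$, together with their reflections.

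\textbf{Step 1: one-dimensional spacings.} By the mean value theorem,
\[
t_{j+1}-t_j=\frac{q}{n}\,\zeta^{q-1}\quad\text{for some }\zeta\in\left[\frac{j}{n},\frac{j+1}{n}\right].
\]
Since $q>1$, this gives $t_{j+1}-t_j\le q/n$, and the bound is attained up to a constant for $j=n-1$. Hence $h_n\simeq n^{-1}$, which is the first half of \eqref{eq:graded-geometry}. At the other end, the first interval is $t_1-t_0=n^{-q}$.

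\textbf{Step 2: the corner patch.} The re-entrant corner is the grid node $(0,0)$. The triangles in $\mathcal P_n$ lie in the rectangles adjacent to this node, and these rectangles have side lengths $t_1=n^{-q}$ in both directions. It follows that $\operatorname{diam}(\omega_n)\simeq n^{-q}$, with constants between $1$ and $2\sqrt2$.

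\textbf{Step 3: elements away from the corner.} Let $T\notin\mathcal P_n$ sit in the rectangle indexed by $(j,k)$, with $|j|$ and $|k|$ denoting the distance in index from $0$ along each axis. Since $T$ does not touch the corner, the index pair satisfies $\max(|j|,|k|)\ge1$, where the indices refer to the lower-left node in absolute values. The distance $r_T$ is at least the larger of the two near-side coordinates, $\max\{t_{|j|},t_{|k|}\}$, taking $t$ at the lower index in absolute value.

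When that lower index is at least $1$, we get $r_T\ge n^{-q}$. One must also handle rectangles touching an axis but not the origin. For such a rectangle one coordinate range starts at $0$, but the other starts at $t_m$ with $m\ge1$, so again $r_T\ge n^{-q}$. Because the L-shaped domain uses all three quadrants, I would handle signs by symmetry.

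For the diameter bound, let $m:=\max$ of the two near-side indices, so $m\ge1$ and $r_T\ge t_m=(m/n)^q$, up to a factor $\sqrt2$ at most. Both side lengths of the rectangle are bounded by the spacing at the larger index:
\[
t_{m+1}-t_m\le \frac{q}{n}\left(\frac{m+1}{n}\right)^{q-1}\le \frac{q\,2^{q-1}}{n}\left(\frac{m}{n}\right)^{q-1}\le \frac{C}{n}\,r_T^{\,1-1/q}.
\]
This uses $m+1\le 2m$ and $(m/n)^{q-1}=\bigl((m/n)^q\bigr)^{1-1/q}\le r_T^{1-1/q}$. The other side has index at most $m$, so its spacing is no larger, by monotonicity of $t_{j+1}-t_j$ in $j$ (convexity of $t^q$). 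This yields \eqref{eq:graded-local-diameter-lemma}.

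\textbf{Main obstacle.} The only delicate point is the index bookkeeping. Elements in a strip along an axis have one side of length $n^{-q}$ and may be far from the corner in the other direction. The proof must take the maximum of the two indices, not the minimum, both when bounding $r_T$ from below and when bounding $h_T$ from above. I would organise the argument around $m=\max(|j|,|k|)$ from the start.
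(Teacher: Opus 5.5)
Your overall route is the same as the paper's. You reduce to the one-dimensional spacings $t_{j+1}-t_j$, apply the mean value theorem with $m+1\le 2m$, and bound $r_T$ below by a near-side coordinate. Steps~1 and~2 are fine. The gap is in Step~3, where you write that $T\notin\mathcal P_n$ forces $\max(|j|,|k|)\ge 1$. This is false. A grid square with the origin as a vertex may be cut along the diagonal joining its two axis vertices $(\pm n^{-q},0)$ and $(0,\pm n^{-q})$. One of its two triangles then does not contain the origin, so it lies outside $\mathcal P_n$ even though both near-side indices are $0$.

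This case actually occurs. The construction only says each rectangle is split along one of its diagonals. With any fixed diagonal direction, at least one of the three retained corner squares is cut this way; with slope-$+1$ diagonals it is $[-n^{-q},0]\times[0,n^{-q}]$. For that triangle $m=0$, so your bound $r_T\ge t_m$ gives only $r_T\ge t_0=0$. Neither $r_T\ge cn^{-q}$ nor $h_T\le Cn^{-1}r_T^{1-1/q}$ then follows from your argument. Your ``main obstacle'' paragraph locates the difficulty in the axis strips. Those are handled correctly by your $m=\max$ bookkeeping; the corner squares are the real delicate point.

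The fix is a direct computation, and it is exactly the extra case the paper treats explicitly. The triangle is separated from the origin by its hypotenuse, which lies on the line $|x|+|y|=n^{-q}$ within that quadrant. Hence
\begin{align*}
  r_T=\frac{n^{-q}}{\sqrt2},
  \quad
  h_T=\sqrt2\,n^{-q}.
\end{align*}
Since $n^{-1}(n^{-q})^{1-1/q}=n^{-q}$, this gives $r_T\ge cn^{-q}$ and $h_T\le 2n^{-1}r_T^{1-1/q}$.

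Alternatively, use the paper's bookkeeping, which handles all non-corner triangles uniformly once $r_T\ge n^{-q}/\sqrt2$ is known. Bound each side $I_x$, $I_y$ separately according to whether its distance $b$ from $0$ is positive or zero. If $b>0$, apply your mean value estimate with $b\le r_T$. If $b=0$, then $|I|=n^{-q}\le Cn^{-1}r_T^{1-1/q}$. With this case added, the rest of your argument goes through.
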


\begin{proof}
We fix $q>1$, and set
\begin{align*}
a_j:=\left(\frac{j}{n}\right)^q,
\quad
\Delta_j:=a_{j+1}-a_j,
\quad 0\le j\le n-1.
\end{align*}
By symmetry, the numbers $\Delta_j$ are the one-dimensional mesh widths on both sides of the origin. For $1\le j\le n-1$, the mean-value theorem gives
\begin{align*}
\Delta_j
=\frac{q}{n}\zeta_j^{\,q-1},
\quad
\zeta_j\in
\left(\frac{j}{n},\frac{j+1}{n}\right).
\end{align*}
Because $(j+1)/j\le2$,
\begin{align}\label{eq:graded-width-proof}
\Delta_j
\le C_qn^{-1}a_j^{\,1-1/q}.
\end{align}
For the first interval,
\begin{align*}
\Delta_0=a_1=n^{-q}.
\end{align*}
Let $T\notin\mathcal P_n$, and let
\begin{align*}
R=[x_\ell,x_{\ell+1}]\times[y_m,y_{m+1}]
=:I_x\times I_y
\end{align*}
be the graded rectangle containing $T$. If $R$ does not have the origin as a vertex, then at least one coordinate has absolute value at least $a_1$ throughout $R$. If $R$ has the origin as a vertex and $T\notin\mathcal P_n$, then $T$ is the triangle separated from the origin by the diagonal joining the two adjacent axis vertices. Its distance from the origin is $a_1/\sqrt2$. Thus, in either case,
\begin{align}\label{eq:graded-distance-proof}
r_T\ge\frac{a_1}{\sqrt2}
=\frac{n^{-q}}{\sqrt2}.
\end{align}

We set
\begin{align*}
b_x:=\operatorname{dist}(I_x,0)
=\min_{s\in I_x}|s|,
\quad
b_y:=\operatorname{dist}(I_y,0)
=\min_{s\in I_y}|s|.
\end{align*}
We consider first the side length in the $x$-direction. If $b_x > 0$, then $b_x=a_j$ for some $j\ge1$. Since $b_x\le r_T$, \eqref{eq:graded-width-proof} gives
\begin{align*}
|I_x|
\le C_qn^{-1}b_x^{\,1-1/q}
\le C_qn^{-1}r_T^{\,1-1/q}.
\end{align*}
If $b_x=0$, then $|I_x|=n^{-q}$, and \eqref{eq:graded-distance-proof} gives
\begin{align*}
|I_x|
\le C_qn^{-1}r_T^{\,1-1/q}.
\end{align*}
The same argument applies to $I_y$. Consequently,
\begin{align*}
h_T
\le \left(|I_x|^2+|I_y|^2\right)^{1/2}
\le C_qn^{-1}r_T^{\,1-1/q},
\end{align*}
which proves \eqref{eq:graded-local-diameter-lemma}.

Only the three retained rectangles adjacent to the origin can contain elements of $\mathcal P_n$, and each of these rectangles has side length $n^{-q}$. The patch is therefore contained in $[-n^{-q},n^{-q}]^2$ and contains a triangle of diameter $\sqrt2\,n^{-q}$. Therefore,
\begin{align*}
\operatorname{diam}(\omega_n)\simeq n^{-q}.
\end{align*}

Finally, $\phi'(t)=qt^{q-1}\le q$ on $[0,1]$, so any one-dimensional mesh width is at most $q/n$. Thus, $h_n\le C_qn^{-1}$. Conversely,
\begin{align*}
\frac1n
\le
1-\left(1-\frac1n\right)^q
\le
\frac qn.
\end{align*}
The outermost interval occurs as a side of a retained rectangle and hence gives a triangle whose diameter is at least this interval length. Therefore, $h_n\ge n^{-1}$, and
\begin{align*}
h_n\simeq n^{-1}.
\end{align*}
\end{proof}

We set
\begin{align*}
  \delta_n:=\operatorname{diam}(\omega_n),
\end{align*}
so that Lemma~\ref{lem:graded-geometry} gives
\begin{align*}
  \delta_n\simeq n^{-q},
  \quad
  h_T\lesssim n^{-1}r_T^{1-1/q}
  \quad (T\notin\mathcal P_n),
\end{align*}
where $r_T=\operatorname{dist}(T,0)$. Balancing this local grading law with $|D^2S|\simeq r^{\alpha-2}$ yields the threshold below. The argument uses the grading law and uniform semi-regularity, not the large aspect ratios themselves; shape-regular isotropically graded corner meshes satisfy an analogous local law.

\begin{lemma}[Approximation of the corner singular function]\label{lem:graded-singular-approx}
Let $q>1/\alpha$.  If $I_n^c$ is the nodal $\Pone$ interpolant and $I_n^{\RT}\nabla S$ is the lowest-order RT interpolant defined by its face-flux degrees of freedom, then
\begin{align}\label{eq:graded-singular-approx}
  |S-I_n^cS|_{H^1(\Omega_L)}
  +\|\nabla S-I_n^{\RT}\nabla S\|_{L^2(\Omega_L)^2}
  \le Cn^{-1}.
\end{align}
Here, $C$ may depend on the fixed grading exponent $q$ and the cut-off function $\eta$, but is independent of $n$.
\end{lemma}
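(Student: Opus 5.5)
The plan is to split the error elementwise into the corner patch $\mathcal P_n$ and the remaining elements $T\notin\mathcal P_n$. On the patch we use only stability and the smallness of $S$. Away from the patch we use the directional interpolation estimates invoked in the proof of Proposition~\ref{prop:anisotropic-directional-comparison}, together with the grading law of Lemma~\ref{lem:graded-geometry}. Both interpolants are local: $I_n^c$ and $I_n^{\RT}$ on $T$ depend only on $S|_T$, respectively on the face fluxes of $\nabla S$ on $\partial T$. These are well defined because $S$ is continuous and $\nabla S$ is integrable on every face, since $|\nabla S|\lesssim r^{\alpha-1}$. Hence the squared errors can be summed over elements.

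\emph{Away from the corner.} For $T\notin\mathcal P_n$, $S$ is smooth on $T$ because $r_T\ge c n^{-q}>0$. Since $H_T/h_T=2$ and $h_{T,i}\le h_T$, the local Lagrange and RT estimates give
\begin{align*}
  |S-I_n^cS|_{H^1(T)}+\|\nabla S-I_n^{\RT}\nabla S\|_{L^2(T)^2}
  \le C\bigl(h_T|S|_{H^2(T)}+h_T\|\varDelta S\|_{L^2(T)}\bigr).
\end{align*}
Here $\varDelta S$ vanishes for $r\le r_0$ and is bounded, so its contribution is $O(n^{-1})$. Next, $h_T\le Cn^{-1}r_T^{1-1/q}\le C' r_T$, since $r_T^{1/q}\gtrsim n^{-1}$. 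Thus $r\simeq r_T$ on $T$, and $h_T^2\lesssim n^{-2}r^{2-2/q}$ pointwise on $T$. Using $|D^2S|\lesssim r^{\alpha-2}$, summing over $T$ gives
\begin{align*}
  \sum_{T\notin\mathcal P_n}h_T^2|S|_{H^2(T)}^2
  \lesssim n^{-2}\int_{cn^{-q}}^{2} r^{2-2/q}\,r^{2\alpha-4}\,r\,dr
  \lesssim n^{-2}\int_0^2 r^{2\alpha-2/q-1}\,dr .
\end{align*}
The last integral is finite exactly because $q>1/\alpha$. This is the balancing step in which the threshold enters.

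\emph{On the corner patch.} The elements of $\mathcal P_n$ are right triangles whose legs equal $n^{-q}$, so they are shape regular of size $\delta_n\simeq n^{-q}$. The triangle inequality gives
\begin{align*}
  |S-I_n^cS|_{H^1(\omega_n)}\le|S|_{H^1(\omega_n)}+|I_n^cS|_{H^1(\omega_n)} .
\end{align*}
The first term is bounded by $\bigl(\int_0^{C\delta_n}r^{2\alpha-1}dr\bigr)^{1/2}\lesssim\delta_n^\alpha$. For the second term, $|\nabla I_n^cS|_T|\lesssim \delta_n^{-1}\max_T|S-S(0)|\lesssim\delta_n^{\alpha-1}$, which gives $\lesssim\delta_n^\alpha$ after multiplying by $|T|^{1/2}$. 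For the RT part, each face flux satisfies $|\int_F\nabla S\cdot\bm n|\lesssim\int_0^{C\delta_n}r^{\alpha-1}dr\lesssim\delta_n^\alpha$. On these shape-regular elements the RT basis functions have $L^2(T)$ norm $O(1)$, by scaling $|F|\cdot|T|^{-1}\cdot h_T\cdot|T|^{1/2}$. Hence $\|I_n^{\RT}\nabla S\|_{L^2(\omega_n)}\lesssim\delta_n^\alpha$ and $\|\nabla S\|_{L^2(\omega_n)}\lesssim\delta_n^\alpha$. Altogether the patch contributes $\lesssim n^{-q\alpha}\le n^{-1}$, again because $q\alpha>1$.

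Adding the two parts gives \eqref{eq:graded-singular-approx}. The main obstacle is the off-patch sum. There we must ensure that the quoted anisotropic estimates apply with constants independent of the aspect ratio, which needs only $H_T/h_T=2$. We must also justify replacing $r_T$ by $r$ inside each element, via $h_T\lesssim r_T$. Elements adjacent to the patch, where $r_T\simeq n^{-q}$, are the delicate case. There I would check directly that $h_T\lesssim n^{-q}$ using $\Delta_0=\Delta_1/(2^q-1)$-type comparisons, or else absorb such elements into an enlarged patch treated by the stability argument.
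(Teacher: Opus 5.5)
Your proposal is correct. Away from the corner it follows the paper's argument: the directional Lagrange and RT estimates with $H_T/h_T=2$, the bound $h_T\le Cn^{-1}r_T^{1-1/q}\le Cr_T$ from Lemma~\ref{lem:graded-geometry}, and the radial integral $\int r^{2\alpha-1-2/q}\,dr$, which is bounded exactly when $q>1/\alpha$. You treat $h_T\|\varDelta S\|_{L^2(T)}$ separately, using harmonicity of $S$ for $r\le r_0$, whereas the paper absorbs it into $\|D^2S\|$; this difference is inessential.

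The genuine difference is on the corner patch. The paper uses two exact facts: $S=S_0$ is $\alpha$-homogeneous on $\omega_n$, and each patch triangle is a dilate $n^{-q}\widehat T$ of one of finitely many reference triangles. Affine covariance of $I^c$ and Piola covariance of $I^{\RT}$ then give the identity (local error) $=n^{-q\alpha}\times$(reference error), and the finitely many reference errors are finite. You instead bound $S$ and each interpolant separately by stability. For this you need only the pointwise bounds $|S|\lesssim r^\alpha$ and $|\nabla S|\lesssim r^{\alpha-1}$, plus shape regularity of the patch elements at scale $\delta_n$.

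Your route is more elementary and more robust. It does not use exact homogeneity, so you need not wait until $\eta\equiv1$ on $\omega_n$. It also does not use the exact dilation structure, so it would apply to any shape-regular corner patch of diameter $\simeq\delta_n$. The paper's route is shorter once covariance is available, and it produces an exact scaling law rather than an upper bound.

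There is one bookkeeping slip in your RT step. The product $|F|\,|T|^{-1}h_T|T|^{1/2}$ you quote is $O(\delta_n)$, not $O(1)$; it is the basis norm for face-mean degrees of freedom. With the face-flux degrees of freedom $\int_F\bm v\cdot\bm n\,ds$, the dual basis function is $(\bm x-\bm p_F)/(2|T|)$, where $\bm p_F$ is the vertex opposite $F$, and its norm is $O(1)$. In either normalisation the product with the matching degree of freedom is $O(\delta_n^\alpha)$, so your conclusion stands.

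Your closing worry about elements adjacent to the patch is unnecessary. Lemma~\ref{lem:graded-geometry} already gives $h_T\le Cn^{-1}r_T^{1-1/q}$ for every $T\notin\mathcal P_n$, including those with $r_T\simeq n^{-q}$; this is the case $b_x=0$ or $b_y=0$ in its proof. No enlarged patch is needed.
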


\begin{proof}
Recall that
\begin{align*}
  \delta_n =\operatorname{diam}(\omega_n)\simeq n^{-q}
\end{align*}
as in Lemma~\ref{lem:graded-geometry}. We divide the proof into the corner patch $\omega_n$ and the remaining elements.

\medskip
\noindent
\emph{Step 1: regularity of the singular function near the corner.}
Since $\eta(r)=1$ for $r\le r_0$, the singular function has the form
\begin{align*}
  S(r,\theta)=r^\alpha\sin(\alpha\theta),
  \quad \alpha=\frac23,
\end{align*}
and thus
\begin{align*}
  |\nabla S|\le Cr^{\alpha-1},
  \quad
  |D^2S|\le Cr^{\alpha-2}.
\end{align*}
Because $\alpha>0$,
\begin{align*}
  \int_0^{r_0}r^{\alpha-2}r\,dr
  =
  \int_0^{r_0}r^{\alpha-1}\,dr
  <\infty.
\end{align*}
The terms involving derivatives of the cut-off are smooth and supported away from the corner. Since $S$ is smooth in $\Omega_L$ and its classical derivatives up to order two belong to $L^1(\Omega_L)$, they represent its distributional derivatives. Hence,
\begin{align*}
  S\in W^{2,1}(\Omega_L),
  \quad
  \nabla S\in W^{1,1}(\Omega_L)^2.
\end{align*}
Moreover, $S$ extends continuously to $\overline{\Omega_L}$ with $S(0)=0$, so its nodal interpolant is well defined. For every $T\in\mathbb T_n^{(q)}$, the trace theorem for $W^{1,1}(T)$ shows that $\nabla S|_T$ has an $L^1$ trace on each edge. Therefore, the face-flux degrees of freedom defining the lowest-order RT interpolant of $\nabla S$ are also well defined.

The same singular function is not in $H^2(\Omega_L)$. Indeed, in a neighbourhood of the corner where $\eta\equiv1$, we write
\begin{align*}
  S_0(r,\theta)=r^\alpha\sin(\alpha\theta).
\end{align*}
A direct calculation gives
\begin{align*}
  |D^2S_0|_F^2
  =2\alpha^2(1-\alpha)^2r^{2\alpha-4}.
\end{align*}
Therefore, for sufficiently small $\varepsilon>0$,
\begin{align*}
  \int_{\Omega_L\cap B_\varepsilon(0)} |D^2S|_F^2\,dx
  =C_\alpha\int_0^\varepsilon r^{2\alpha-3}\,dr
  =\infty
  \quad (\alpha=2/3).
\end{align*}
Consequently,
\begin{align*}
  S\in W^{2,1}(\Omega_L)\setminus H^2(\Omega_L).
\end{align*}
Thus, the elements touching the re-entrant corner cannot be treated by the usual $H^2$-based interpolation estimate and have to be handled separately.

\medskip
\noindent
\emph{Step 2: approximation on the corner patch.}
For all sufficiently large $n$, the cut-off satisfies $\eta\equiv1$ on $\omega_n$, as observed in Step~1. Thus, $S$ agrees there with the homogeneous function $S_0$ defined above.
Set $\varepsilon_n:=n^{-q}$. Every $T\in\mathcal P_n$ has the origin as a vertex and lies in one of the three retained grid squares adjacent to the origin. Hence,
\begin{align*}
  T=\varepsilon_n\widehat T
\end{align*}
for some $\widehat T$ in the fixed finite family $\widehat{\mathcal P}$ of possible corner triangles on the corresponding unit grid. In particular, $\varepsilon_n\simeq\delta_n$. For $\bm x=\varepsilon_n\widehat{\bm x}\in T$, homogeneity gives
\begin{align*}
  S(\bm x)
  =
  S_0(\varepsilon_n\widehat{\bm x})
  =
  \varepsilon_n^\alpha S_0(\widehat{\bm x}).
\end{align*}
The affine covariance of the nodal interpolant therefore gives
\begin{align*}
  (I_T^cS)(\varepsilon_n\widehat{\bm x})
  =
  \varepsilon_n^\alpha
  (I_{\widehat T}^cS_0)(\widehat{\bm x}),
\end{align*}
and the change of variables $\bm x=\varepsilon_n\widehat{\bm x}$ yields
\begin{align*}
  |S-I_T^cS|_{H^1(T)}
  =
  \varepsilon_n^\alpha
  |S_0-I_{\widehat T}^cS_0|_{H^1(\widehat T)}
  \le C\varepsilon_n^\alpha.
\end{align*}
Likewise,
\begin{align*}
  \nabla S(\varepsilon_n\widehat{\bm x})
  =
  \varepsilon_n^{\alpha-1}
  \nabla S_0(\widehat{\bm x}),
\end{align*}
and the contravariant Piola covariance of the lowest-order RT interpolation gives
\begin{align*}
  (I_T^{\RT}\nabla S)(\varepsilon_n\widehat{\bm x})
  =
  \varepsilon_n^{\alpha-1}
  (I_{\widehat T}^{\RT}\nabla S_0)(\widehat{\bm x}).
\end{align*}
Consequently,
\begin{align*}
  \|\nabla S-I_T^{\RT}\nabla S\|_{L^2(T)^2}
  &=
  \varepsilon_n^\alpha
  \|\nabla S_0-I_{\widehat T}^{\RT}\nabla S_0\|_
       {L^2(\widehat T)^2}
  \le C\varepsilon_n^\alpha.
\end{align*}
Here, $S_0\in H^1(\widehat T)\cap \mathcal{C}^0(\overline{\widehat T})$ and $\nabla S_0\in W^{1,1}(\widehat T)^2\cap L^2(\widehat T)^2$, so the reference-element errors are finite. They are bounded uniformly because $\widehat{\mathcal P}$ is finite. Since the number of corner elements is also bounded independently of $n$ and $\varepsilon_n\simeq\delta_n$, we obtain
\begin{align}\label{eq:corner-patch-bound}
  |S-I_n^cS|_{H^1(\omega_n)}
  +\|\nabla S-I_n^{\RT}\nabla S\|_{L^2(\omega_n)^2}
  \le C\delta_n^\alpha.
\end{align}
Using $\delta_n\simeq n^{-q}$ and $q\alpha>1$,
\begin{align*}
  \delta_n^\alpha
  \le Cn^{-q\alpha}
  \le Cn^{-1}.
\end{align*}
The finitely many smaller values of $n$ are covered by increasing the constant $C$.

\medskip
\noindent
\emph{Step 3: approximation away from the corner.}
Let $T\notin\mathcal P_n$ and recall that
\begin{align*}
  r_T =\operatorname{dist}(T,0).
\end{align*}
From Lemma~\ref{lem:graded-geometry},
\begin{align*}
  r_T\ge cn^{-q},
  \quad
  h_T\le Cn^{-1}r_T^{1-1/q}.
\end{align*}
Consequently,
\begin{align*}
  \frac{h_T}{r_T}
  \le Cn^{-1}r_T^{-1/q}
  \le C,
\end{align*}
and hence
\begin{align*}
  h_T\le Cr_T.
\end{align*}
More precisely, for any $\bm x\in T$,
\begin{align*}
  r_T
  \le |\bm x|
  \le r_T+h_T
  \le Cr_T.
\end{align*}
Thus, the radial variable $r=|\bm x|$ is comparable with $r_T$ on each non-corner element. Furthermore, $S|_T\in H^2(T)$ and $\nabla S|_T\in H^1(T)^2$ because $r_T>0$. The elements are right triangles and satisfy $H_T/h_T=2$, while $h_{T,i}\le h_T$. The directional Lagrange estimate \citep[Theorem~16.7]{Ishizaka2025Interpolation} (see also \citealp{IshizakaKobayashiTsuchiya2023Interpolation}), specialised to the $L^2$ case with $m=1$ and $\ell=2$, therefore gives
\begin{align*}
  |S-I_n^cS|_{H^1(T)}
  &\le
  C\frac{H_T}{h_T}
  \sum_{i=1}^2
  h_{T,i}|\partial_{\bm r_{T,i}}S|_{H^1(T)}
  \le
  Ch_T\|D^2S\|_{L^2(T)^{2\times2}}.
\end{align*}
Similarly, the lowest-order RT estimate \citep[Theorem~20.14]{Ishizaka2025Interpolation} (see also \citealp{Ishizaka2022RT}), with $k=\ell=0$, gives
\begin{align*}
  \|\nabla S-I_n^{\RT}\nabla S\|_{L^2(T)^2}
  &\le
  C\frac{H_T}{h_T}
  \sum_{i=1}^2
  h_{T,i}
  \|\partial_{\bm r_{T,i}}\nabla S\|_{L^2(T)^2}
  +Ch_T\|\varDelta S\|_{L^2(T)}
  \\
  &\le
  Ch_T\|D^2S\|_{L^2(T)^{2\times2}},
\end{align*}
where $\|\varDelta S\|_{L^2(T)}\le \sqrt{2}\,\|D^2S\|_{L^2(T)^{2\times2}}$ was used in the last step. Consequently,
\begin{align*}
  |S-I_n^cS|_{H^1(T)}^2
  +\|\nabla S-I_n^{\RT}\nabla S\|_{L^2(T)^2}^2
  \le
  Ch_T^2\|D^2S\|_{L^2(T)^{2\times2}}^2.
\end{align*}
Near the corner,
\begin{align*}
  |D^2S|\le Cr^{\alpha-2},
\end{align*}
whereas in the fixed cut-off region away from the corner the second derivatives of $S$ are uniformly bounded. Moreover, $r_T\ge c\delta_n$ for every non-corner element, and $S$ vanishes for $r\ge r_1<1$. Consequently, using the local diameter estimate and polar coordinates,
\begin{align*}
 \sum_{T\notin\mathcal P_n}
 h_T^2\|D^2S\|_{L^2(T)^{2\times2}}^2
 &\le
 Cn^{-2}
 \left(
 1+
 \int_{c\delta_n}^{1}
 r^{2(1-1/q)}
 r^{2(\alpha-2)}
 r\,dr
 \right)
 \\
 & =
 Cn^{-2}
 \left(
 1+
 \int_{c\delta_n}^{1}
 r^{2\alpha-1-2/q}\,dr
 \right).
\end{align*}
The last integral is bounded uniformly in $n$ precisely when
\begin{align*}
  2\alpha-1-\frac2q>-1,
\end{align*}
or equivalently,
\begin{align*}
  \alpha>\frac1q.
\end{align*}
This is exactly the assumption $q>1/\alpha$.  Therefore,
\begin{align*}
  A_n^2+B_n^2
  \le Cn^{-2},
\end{align*}
where
\begin{align*}
  A_n
  &:=
  |S-I_n^cS|_{H^1(\Omega_L\setminus\omega_n)}, \quad
  B_n
  :=
  \|\nabla S-I_n^{\RT}\nabla S\|_
       {L^2(\Omega_L\setminus\omega_n)^2}.
\end{align*}
It follows that
\begin{align*}
  A_n+B_n
  \le
  \sqrt{2}\,(A_n^2+B_n^2)^{1/2}
  \le Cn^{-1}.
\end{align*}
Combining this estimate with \eqref{eq:corner-patch-bound} proves \eqref{eq:graded-singular-approx}.
\end{proof}

\begin{theorem}[Recovery of the computable order on the L-shaped domain]\label{thm:graded-Lshape}
Let $\Omega=\Omega_L$, $\alpha=2/3$, and let $\{\mathbb T_n^{(q)}\}$ be the graded family above with
\begin{align*}
  q>\frac1\alpha=\frac32.
\end{align*}
Then, there exists $C>0$, independent of $n$, such that
\begin{align}\label{eq:graded-kappa-order}
  \kappa_{M,n}\le Ch_n,
  \quad
  \widehat M_n\le Ch_n,
\end{align}
and both discrete solution operators satisfy
\begin{align}\label{eq:graded-L2-both}
  \|J-P_nJ\|_{\cL(L^2(\Omega_L),L^2(\Omega_L))}
  +\|J-J_n^{\CR}\|_{\cL(L^2(\Omega_L),L^2(\Omega_L))}
  \le Ch_n^2.
\end{align}
In particular, the quadratic grading $q=2$ used in Section~\ref{subsec:lshape-graded} is admissible.
\end{theorem}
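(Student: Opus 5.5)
The plan is to bound $\kappa_{M,n}$ through the comparison inequality \eqref{eq:smooth-anisotropic-comparison}, using the corner decomposition \eqref{eq:corner-decomposition} to split $z=Jg_h$ into a regular and a singular part. Fix $0\ne g_h\in X_n$ and write $z:=Jg_h=z_R+c_gS$ with $\|z_R\|_{H^2(\Omega_L)}+|c_g|\le C_{\rm sing}\|g_h\|_{L^2(\Omega_L)}$.

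\textbf{Step 1: admissibility of the interpolants.} Both interpolants are linear and well defined on each part. For $z_R\in H^2\cap H_0^1$ the nodal interpolant exists by Sobolev embedding in two dimensions, and $\nabla z_R\in H^1$ has edge traces. For $S$, Step~1 of Lemma~\ref{lem:graded-singular-approx} gives the required continuity and the $W^{1,1}$ regularity. Hence
\begin{align*}
  v_h:=I_n^cz=I_n^cz_R+c_gI_n^cS,
  \quad
  \bm\tau_h:=I_n^{\RT}\nabla z
\end{align*}
is an admissible pair. The commuting property gives $\divop\bm\tau_h=\PiZero\varDelta z=-g_h$, so $\bm\tau_h\in\mathcal Q_n(g_h)$. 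Then \eqref{eq:smooth-anisotropic-comparison} and the triangle inequality yield
\begin{align*}
  \|\bm\sigma_n^M(g_h)-\nabla z_n^c(g_h)\|
  \le{}& |z_R-I_n^cz_R|_{H^1}+\|\nabla z_R-I_n^{\RT}\nabla z_R\|\\
  &+|c_g|\bigl(|S-I_n^cS|_{H^1}+\|\nabla S-I_n^{\RT}\nabla S\|\bigr).
\end{align*}

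\textbf{Step 2: bounding the two parts.} The mesh family is uniformly semi-regular, since $H_T/h_T=2$. So \eqref{eq:semi-regular-interpolation-comparison} applies and bounds the $z_R$ terms by $Ch_n\|z_R\|_{H^2}$. Lemma~\ref{lem:graded-singular-approx} with $q>1/\alpha$ bounds the $S$ terms by $Cn^{-1}$, and Lemma~\ref{lem:graded-geometry} gives $n^{-1}\simeq h_n$. Combining with the continuity of the decomposition gives a bound $Ch_n\|g_h\|$. Taking the supremum over $g_h$ yields $\kappa_{M,n}\le Ch_n$. Then $\widehat M_n^2=(h_n/\pi)^2+\kappa_{M,n}^2\le C h_n^2$.

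\textbf{Step 3: $L^2$-operator bounds.} By Theorem~\ref{thm:L2} and $M_n\le\widehat M_n$, we get $\|J-P_nJ\|_{\cL(L^2,L^2)}\le M_n^2\le Ch_n^2$. Theorem~\ref{thm:CR-L2}, through the bound $\|E_h^{\CR}\|\le M_h^2$ established in its proof, gives the same bound for $J-J_n^{\CR}$. For $q=2$ we have $2>3/2$, so the quadratic grading is admissible.

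\textbf{Main obstacle.} The delicate point is Step~1. One must confirm that the commuting identity $\divop I_n^{\RT}\nabla z=\PiZero\varDelta z$ holds for the non-$H^2$ field $\nabla S$. I would verify it elementwise via the divergence theorem: the face fluxes are $L^1$ traces of a $W^{1,1}(T)$ field, so Gauss' theorem holds on each $T$. A second point is that Proposition~\ref{prop:anisotropic-directional-comparison} and \eqref{eq:semi-regular-interpolation-comparison} assume two-dimensional directional estimates; their constants must be independent of the aspect ratio, which grows like $n^{q-1}$. That independence is exactly what these estimates provide under $H_T/h_T$ bounded.
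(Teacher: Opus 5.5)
Your proposal is correct and follows the paper's proof essentially step for step. Both use the corner decomposition, the semi-regular interpolation estimate for $z_R$ together with Lemma~\ref{lem:graded-singular-approx} for $S$, an elementwise Green formula with $W^{1,1}$ face traces to show $I_n^{\RT}\nabla z\in\mathcal Q_n(g_n)$, the hypercircle comparison of Corollary~\ref{cor:joint-hypercircle}, and finally Theorems~\ref{thm:L2} and~\ref{thm:CR-L2}. The only points the paper spells out and you leave implicit are that $I_n^cz$ has vanishing boundary nodal values, because $z\in H_0^1(\Omega_L)\cap\mathcal C^0(\overline{\Omega_L})$, and that global $W^{1,1}$ regularity of $\nabla z$ makes the RT face fluxes agree across interior faces.
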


\begin{proof}
Let $g_n\in X_n$ and set $z:=Jg_n$. We use the corner decomposition to construct an admissible conforming--RT pair for Corollary~\ref{cor:joint-hypercircle}.

\medskip
\noindent
\emph{Step 1: approximation of the regular and singular parts.}
By the corner decomposition \eqref{eq:corner-decomposition},
\begin{align*}
  z=z_R+c_{g_n}S,
  \quad
  \|z_R\|_{H^2(\Omega_L)}+|c_{g_n}|
  \le C\|g_n\|_{L^2(\Omega_L)}.
\end{align*}
Because $z_R\in H^2(\Omega_L)$, the semi-regular interpolation estimates on the present right-triangle family give
\begin{align}\label{eq:graded-regular-approx}
  |z_R-I_n^cz_R|_{H^1(\Omega_L)}
  +\|\nabla z_R-I_n^{\RT}\nabla z_R\|_{L^2(\Omega_L)^2}
  \le Ch_n\|z_R\|_{H^2(\Omega_L)}.
\end{align}
For the singular part, Lemma~\ref{lem:graded-singular-approx} gives
\begin{align*}
  |S-I_n^cS|_{H^1(\Omega_L)}
  +\|\nabla S-I_n^{\RT}\nabla S\|_{L^2(\Omega_L)^2}
  \le Cn^{-1}.
\end{align*}
Since $h_n\simeq n^{-1}$ by Lemma~\ref{lem:graded-geometry}, this is bounded by $Ch_n$.  By the linearity of both interpolation operators and the stability of the corner decomposition, we therefore obtain
\begin{align}\label{eq:graded-full-approx}
  |z-I_n^cz|_{H^1(\Omega_L)}
  +\|\nabla z-I_n^{\RT}\nabla z\|_{L^2(\Omega_L)^2}
  \le Ch_n\|g_n\|_{L^2(\Omega_L)}.
\end{align}
The conforming interpolant $I_n^cz$ is well defined. Indeed, $z_R\in H^2(\Omega_L)\hookrightarrow \mathcal{C}^0(\overline{\Omega_L})$ in two dimensions and $S$ is continuous. Furthermore, $z\in H_0^1(\Omega_L)\cap \mathcal{C}^0(\overline{\Omega_L})$, so its boundary nodal values vanish. Therefore,
\begin{align}\label{eq:graded-conforming-admissible}
  I_n^cz\in V_n^c.
\end{align}

\medskip
\noindent
\emph{Step 2: the RT interpolant is an equilibrated flux.}
The proof of Lemma~\ref{lem:graded-singular-approx} shows that
\begin{align*}
  S\in W^{2,1}(\Omega_L),
  \quad
  \nabla S\in W^{1,1}(\Omega_L)^2.
\end{align*}
Because $z_R\in H^2(\Omega_L)$ and $\Omega_L$ is bounded,
\begin{align*}
  \nabla z_R\in W^{1,1}(\Omega_L)^2.
\end{align*}
Consequently,
\begin{align}\label{eq:graded-global-W11}
  \nabla z\in W^{1,1}(\Omega_L)^2.
\end{align}
In particular, the face traces of $\nabla z$ are well defined and agree from the two sides of every interior face. Thus, the canonical lowest-order RT face-flux degrees of freedom are globally compatible. Let $T\in\mathbb T_n^{(q)}$. Since $I_n^{\RT}\nabla z|_T\in\RT^0(T)$, its divergence is constant on $T$. Using Green's formula and preservation of the RT face fluxes gives
\begin{align*}
  |T|\,\divop(I_n^{\RT}\nabla z)|_T
  &=
  \int_T\divop(I_n^{\RT}\nabla z)\,dx
  =
  \sum_{F\subset\partial T}
  \int_F I_n^{\RT}\nabla z\cdot\bm n_T\,ds
\\
  &=
  \sum_{F\subset\partial T}
  \int_F \nabla z\cdot\bm n_T\,ds
 =
  \int_T\varDelta z\,dx.
\end{align*}
Because $z=Jg_n$ satisfies $-\varDelta z=g_n$ and $g_n$ is constant on each element,
\begin{align*}
  \divop(I_n^{\RT}\nabla z)|_T=-g_n|_T.
\end{align*}
Furthermore, \eqref{eq:graded-global-W11} implies that the face fluxes used on the two elements adjacent to an interior face coincide. Therefore, the elementwise RT interpolants assemble into a globally normal-continuous field, and hence
\begin{align}\label{eq:graded-equilibrated-interpolant}
  I_n^{\RT}\nabla z\in\mathcal Q_n(g_n).
\end{align}

\medskip
\noindent
\emph{Step 3: hypercircle minimisation and the computable constant.}
By \eqref{eq:graded-conforming-admissible} and \eqref{eq:graded-equilibrated-interpolant}, the pair
\begin{align*}
  (I_n^cz,I_n^{\RT}\nabla z)
  \in V_n^c\times\mathcal Q_n(g_n)
\end{align*}
is admissible in Corollary~\ref{cor:joint-hypercircle}. Therefore,
\begin{align*}
  \|\bm\sigma_n^M(g_n)-\nabla z_n^c(g_n)\|_{L^2(\Omega_L)^2}
  &\le
  \|I_n^{\RT}\nabla z-\nabla I_n^cz\|_{L^2(\Omega_L)^2}
\\
  &\le
  \|\nabla z-I_n^{\RT}\nabla z\|_{L^2(\Omega_L)^2}
  +|z-I_n^cz|_{H^1(\Omega_L)}
\\
  &\le
  Ch_n\|g_n\|_{L^2(\Omega_L)},
\end{align*}
where the last inequality follows from \eqref{eq:graded-full-approx}. Taking the supremum over $0\ne g_n\in X_n$ yields
\begin{align*}
  \kappa_{M,n}\le Ch_n.
\end{align*}
By the definition of the diameter-explicit constant,
\begin{align*}
  \widehat M_n^2
  =
  \frac{h_n^2}{\pi^2}+\kappa_{M,n}^2
  \le Ch_n^2,
\end{align*}
and hence
\begin{align*}
  \widehat M_n\le Ch_n.
\end{align*}

Finally, the $L^2$ estimates of the conforming and the CR problems proved above give,
for every $f\in L^2(\Omega_L)$,
\begin{align*}
  \|(J-P_nJ)f\|_{L^2(\Omega_L)}
  &\le
  \widehat M_n^2\|f\|_{L^2(\Omega_L)},\\
  \|(J-J_n^{\CR})f\|_{L^2(\Omega_L)}
  &\le
  \widehat M_n^2\|f\|_{L^2(\Omega_L)}.
\end{align*}
Dividing by $\|f\|_{L^2(\Omega_L)}$ and taking the supremum over $0\ne f\in L^2(\Omega_L)$ gives
\begin{align*}
  \|J-P_nJ\|_{\cL(L^2(\Omega_L),L^2(\Omega_L))}
  +
  \|J-J_n^{\CR}\|_{\cL(L^2(\Omega_L),L^2(\Omega_L))}
  \le Ch_n^2.
\end{align*}
This proves \eqref{eq:graded-kappa-order} and
\eqref{eq:graded-L2-both}.
\end{proof}

\begin{remark}[Critical grading and scope]
The condition $q>1/\alpha$ is exactly the threshold for uniform boundedness of the radial integral in the estimate above. At the critical value $q=1/\alpha$, the radial integral grows like $\log n$. Because $h_n\simeq n^{-1}$, the same argument gives
\begin{align*}
  &\kappa_{M,n}
  +\widehat M_n
  =
  O\!\left(h_n\sqrt{|\log h_n|}\right),\\
  &\|J-P_nJ\|_{\cL(L^2,L^2)}
  +\|J-J_n^{\CR}\|_{\cL(L^2,L^2)}
  =
  O\!\left(h_n^2|\log h_n|\right).
\end{align*}
The theorem is a mesh-family result for this corner and this grading mechanism. It does not assert a general aspect-ratio-independent rate theorem for arbitrary non-convex domains or graded meshes.
\end{remark}

\section{Numerical experiments}\label{sec:numerics}
We consider uniform and stretched square meshes, quasi-uniform and graded meshes on the L-shaped domain, and two tetrahedral mesh families. The computations use double precision, sparse scalar solves, and a largest-eigenvalue iteration. The majorants in the tables are ordinary floating-point evaluations, not verified enclosures; see Remark~\ref{rem:verified-matrix}. Rates are computed from the actual mesh diameters,
\begin{align*}
  \operatorname{rate}(Q_j)
  :=\frac{\log(Q_{j-1}/Q_j)}{\log(h_{j-1}/h_j)},
\end{align*}
not with the refinement parameter $n$ itself.

Figure~\ref{fig:mesh-families-2d} shows representative meshes from the two-dimensional families.  Boundary-corner diagonals are reversed when necessary so that every triangle has an interior vertex. The vertex set itself is unchanged.
\begin{figure}[htbp]
\centering
\includegraphics[width=\textwidth]{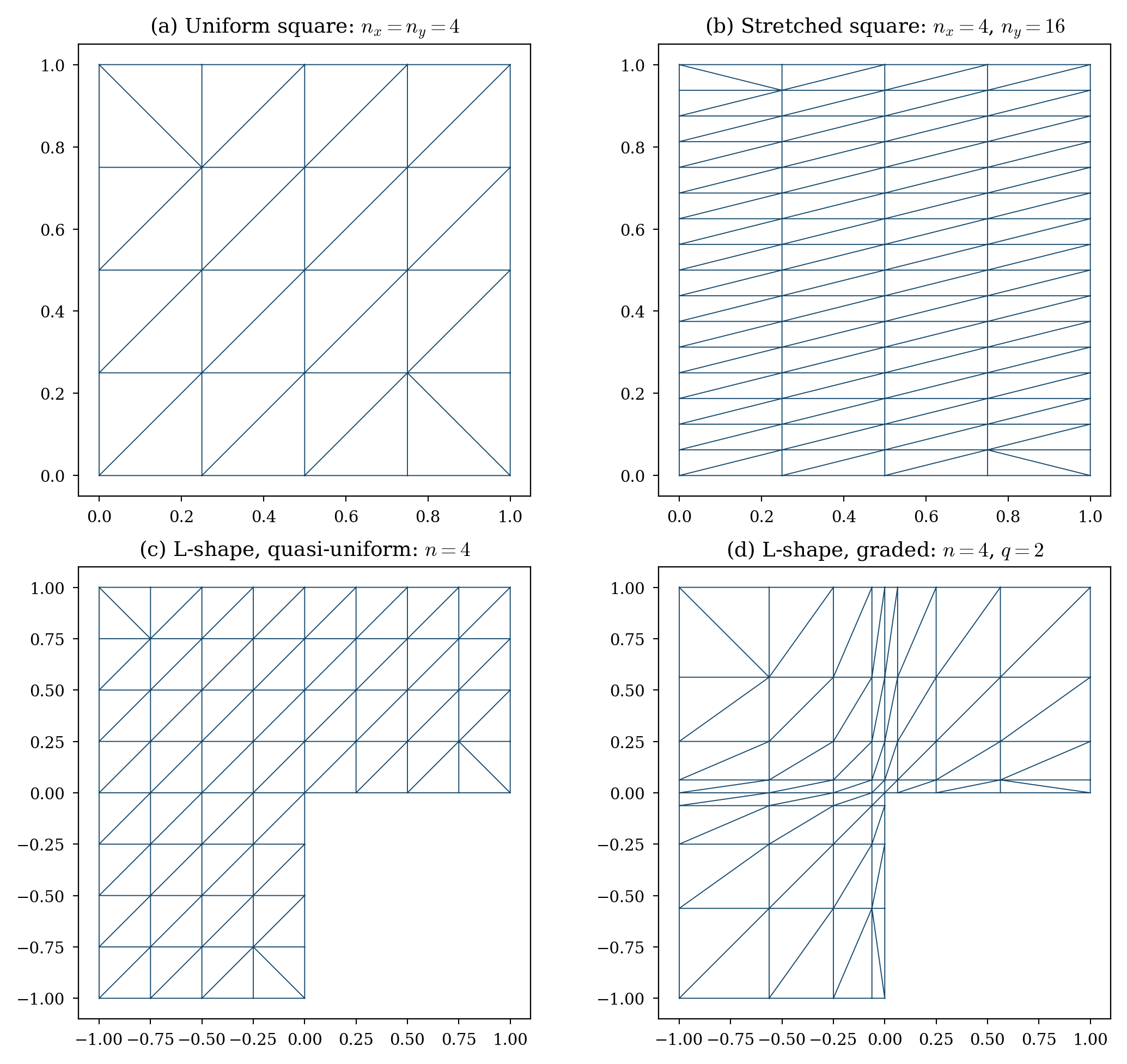}
\caption{Representative two-dimensional meshes: (a) a uniform triangular mesh of the unit square; (b) a vertically stretched mesh with $n_y=n_x^2$; (c) a quasi-uniform mesh of the L-shaped domain; and (d) an algebraically graded L-shaped mesh with $q=2$. Coarser levels from the same mesh families are shown for clarity. Boundary corner diagonals are reversed to avoid triangles whose three vertices lie on the boundary.}
\label{fig:mesh-families-2d}
\end{figure}

For the error test, we take $\Omega=(0,1)^2$ and
\begin{align}\label{eq:numerical-exact-solution}
  u(x,y)=\sin(\pi x)\sin(\pi y),
  \quad
  f(x,y)=2\pi^2\sin(\pi x)\sin(\pi y).
\end{align}
Both the conforming $\Pone$ solution and the CR solution use the unprojected right-hand side $f$. We use the fully explicit diameter-based quantities from Corollaries~\ref{cor:diameter-bound} and \ref{cor:CR-diameter}, namely
\begin{align}\label{eq:numerical-majorant}
  \widehat M_h
  =\sqrt{(h/\pi)^2+\kappa_{M,h}^2},
  \quad
  \widehat\eta^c_{L^2,h}
  :=\widehat M_h\widehat{\mathfrak E}_h(f),
  \quad
  \widehat\eta^{\CR}_{L^2,h}
  :=\widehat M_h^2\|f\|_{L^2(\Omega)}.
\end{align}
The corresponding effectivity indices are
\begin{align}\label{eq:numerical-effectivity}
  \operatorname{eff}_h^c
  &:=
  \frac{\widehat\eta^c_{L^2,h}}
       {\|u-u_h\|_{L^2(\Omega)}},
  \quad
  \operatorname{eff}_h^{\CR}
  :=
  \frac{\widehat\eta^{\CR}_{L^2,h}}
       {\|u-u_h^{\CR}\|_{L^2(\Omega)}}.
\end{align}
For all meshes in the tables, the first branch of \eqref{eq:CR-explicit}, $\widehat M_h^2 \|f \|_{L^2(\Omega)}$, gives the smaller value. All two-dimensional element means, Galerkin right-hand sides, and $L^2$ errors are computed using a tensor-product Gauss--Duffy quadrature rule with twelve Gauss points in each transformed coordinate. Repeating the smooth-square computations with twenty points in each direction leaves the displayed values unchanged. For the L-shaped tests, the singular vertex in the Duffy transformation is chosen as the local mesh vertex nearest the re-entrant corner.

\subsection{Smooth square: uniform and strongly stretched meshes}
For the uniform family the unit square is divided into an $n\times n$ Cartesian grid and triangulated as in Figure~\ref{fig:mesh-families-2d}. Table~\ref{tab:uniform-constants} shows $\gamma_{\rm geo,h}\simeq h$. The CR--conforming gap is the larger contribution, and the observed rate of $\kappa_{M,h}$ tends toward one.

\begin{table}[htbp]
\centering
\caption{Uniform meshes: decomposition of the computable constant.}
\label{tab:uniform-constants}
\small
\begin{tabular}{rrrrrrrr}
\toprule
$n$ & $N_T$ & $h$ & $\gamma_{\rm gap,h}$ &
$\gamma_{\rm geo,h}$ & $\kappa_{M,h}$ & $\widehat M_h$ &
$\kappa_{M,h}/h$\\
\midrule
16  & 512   & 0.088388 & 0.028041 & 0.010417 & 0.029913 & 0.041066 & 0.338432\\
32  & 2048  & 0.044194 & 0.014382 & 0.005208 & 0.015296 & 0.020781 & 0.346116\\
64  & 8192  & 0.022097 & 0.007279 & 0.002604 & 0.007731 & 0.010452 & 0.349873\\
128 & 32768 & 0.011049 & 0.003661 & 0.001302 & 0.003886 & 0.005241 & 0.351731\\
\bottomrule
\end{tabular}
\end{table}

The $L^2$ rates in Table~\ref{tab:uniform-error} approach two for both methods. The conforming majorant has the smaller effectivity index. For CR we use the operator bound $\widehat M_h^2\|f\|_{L^2(\Omega)}$.

\begin{table}[htbp]
\centering
\caption{Uniform meshes: conforming and CR $L^2$-errors and floating-point evaluations of the majorant formulas.}
\label{tab:uniform-error}
\small
\begin{tabular}{crrrrrr}
\toprule
method & $n$ & $L^2$-error & error rate & majorant & majorant rate & effectivity\\
\midrule
$\Pone$ & 16  & 5.374693e-3 & --    & 1.284358e-2 & --    & 2.390\\
$\Pone$ & 32  & 1.350385e-3 & 1.993 & 3.230069e-3 & 1.991 & 2.392\\
$\Pone$ & 64  & 3.379914e-4 & 1.998 & 8.092913e-4 & 1.997 & 2.394\\
$\Pone$ & 128 & 8.452208e-5 & 2.000 & 2.025051e-4 & 1.999 & 2.396\\
\midrule
$\CR$ & 16  & 1.941345e-3 & --    & 1.664398e-2 & --    & 8.573\\
$\CR$ & 32  & 4.861153e-4 & 1.998 & 4.262391e-3 & 1.965 & 8.768\\
$\CR$ & 64  & 1.215742e-4 & 1.999 & 1.078199e-3 & 1.983 & 8.869\\
$\CR$ & 128 & 3.039633e-5 & 2.000 & 2.711202e-4 & 1.992 & 8.920\\
\bottomrule
\end{tabular}
\end{table}

\paragraph{Strongly stretched meshes.}
To increase the aspect ratio on the same domain, we use $n$ subdivisions in $x$ and $n^2$ in $y$, with the same corner-cell modification. Thus,
\begin{align*}
  h_x=n^{-1},\quad h_y=n^{-2},\quad h_x/h_y=n,
\end{align*}
while the maximum diameter still tends to zero. Over the levels in Table~\ref{tab:stretched-constants}, $\kappa_{M,h}/h$ remains bounded. Again the geometric term is proportional to $h$, while the CR--conforming gap is larger.

\begin{table}[htbp]
\centering
\caption{Strongly stretched meshes: spectral decomposition.  Here
$r=h_x/h_y=n$.}
\label{tab:stretched-constants}
\small
\begin{tabular}{rrrrrrrrr}
\toprule
$n$ & $r$ & $N_T$ & $h$ & $\gamma_{\rm gap,h}$ &
$\gamma_{\rm geo,h}$ & $\kappa_{M,h}$ & $\widehat M_h$ &
$\kappa_{M,h}/h$\\
\midrule
2  & 2  & 16   & 0.559017 & 0.131023 & 0.065881 & 0.146654 & 0.230587 & 0.262342\\
4  & 4  & 128  & 0.257694 & 0.077965 & 0.030370 & 0.083671 & 0.117172 & 0.324693\\
8  & 8  & 1024 & 0.125973 & 0.040960 & 0.014846 & 0.043567 & 0.059211 & 0.345848\\
16 & 16 & 8192 & 0.062622 & 0.020744 & 0.007380 & 0.022018 & 0.029700 & 0.351595\\
\bottomrule
\end{tabular}
\end{table}

The $L^2$ rates in Table~\ref{tab:stretched-error} are close to two. These right-triangle meshes remain uniformly semi-regular even though their aspect ratios increase, so they fall under the smooth comparison in Subsection~\ref{subsec:directional-anisotropic-comparison}. This test is not meant as a rate statement for arbitrary anisotropic families.

\begin{table}[htbp]
\centering
\caption{Strongly stretched meshes: conforming and CR $L^2$-errors and
floating-point evaluations of the majorant formulas.}
\label{tab:stretched-error}
\small
\begin{tabular}{crrrrrr}
\toprule
method & $n$ & $L^2$-error & error rate & majorant & majorant rate & effectivity\\
\midrule
$\Pone$ & 2  & 1.486889e-1 & --    & 4.733376e-1 & --    & 3.183\\
$\Pone$ & 4  & 4.191689e-2 & 1.635 & 1.129733e-1 & 1.850 & 2.695\\
$\Pone$ & 8  & 1.018888e-2 & 1.976 & 2.753151e-2 & 1.973 & 2.702\\
$\Pone$ & 16 & 2.522934e-3 & 1.997 & 6.717866e-3 & 2.018 & 2.663\\
\midrule
$\CR$ & 2  & 6.152776e-2 & --    & 5.247682e-1 & --    & 8.529\\
$\CR$ & 4  & 1.265384e-2 & 2.042 & 1.355024e-1 & 1.748 & 10.708\\
$\CR$ & 8  & 2.859204e-3 & 2.078 & 3.460284e-2 & 1.907 & 12.102\\
$\CR$ & 16 & 6.946319e-4 & 2.024 & 8.706017e-3 & 1.974 & 12.533\\
\bottomrule
\end{tabular}
\end{table}

\subsection{L-shaped domain: quasi-uniform and graded meshes}
\label{subsec:lshape}\label{subsec:lshape-graded}
We consider the standard L-shaped polygon
\begin{align*}
  \Omega=(-1,1)^2\setminus\bigl([0,1]\times[-1,0]\bigr),
\end{align*}
whose re-entrant angle at the origin is $3\pi/2$.  With polar coordinates $(r,\theta)$, $0<\theta<3\pi/2$, we set
\begin{align*}
  \alpha:=\frac{2}{3},\quad
  s(r,\theta):=r^\alpha\sin(\alpha\theta),\quad
  \chi(x,y):=(1-x^2)(1-y^2),
\end{align*}
and choose the manufactured solution
\begin{align}\label{eq:lshape-solution}
  u(x,y)=\chi(x,y)s(r,\theta),
  \quad
  f=-\varDelta u
     =-(\varDelta\chi)s-2\nabla\chi\cdot\nabla s.
\end{align}
Because $s$ is harmonic and $\nabla\chi=O(r)$ at the origin, $f=O(r^\alpha)\in L^2(\Omega)$.  In contrast, $\chi=1+O(r^2)$, $\nabla\chi=O(r)$, and $D^2\chi=O(1)$ imply
\begin{align*}
  D^2u=D^2s+O(r^\alpha).
\end{align*}
Furthermore, with $|\cdot|_{\rm F}$ denoting the Frobenius norm,
\begin{align*}
  |D^2s|_{\rm F}^2
  =2\alpha^2(1-\alpha)^2r^{2\alpha-4}.
\end{align*}
Consequently, $|D^2u|_{\rm F}\ge c r^{\alpha-2}$ for all sufficiently small $r>0$, and
\begin{align*}
  \int_{\Omega\cap B_\varepsilon(0)}|D^2u|_{\rm F}^2\,dx
  \ge c\int_0^\varepsilon r^{2\alpha-4}r\,dr
  =c\int_0^\varepsilon r^{-5/3}\,dr=\infty.
\end{align*}
Therefore,
\begin{align*}
  u\in H_0^1(\Omega),\quad -\varDelta u=f\in L^2(\Omega),\quad
  u\notin H^2(\Omega),
\end{align*}
which is the graph-space setting of Theorem~\ref{thm:L2}. A tensor Gauss--Duffy rule of order twelve is used; increasing it to twenty does not change the observed convergence behaviour.

\paragraph{Quasi-uniform meshes.}
The quasi-uniform family is obtained from the grid points $x_j=y_j=j/n$, $-n\le j\le n$, by deleting the lower-right quadrant and triangulating the retained squares as above. In Table~\ref{tab:lshape-quasi-constants}, the observed rate of $\kappa_{M,h}$ moves toward $\alpha=2/3$. Because the geometric term is still first order, the slower decay is due to the CR--conforming gap.

\begin{table}[htbp]
\centering
\caption{L-shaped domain, quasi-uniform meshes: spectral decomposition.}
\label{tab:lshape-quasi-constants}
\small
\begin{tabular}{rrrrrrrr}
\toprule
$n$ & $N_T$ & $h$ & $\gamma_{\rm gap,h}$ &
$\gamma_{\rm geo,h}$ & $\kappa_{M,h}$ & observed rate & $\widehat M_h$\\
\midrule
16  & 1536  & 0.088388 & 0.052799 & 0.010417 & 0.053817 & --    & 0.060728\\
32  & 6144  & 0.044194 & 0.032834 & 0.005208 & 0.033244 & 0.695 & 0.036098\\
64  & 24576 & 0.022097 & 0.020534 & 0.002604 & 0.020699 & 0.684 & 0.021861\\
128 & 98304 & 0.011049 & 0.012885 & 0.001302 & 0.012951 & 0.676 & 0.013420\\
\bottomrule
\end{tabular}
\end{table}

The rates in Table~\ref{tab:lshape-quasi-error} are consistent with the corner-limited value $2\alpha=4/3$. No global $H^2$-regularity constant is used.

\begin{table}[htbp]
\centering
\caption{L-shaped domain, quasi-uniform meshes: conforming and CR $L^2$-errors
and floating-point evaluations of the majorant formulas.}
\label{tab:lshape-quasi-error}
\small
\begin{tabular}{crrrrrr}
\toprule
method & $n$ & $L^2$-error & error rate & majorant & majorant rate & effectivity\\
\midrule
$\Pone$ & 16  & 3.989456e-3 & --    & 1.515088e-2 & --    & 3.798\\
$\Pone$ & 32  & 1.392131e-3 & 1.519 & 5.529142e-3 & 1.454 & 3.972\\
$\Pone$ & 64  & 5.116858e-4 & 1.444 & 2.079540e-3 & 1.411 & 4.064\\
$\Pone$ & 128 & 1.944282e-4 & 1.396 & 7.979249e-4 & 1.382 & 4.104\\
\midrule
$\CR$ & 16  & 3.542365e-3 & --    & 1.691940e-2 & --    & 4.776\\
$\CR$ & 32  & 1.297884e-3 & 1.449 & 5.978367e-3 & 1.501 & 4.606\\
$\CR$ & 64  & 4.924694e-4 & 1.398 & 2.192570e-3 & 1.447 & 4.452\\
$\CR$ & 128 & 1.906033e-4 & 1.369 & 8.262541e-4 & 1.408 & 4.335\\
\bottomrule
\end{tabular}
\end{table}

\paragraph{Corner-graded meshes.}
To test recovery by grading, let
$\xi_j=j/n$, $-n\le j\le n$, and map the logical coordinates by
\begin{align}\label{eq:grading-map}
  x_j=\operatorname{sign}(\xi_j)|\xi_j|^2,
  \quad
  y_j=\operatorname{sign}(\xi_j)|\xi_j|^2.
\end{align}
After deleting the lower-right quadrant, the retained rectangles are triangulated as above.  Triangles near the coordinate axes become strongly anisotropic, measured by
\begin{align*}
  \mathrm{AR}_h:=\max_{T\in\mathbb T_h}\frac{h_T}{a_{T,\min}},
\end{align*}
where $a_{T,\min}$ is the minimum altitude of $T$.

Here, $q=2>1/\alpha=3/2$. Theorem~\ref{thm:graded-Lshape} gives $\kappa_{M,h}=O(h)$. The observed rates in Table~\ref{tab:lshape-graded-constants} are close to one, while $\mathrm{AR}_h$ increases from about $31$ to $255$.

\begin{table}[htbp]
\centering
\caption{L-shaped domain, graded anisotropic meshes: spectral decomposition.}
\label{tab:lshape-graded-constants}
\small
\begin{tabular}{rrrrrrrrr}
\toprule
$n$ & $N_T$ & $h$ & $\mathrm{AR}_h$ & $\gamma_{\rm gap,h}$ &
$\gamma_{\rm geo,h}$ & $\kappa_{M,h}$ & observed rate & $\widehat M_h$\\
\midrule
16  & 1536  & 0.171252 & 31.03  & 0.044779 & 0.020182 & 0.048370 & --    & 0.072877\\
32  & 6144  & 0.087007 & 63.02  & 0.024087 & 0.010254 & 0.025866 & 0.924 & 0.037895\\
64  & 24576 & 0.043849 & 127.01 & 0.012784 & 0.005168 & 0.013677 & 0.930 & 0.019542\\
128 & 98304 & 0.022011 & 255.00 & 0.006666 & 0.002594 & 0.007113 & 0.949 & 0.009984\\
\bottomrule
\end{tabular}
\end{table}

The $L^2$ rates in Table~\ref{tab:lshape-graded-error} are close to two for both methods. The majorant rates are of the same order.

\begin{table}[htbp]
\centering
\caption{L-shaped domain, graded anisotropic meshes: conforming and CR
$L^2$-errors and floating-point evaluations of the majorant formulas.}
\label{tab:lshape-graded-error}
\small
\begin{tabular}{crrrrrr}
\toprule
method & $n$ & $L^2$-error & error rate & majorant & majorant rate & effectivity\\
\midrule
$\Pone$ & 16  & 5.937344e-3 & --    & 1.669951e-2 & --    & 2.813\\
$\Pone$ & 32  & 1.513191e-3 & 2.019 & 4.565531e-3 & 1.915 & 3.017\\
$\Pone$ & 64  & 3.801758e-4 & 2.016 & 1.235124e-3 & 1.908 & 3.249\\
$\Pone$ & 128 & 9.519684e-5 & 2.009 & 3.269425e-4 & 1.928 & 3.434\\
\midrule
$\CR$ & 16  & 3.716674e-3 & --    & 2.436678e-2 & --    & 6.556\\
$\CR$ & 32  & 9.395636e-4 & 2.031 & 6.588487e-3 & 1.931 & 7.012\\
$\CR$ & 64  & 2.357421e-4 & 2.018 & 1.752040e-3 & 1.933 & 7.432\\
$\CR$ & 128 & 5.902421e-5 & 2.009 & 4.573198e-4 & 1.949 & 7.748\\
\bottomrule
\end{tabular}
\end{table}

\FloatBarrier
\subsection{Three-dimensional tetrahedral meshes}\label{subsec:3d-numerics}
The three-dimensional test uses the unit cube
$\Omega=(0,1)^3$ and
\begin{align}\label{eq:cube-solution}
  u(x,y,z)=\sin(\pi x)\sin(\pi y)\sin(\pi z),
  \quad
  f=3\pi^2u.
\end{align}
Cartesian boxes are divided into six face-compatible Freudenthal tetrahedra. For the box indexed by $(i,j,k)$, we set
\begin{align*}
b_z:=\mathbf 1_{\{k=n_z\}}, \quad b_x:=\mathbf 1_{\{i=n_x\}}\oplus b_z, \quad b_y:=\mathbf 1_{\{j=n_y\}}\oplus b_z,
\end{align*}
where $\oplus$ denotes addition modulo two. The standard Freudenthal split is reflected in the local $x$- and $y$-midplanes according to $b_x$ and $b_y$. This preserves face compatibility and gives each tetrahedron an interior vertex without adding mesh vertices. We use the CR face basis $\psi_i=1-3\lambda_i$ and $d^{-2}=1/9$ in \eqref{eq:defect-decomp}. Smooth integrals are computed with a sixth-order tensor Gauss--Duffy rule; increasing the order to eight does not change the observed convergence rates. We compare the uniform family $n_x=n_y=n_z=n$ with the stretched family
\begin{align*}
  n_x=n_y=n,\quad n_z=n^2,
\end{align*}
so that the aspect ratio grows with $n$ while $h=O(n^{-1})$.  We use $\mathrm{AR}_h=\max_T h_T/a_{T,\min}$ and show representative boundary triangulations in Figure~\ref{fig:mesh-families-3d}.

\begin{figure}[htbp]
\centering
\includegraphics[width=\textwidth]{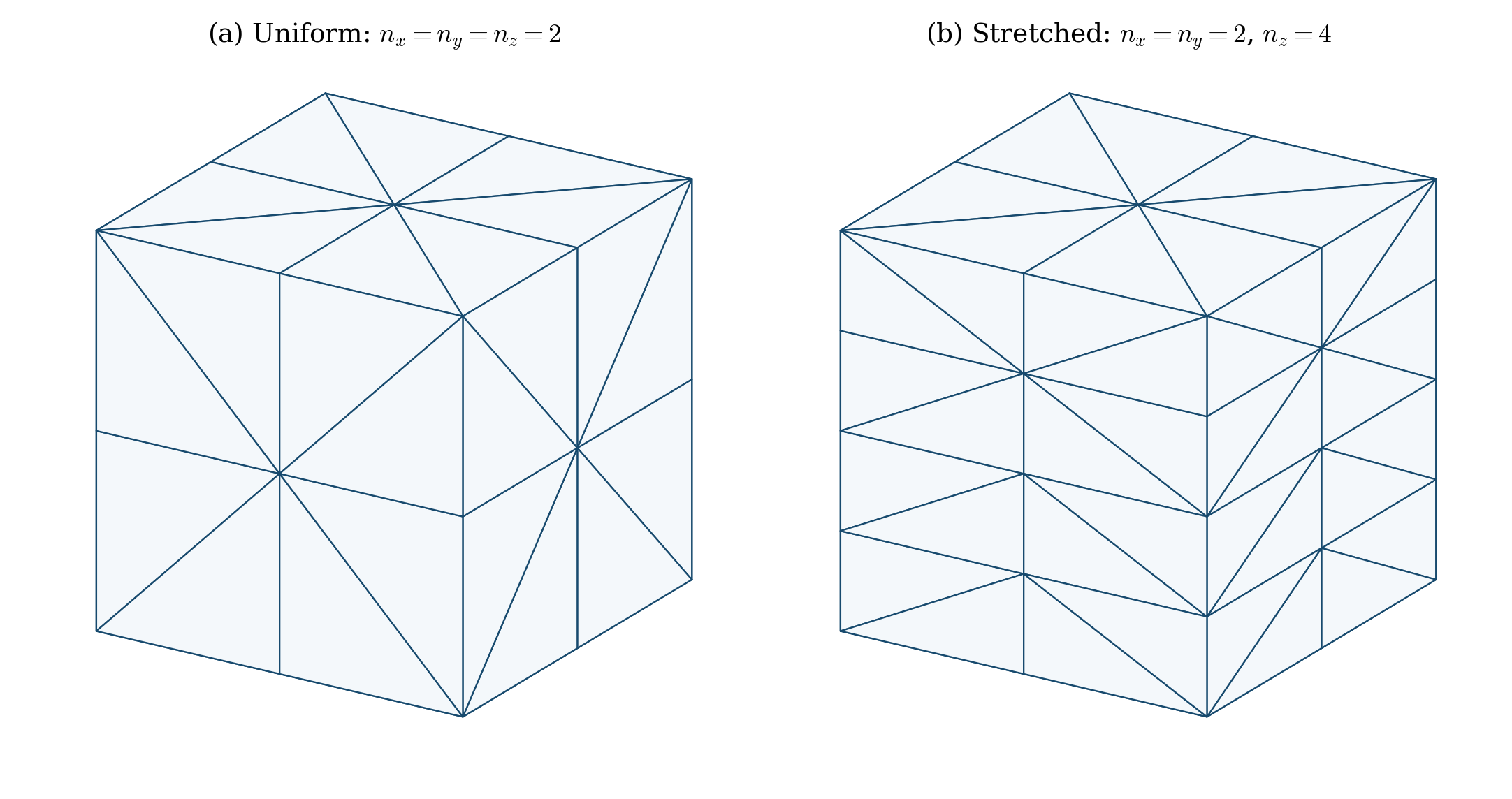}
\caption{Boundary triangulations of representative tetrahedral meshes of the unit cube: (a) the uniform family with $n_x=n_y=n_z=2$ and (b) the stretched family with $n_x=n_y=2$ and $n_z=4=n_x^2$.  Each Cartesian cell is subdivided into six Freudenthal tetrahedra using the face-compatible body-diagonal rule described in the text.  No additional vertices are introduced, and every tetrahedron has at least one interior vertex.}
\label{fig:mesh-families-3d}
\end{figure}

For both three-dimensional families, Table~\ref{tab:3d-constants} shows an almost constant ratio $\gamma_{\rm geo,h}/h$. The CR--conforming gap is again the larger term.

\begin{table}[htbp]
\centering
\caption{Three-dimensional tetrahedral meshes: spectral decomposition.  The
uniform family has $n_z=n$, whereas the stretched family has $n_z=n^2$.}
\label{tab:3d-constants}
\small
\begin{tabular}{crrrrrrr}
\toprule
family & $n$ & $N_T$ & $h$ & $\mathrm{AR}_h$ &
$\gamma_{\rm gap,h}$ & $\gamma_{\rm geo,h}$ & $\kappa_{M,h}$\\
\midrule
uniform   & 2 & 48   & 0.866025 & 2.45 & 0.128116 & 0.058926 & 0.141017\\
uniform   & 4 & 384  & 0.433013 & 2.45 & 0.099391 & 0.029463 & 0.103666\\
uniform   & 6 & 1296 & 0.288675 & 2.45 & 0.074027 & 0.019642 & 0.076588\\
uniform   & 8 & 3072 & 0.216506 & 2.45 & 0.059704 & 0.014731 & 0.061495\\
uniform   & 12 & 10368 & 0.144338 & 2.45 & 0.042783 & 0.009821 & 0.043896\\
\midrule
stretched & 2 & 96   & 0.750000 & 3.35 & 0.127113 & 0.051875 & 0.136987\\
stretched & 4 & 1536 & 0.359035 & 5.92 & 0.087649 & 0.024978 & 0.091005\\
stretched & 6 & 7776 & 0.237333 & 8.66 & 0.064877 & 0.016531 & 0.066871\\
stretched & 8 & 24576 & 0.177466 & 11.45 & 0.051773 & 0.012366 & 0.053175\\
stretched & 12 & 124416 & 0.118056 & 17.06 & 0.036697 & 0.008229 & 0.037574\\
\bottomrule
\end{tabular}
\end{table}

The conforming $\Pone$ error in Table~\ref{tab:3d-error} is still pre-asymptotic on the coarse levels, but its rate approaches two on the finer meshes in both families. The CR rates settle near two earlier. These computations check the scalar decomposition and the majorant on tetrahedral meshes; no three-dimensional anisotropic rate theorem is claimed.

\begin{table}[htbp]
\centering
\caption{Three-dimensional tetrahedral meshes: conforming and CR $L^2$-errors
and floating-point evaluations of the majorant formulas.}
\label{tab:3d-error}
\small
\begin{tabular}{ccrrrrrr}
\toprule
family & method & $n$ & $L^2$-error & error rate & majorant & majorant rate & effectivity\\
\midrule
uniform & $\Pone$ & 2 & 1.148097e-1 & --    & 9.444788e-1 & --    & 8.226\\
uniform & $\Pone$ & 4 & 8.180157e-2 & 0.489 & 2.506529e-1 & 1.914 & 3.064\\
uniform & $\Pone$ & 6 & 4.149342e-2 & 1.674 & 1.155947e-1 & 1.909 & 2.786\\
uniform & $\Pone$ & 8 & 2.435139e-2 & 1.853 & 6.812109e-2 & 1.838 & 2.797\\
uniform & $\Pone$ & 12 & 1.114433e-2 & 1.928 & 3.189735e-2 & 1.871 & 2.862\\
\addlinespace
uniform & $\CR$ & 2 & 1.310160e-1 & --    & 1.003666e+0 & --    & 7.661\\
uniform & $\CR$ & 4 & 2.951418e-2 & 2.150 & 3.113736e-1 & 1.689 & 10.550\\
uniform & $\CR$ & 6 & 1.321750e-2 & 1.981 & 1.497930e-1 & 1.805 & 11.333\\
uniform & $\CR$ & 8 & 7.477585e-3 & 1.980 & 8.930550e-2 & 1.798 & 11.943\\
uniform & $\CR$ & 12 & 3.340442e-3 & 1.987 & 4.226759e-2 & 1.845 & 12.653\\
\midrule
stretched & $\Pone$ & 2  & 1.298679e-1 & --    & 7.085632e-1 & --    & 5.456\\
stretched & $\Pone$ & 4  & 5.985543e-2 & 1.051 & 1.761123e-1 & 1.890 & 2.942\\
stretched & $\Pone$ & 6  & 2.850373e-2 & 1.792 & 8.194374e-2 & 1.848 & 2.875\\
stretched & $\Pone$ & 8  & 1.636391e-2 & 1.909 & 4.811709e-2 & 1.832 & 2.940\\
stretched & $\Pone$ & 12 & 7.371805e-3 & 1.956 & 2.242286e-2 & 1.873 & 3.042\\
\addlinespace
stretched & $\CR$ & 2  & 8.887260e-2 & --    & 7.930636e-1 & --    & 8.924\\
stretched & $\CR$ & 4  & 1.913691e-2 & 2.085 & 2.234236e-1 & 1.720 & 11.675\\
stretched & $\CR$ & 6  & 8.495271e-3 & 1.962 & 1.065552e-1 & 1.789 & 12.543\\
stretched & $\CR$ & 8  & 4.778049e-3 & 1.980 & 6.300409e-2 & 1.808 & 13.186\\
stretched & $\CR$ & 12 & 2.122737e-3 & 1.990 & 2.956171e-2 & 1.856 & 13.926\\
\bottomrule
\end{tabular}
\end{table}

In an additional implementation check with a random piecewise constant right-hand side, the residuals in the interior normal-flux continuity condition and in \eqref{eq:defect-decomp} were at roundoff level on representative uniform and stretched meshes.

\section{Concluding remarks}\label{sec:conclusion}
For the lowest-order CR--RT pair, the Marini relation reduces the equilibrated RT reconstruction to a scalar CR solve. The defect splits exactly into the CR--conforming energy gap and an explicit geometric term, and $\kappa_{M,h}^2$ is obtained from a scalar generalised eigenvalue problem. Together with the local Poincar\'e constant, this gives conforming $\Pone$ and CR error bounds without global $H^2$-regularity. On the graded L-shaped family, $q>3/2$ gives $\kappa_{M,h}=O(h)$ and therefore computable $O(h^2)$ $L^2$-operator bounds.

Sharp rates for graded tetrahedral meshes with corner and edge singularities are still open. The numerical eigenvalue computations here use standard floating-point arithmetic; certified majorants would require verified enclosures.

\section*{Data availability statement}
No external datasets were used.  All numerical data presented in this article
are contained in the manuscript.

\end{document}